\documentclass[11pt,reqno,english]{extarticle}
\PassOptionsToPackage{backgroundcolor=green!40}{todonotes}
\usepackage[tbtags]{amsmath}
\usepackage{amsthm}
\usepackage{amssymb}
\usepackage{mathtools}
\usepackage{textcomp}
\usepackage[utf8]{inputenc}
\usepackage{csquotes}
\setcounter{secnumdepth}{4}
\setcounter{tocdepth}{4}
\usepackage{xcolor}
\usepackage{babel}
\usepackage{verbatim}
\usepackage{mathrsfs}
\usepackage{mathtools}
\usepackage{url}
\urlstyle{same}
\usepackage{geometry}
\geometry{verbose,tmargin=1in,bmargin=1in,lmargin=1in,rmargin=1in}
\usepackage[pdfusetitle,
 bookmarks=true,bookmarksnumbered=false,bookmarksopen=false,
 breaklinks=true,pdfborder={0 0 0},pdfborderstyle={},backref=false,colorlinks=true]
 {hyperref}
\hypersetup{
 urlcolor=blue,linkcolor=blue,citecolor=blue}
\usepackage[style=trad-plain,url=false,isbn=false]{biblatex}
\addbibresource{references.bib}
\usepackage[shortlabels]{enumitem}
\usepackage{aligned-overset}
\usepackage{cancel}
\newtheorem{theorem}{Theorem}[section]
\newtheorem{lemma}[theorem]{Lemma}
\newtheorem{proposition}[theorem]{Proposition}
\newtheorem{corollary}[theorem]{Corollary}
\theoremstyle{definition}
\newtheorem{definition}[theorem]{Definition}
\theoremstyle{remark}
\newtheorem{remark}[theorem]{Remark}
\numberwithin{equation}{section}

\usepackage{mleftright}
\mleftright
\newcommand{\email}[1]{{\href{mailto:#1}{\nolinkurl{#1}}}}

\DeclareMathOperator{\Lip}{Lip}

\DeclareMathOperator{\Cov}{Cov}
\newcommand{\eps}{\varepsilon}

\title{McKean--Vlasov limits of scaling-critical reaction-diffusion equations with random initial data}

\author{Bryan Castillo\thanks{Department of Mathematics, Duke University, Durham, NC, 27708, USA. \emph{Email addresses:} \email{bryan.castillo@duke.edu}, \email{alexander.dunlap@duke.edu}}\and Alexander Dunlap\footnotemark[1]}

\begin{document}
\maketitle
\begin{abstract}
  We study a large class of scaling-critical reaction-diffusion equations in two spatial dimensions, where the initial data is white noise mollified at scale $\eps^2$ and the reaction term is attenuated by a factor of $(\log\eps^{-1})^{-1}$. We show that as $\eps\to 0$, the solution converges to the solution of a McKean--Vlasov equation, which is Gaussian with standard deviation given by the solution to an ODE. Our result covers the case of the reaction term $f(u)=u^3$, and thus gives a new proof of the limiting behavior for the Allen--Cahn equation discovered in the recent work of Gabriel, Rosati, and Zygouras (\emph{Probab.\ Theory Related Fields}~192: 1373--1446, 2025).
\end{abstract}

\tableofcontents
	\section{Introduction}
    This paper is concerned with the study of a scaling-critical reaction-diffusion equation with white noise (random) initial data. The idealized equation of interest is for a scalar-valued function $u = u(t,x)$, for $t\in \mathbb{R}_{\geq 0},x\in \mathbb{R}^2$:
    \begin{equation}
    \label{eq:archetype}
        \begin{cases}
            \partial_t u = \frac{1}{2}\Delta u + \mathfrak{m}u - t^{-3/2}F(t^{1/2}u);\\
            u(0,x) = \eta(x),
        \end{cases}
    \end{equation}
    where $\eta$ is a spatial white noise on $\mathbb{R}^2$ (defined with respect to a probability space $(\Omega,\mathcal{F},\mathbb{P})$) and $F$ is the reaction term. A special case of great importance is when $F(u)=\lambda^2u^3$, in which case (\ref{eq:Allen-Cahn-archetype}) becomes the Allen--Cahn equation with white noise initial data:
    \begin{equation}
    \label{eq:Allen-Cahn-archetype}
      \begin{cases}
          \partial_t u=\frac12\Delta u+\mathfrak{m} u - \lambda^2 u^3          ;\\
          u(0,x)=\eta(x).
      \end{cases}
    \end{equation}
    Most notably, for $\mathfrak{m}>0$, the problem (\ref{eq:Allen-Cahn-archetype}) is connected to the study of mean-curvature flow, where many conjectured properties of the coarsening of sets are expected to hold for suitable random initial conditions \cites{BRAY199341,PhysRevLett.49.1223,MR4603398,article}. A related problem, which has been the subject of significant interest in the mathematical physics literature, is when instead of a random initial condition, the equation (\ref{eq:Allen-Cahn-archetype}) is augmented with additive space-time white noise forcing. Then, the (at least formal) invariant measure is the $\Phi^4$ field. We do not attempt to survey the vast literature on this topic, but see for example \cites{parisi:wu:1981:perturbation,MR3693966,MR2928722,MR4276286,duch2025ergodicityinfinitevolumephi43}.
    
    The term ``scaling-critical'' is most properly applied to the equation (\ref{eq:archetype}) with $\mathfrak{m} = 0$. In this case, if $u$ solves (\ref{eq:archetype}) with $\mathfrak{m}=0$ and we define $u^\theta(t,x) = \theta u(\theta^2 t,\theta x)$, then $u^\theta$ also solves (\ref{eq:archetype}). Scaling-criticality represents both an opportunity and a challenge in the study of stochastic systems -- an opportunity because of the possibilities it yields for multiscale analysis, but a challenge because typically scale-invariant systems are often not well-posed.
    Indeed, the Hölder--Besov regularity of $\eta$ is just less than $-1$, and it has been shown in \cite{chevyrev2024} that solutions to (\ref{eq:Allen-Cahn-archetype}) with initial data in $C^{-\alpha}$, $\alpha<-2/3$, may blow up at arbitrarily small times. 

On the other hand, Gabriel, Rosati, and Zygouras in \cite{GRZ24} were able to obtain interesting limits of the system (\ref{eq:Allen-Cahn-archetype}) when $\eta$ is replaced by a scale-$\varepsilon$ mollification $\eta_\varepsilon$ and $\lambda$ is replaced by a logarithmically attenuated coupling constant $\lambda_\varepsilon\coloneqq \lambda/\sqrt{\log \eps^{-1}}$:
\begin{equation}
   \label{eq:Allen-Cahn-weak}
      \begin{cases}
          \partial_t u_\varepsilon=\frac12\Delta u_\varepsilon+\mathfrak{m} u_\varepsilon - \frac{\lambda^2}{\log\eps^{-1}} u_\varepsilon^3          ;\\
          u_\varepsilon(0,x)=\eta_\varepsilon(x)\coloneqq G_{\varepsilon^2}*\eta(x),
      \end{cases}
    \end{equation}
    and then $\eps$ is taken to $0$. Here and henceforth, we use the notation
    \[
      G_t(x) = \frac1{2\pi t}e^{-|x|^2/(2t)}
  \] for the standard heat kernel.
    The mollification and logarithmic attenuation performed in (\ref{eq:Allen-Cahn-weak}) is similar to that performed in recent work on critical SPDE, in particular the stochastic heat equation (and equivalently directed polymers) \cites{MR1629198,MR3719953,MR4666296,MR4602000,dunlap20232dnonlinearstochasticheat,MR4413215,MR4294278,tsai2025stochasticheatflowmoments,clark2025conditionalgmcstochasticheat,gu2025stochasticheatflowblack}, the isotropic and anisotropic KPZ equation \cites{MR4089501,MR4112709,MR4058958,MR4203334,MR4679957}, and the Burgers equation \cite{MR4719971}, all considered in two spatial dimensions. In \cite{GRZ24}, the authors showed that as $\eps\to 0$, the solutions to (\ref{eq:Allen-Cahn-weak}) converge in the mean-square sense to solutions of a heat equation with initial data $\eta$ and a constant potential given by the solution to an explicit ODE. In fact, the authors of \cite{GRZ24} point out that the limiting problem can be cast as a McKean--Vlasov equation \cite[(1.7)]{GRZ24} involving a projection of the nonlinearity onto the first Wiener chaos; see (\ref{eq:McK-V}) below. However, the McKean--Vlasov equation plays little role in the analysis in \cite{GRZ24}, which works via an analysis of the Wild expansion of solutions to (\ref{eq:Allen-Cahn-weak}). Indeed, \cite{GRZ24} poses the question of obtaining ``a deeper understanding of the relations'' between the McKean--Vlasov equation and the original PDE (\ref{eq:Allen-Cahn-weak}). We will provide a more precise description of the results of \cite{GRZ24} after stating our main results below.
    
    In the present paper, we provide a new perspective on the work \cite{GRZ24} by generalizing their results using a completely different proof strategy. Indeed, our proof uses the McKean--Vlasov equation in a central way, replacing the study of Wild expansions in \cite{GRZ24}. Since we do not rely at all on Wild expansions, our major generalization is that we can handle more general (not necessarily polynomial) nonlinearities under certain growth and regularity conditions; see Definition~\ref{def:S} below. We study the similarly mollified and logarithmically-attenuated version of (\ref{eq:archetype}), namely
\begin{equation}
    \label{eq:u-Theta-eqn}
      \begin{cases}
          \partial_t u_\eps=\frac12\Delta u_\eps+\mathfrak{m} u_\eps - \frac{F((t+\eps^2)^{1/2}u_\eps)}{(t+\eps^2)^{3/2}\log\eps^{-1}};\\
          u_\eps(0,x)=\eta_\eps(x).
      \end{cases}
    \end{equation}
In fact, for our first main theorem, we can be even more general, and consider the problem
\begin{equation}
    \label{eq:u-f-eqn}
      \begin{cases}
      \partial_t u_\eps=\frac12\Delta u_\eps+\mathfrak{m} u_\eps - \frac{f(t+\eps^2,u_\eps)}{\log\eps^{-1}};\\
          u_\eps(0,x)=\eta_\eps(x)
      \end{cases}
    \end{equation}
  for a space-time nonlinearity $f=f(t,u)$ satisfying conditions that match the scaling in (\ref{eq:u-Theta-eqn}). Of course, (\ref{eq:u-Theta-eqn}) is recovered from (\ref{eq:u-f-eqn}) by taking
  \begin{equation}\label{eq:f-Theta-self-similar}
    f(t,u) = t^{-3/2}F(t^{1/2}u),
  \end{equation}
and the reader should keep this case in mind throughout even when this particular form of $f$ is not assumed. In the sequel, by $f'$ we will always mean the derivative of $f$ with respect to the second argument, i.e.
\[
  f'(t,u) = \partial_u f(t,u).
\]

The following definition encodes the conditions that we will need to impose on $f$. Because our primary focus is reaction terms of the form (\ref{eq:f-Theta-self-similar}), we will write these conditions in terms of the rescaled function $F$ defined so that
\begin{equation}
\label{eq:f-Theta-general}
	f(t,u) =: t^{-3/2}F(t,t^{1/2}u).
\end{equation}
The self-similar case (\ref{eq:f-Theta-self-similar}) is recovered whenever the function $F(t,u)$ is independent of $t$.
\begin{definition}
  \begin{enumerate}
    \item For $L_1,L_2\geq 0$, we say that $F\in S_1(L_1,L_2)$ if $F$ is continuous, and for each fixed $t>0$, $F(t,\cdot)$ is an odd, differentiable function with Lipschitz derivative such that 
  \begin{equation}
  \label{S1Bd-F}
    \Lip(F(t,\cdot))\le L_1\qquad\text{and}\qquad \Lip(F'(t,\cdot))\le L_2.
  \end{equation}
  We define
  \begin{equation}
      S_1:= \bigcup_{L_1,L_2\in [0,\infty)}S_1(L_1,L_2).
  \end{equation}
  \item For $\gamma_1\ge2$, $\gamma_2\ge 1$, and $\ell_1,l_2\ge 0$, we say that $F\in S_2(\gamma_1,\gamma_2,\ell_1,\ell_2)$ if $F$ is continuous and for each fixed $t>0$, $F(t,\cdot)$ is an odd, differentiable, \emph{increasing} function with locally Lipschitz derivative such that for all $u\ge 0$,
  \begin{equation}
  \label{S2Bd-F}
    \Lip(F(t,\cdot)|_{[0,u]})\le \ell_1(1+|u|^{\gamma_1})\qquad\text{and}\qquad \Lip(F'(t,\cdot)|_{[0,u]})\le \ell_2 (1+|u|^{\gamma_2}).
  \end{equation}
  We define
  \begin{equation}
    S_2 \coloneqq \bigcup_{\substack{\gamma_1\in [2,\infty)\\\gamma_2\in [1,\infty)}}\bigcup_{\substack{\ell_1\in [0,\infty)\\\ell_2\in [0,\infty)}}S_2(\gamma_1,\gamma_2,\ell_1,\ell_2)\qquad\text{and}\qquad
    S_2' \coloneqq \bigcup_{\substack{\gamma_1\in [2,3)\\\gamma_2\in [1,2)}} \bigcup_{\substack{\ell_1\in [0,\infty)\\\ell_2\in [0,\infty)}}S_2(\gamma_1,\gamma_2,\ell_1,\ell_2).
  \end{equation}
  The condition that $\gamma_1<3$ in the definition of $S_2'$ is implied by the condition that $\gamma_2<2$, but we include it for concreteness.
  \item We define $S$ to  be the set of all $F$ such that there are functions $F_1\in S_1$ and $F_2\in S_2$ such that $F = F_1 + F_2$, and similarly $S'$ to be the set of all $F$ such that there are $F_1\in S_1$ and $F_2\in S_2'$ such that $F = F_1 + F_2$. We say that $f$ is an element of $\bar{S}_1$, $\bar{S}_2$,  $\bar{S}_2'$, $\bar{S}$, or $\bar{S}'$ if the corresponding $F$ given by (\ref{eq:f-Theta-general}) is an element of $S_1$, $S_2$,    $S_2'$, $S$ or $S'$, respectively. 
\end{enumerate}
  \label{def:S}
\end{definition}
\begin{remark}
    For a reaction term $f$ and corresponding $F$ given by (\ref{eq:f-Theta-general}), the condition (\ref{S1Bd-F}) is equivalent to the condition that
    \begin{equation}
    \label{S1Bd}
        \Lip(f(t,\cdot))\leq t^{-1}L_1\quad\text{and}\quad \Lip(f'(t,\cdot))\leq t^{-1/2}L_2.
    \end{equation}
    Similarly, (\ref{S2Bd-F}) is equivalent to the condition that
     \begin{equation}
     \label{S2Bd}
    \Lip(f(t,\cdot)|_{[0,u]})\le \ell_1(t^{-1}+|u|^{\gamma_1}t^{\frac{\gamma_1-2}{2}})\quad\text{and}\quad \Lip(f'(t,\cdot))|_{[0,u]})\le \ell_2(t^{-1/2}+|u|^{\gamma_2}t^{\frac{\gamma_2-1}{2}}).
  \end{equation}
  It will also be important that the condition that $F$ is increasing in the second argument means that $f$ is as well, and so in particular if $f\in \bar S_2$, then \begin{equation}f'(t,u)\ge 0.\label{eq:S2derivpos}\end{equation}
\end{remark}

To state our main theorem, we introduce the McKean--Vlasov equation
\begin{equation}
    \label{eq:McK-V}
      \begin{cases}
      \partial_t v_\eps=\frac12\Delta v_\eps+\mathfrak{m} v_\eps - \frac1{\log\eps^{-1}} \mathbb{E}[f'(t+\eps^2,v_\eps)] v_\eps;\\
          v_\eps(0,x)=\eta_\eps(x).
      \end{cases}
    \end{equation}
  Roughly speaking, this equation is obtained from (\ref{eq:u-f-eqn}) by ``projecting the nonlinearity onto the first Wiener chaos assuming that the solution is Gaussian.''
  Of course, the solution to (\ref{eq:u-f-eqn}) is in general \emph{not} Gaussian, but the solution to (\ref{eq:McK-V}) is, and the term $\mathbb{E}[f'(t+\eps^2,v_\eps)] v_\eps = \frac{\mathbb{E}[f(t+\eps^2,v_\eps)v_\eps]}{\mathbb{E}[v_\eps^2]} v_\eps$ (recalling the Gaussian integration by parts formula) is indeed the projection of $f(t+\eps^2,v_\eps)$ onto the first Wiener chaos. We will show in Section~\ref{ExistenceUniquenessSection} below that, assuming that $f\in S$, the problem (\ref{eq:McK-V}) is well-posed, and in fact the solution is given by
    \begin{equation}\label{eq:v-explicit}
      v_\eps(t,x) = \sigma_\eps(t) G^{\mathfrak{m}}_{t}*\eta_\eps(x),
    \end{equation}
    where we have defined\[
      G^{\mathfrak m}_t(x) = e^{\mathfrak m t}G_t(x)
  \] and $\sigma_\eps$ solves (and indeed is the unique solution to) the ODE 
    \begin{equation}\label{eq:sigmaepsODE}
      \begin{cases}
      \dot\sigma_\eps(t) = -\frac1{\log\eps^{-1}}\mathbb{E}[f'(t+\eps^2,\sigma_\eps(t)G^\mathfrak{m}_{t}*\eta_\eps(x))]\sigma_\eps(t);\\
      \sigma_\eps(0)=1.
    \end{cases}
    \end{equation}
In (\ref{eq:sigmaepsODE}) and throughout the paper, we use $*$ to denote spatial (never space-time or temporal) convolution. 

    Our first main result shows that the McKean--Vlasov equation (\ref{eq:McK-V}) approximates well the PDE (\ref{eq:u-f-eqn}):
    \begin{theorem}\label{thm:main-theorem-McK-V}
 Suppose that $f\in \bar{S}'$. Then we have, for any $T_0\in (0,\infty)$, that
  \begin{equation}
  \label{eq:main-theorem-McK-V}
  \adjustlimits\lim_{\eps\to 0}\sup_{T\in [0,T_0]}\sup_{X\in\mathbb{R}^2} 
  (T+\eps^2)^{1/2}|u_\eps(T,X)-v_\eps(T,X)|_{L^2(\mathbb{P})} =0.
  \end{equation}
\end{theorem}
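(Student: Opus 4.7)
The plan is to apply Duhamel's formula to the difference $w_\eps := u_\eps - v_\eps$, which vanishes at $t=0$, and then estimate the resulting integral. Specifically, $w_\eps$ satisfies
\begin{equation*}
w_\eps(T,X) = -\frac{1}{\log\eps^{-1}}\int_0^T G^{\mathfrak m}_{T-s}*\mathcal{N}_\eps(s,\cdot)(X)\,ds, \quad \mathcal{N}_\eps(s,y):=f(s+\eps^2,u_\eps) - \mathbb{E}[f'(s+\eps^2,v_\eps)]\,v_\eps,
\end{equation*}
and I would split $\mathcal{N}_\eps = \mathcal{N}_\eps^{\mathrm{Lip}} + \mathcal{N}_\eps^{\mathrm{chaos}}$ by inserting $\pm f(s+\eps^2,v_\eps)$, giving the Lipschitz-difference term $\mathcal{N}_\eps^{\mathrm{Lip}} := f(s+\eps^2,u_\eps) - f(s+\eps^2,v_\eps)$ and the chaos-projection remainder $\mathcal{N}_\eps^{\mathrm{chaos}} := f(s+\eps^2,v_\eps) - \mathbb{E}[f'(s+\eps^2,v_\eps)]\,v_\eps$. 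These two pieces admit qualitatively different estimates and I would handle them separately.

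For the Lipschitz piece, using the decomposition $F = F_1 + F_2$ with $F_1 \in S_1$ and $F_2 \in S_2'$ from Definition~\ref{def:S}, the mean value theorem together with the bounds (\ref{S1Bd}) and (\ref{S2Bd}) yields the pointwise bound
\begin{equation*}
|\mathcal{N}_\eps^{\mathrm{Lip}}(s,y)| \leq C\Bigl[(s+\eps^2)^{-1} + (|u_\eps(s,y)|+|v_\eps(s,y)|)^{\gamma_1}(s+\eps^2)^{(\gamma_1-2)/2}\Bigr]|w_\eps(s,y)|.
\end{equation*}
Combining this with the explicit Gaussianity of $v_\eps$ from (\ref{eq:v-explicit}) and an a priori moment bound of the form $\mathbb{E}[|u_\eps(s,y)|^p]\leq C_p(s+\eps^2)^{-p/2}$ (which I would have to establish separately by, e.g., an energy argument for $u_\eps$), and applying H\"older's inequality with the restriction $\gamma_1<3$ keeping the Lebesgue exponents admissible, I would obtain an estimate of the form
\begin{equation*}
|\mathcal{N}_\eps^{\mathrm{Lip}}(s,y)|_{L^2(\mathbb P)} \leq C(s+\eps^2)^{-1}\sup_Y|w_\eps(s,Y)|_{L^{2+\delta}(\mathbb P)}
\end{equation*}
for some small $\delta>0$. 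The key virtue of the resulting Gronwall kernel is that $(\log\eps^{-1})^{-1}\int_0^T(s+\eps^2)^{-1}\,ds = (\log\eps^{-1})^{-1}\log((T+\eps^2)/\eps^2) = O(1)$ as $\eps \to 0$, so this kernel has uniformly bounded total mass.

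For the chaos piece, I would exploit that $v_\eps(s,\cdot)$ is a mean-zero Gaussian field with variance $\Sigma_\eps(s) = \sigma_\eps(s)^2 e^{2\mathfrak m s}/(4\pi(s+\eps^2))$. Expanding $f(s+\eps^2,v_\eps(s,y))$ in Hermite polynomials in $v_\eps(s,y)/\sqrt{\Sigma_\eps(s)}$, the coefficient of the first Hermite polynomial is $\sqrt{\Sigma_\eps(s)}\,\mathbb{E}[f'(s+\eps^2,v_\eps)]$ by Stein's lemma, so $\mathcal{N}_\eps^{\mathrm{chaos}}$ lies in the orthogonal complement of the first Wiener chaos; since $f$ is odd, it in fact lies entirely in chaos of order $\geq 3$. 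Computing the variance of $\int_0^T\!\!\int G^{\mathfrak m}_{T-s}(X-y)\,\mathcal{N}_\eps^{\mathrm{chaos}}(s,y)\,dy\,ds$ as a sum over odd $k\geq 3$ of iterated integrals of Hermite coefficients against $\hat\rho_\eps(s,s';y,y')^k$, where $\hat\rho_\eps \leq 1$ is the normalized covariance of $v_\eps$, the faster-than-linear decay of $\hat\rho_\eps^k$ should show that the total variance is at worst $O(\log\eps^{-1})$; after dividing by $(\log\eps^{-1})^2$, the chaos contribution to $|w_\eps(T,X)|_{L^2(\mathbb P)}^2$ is of order $(\log\eps^{-1})^{-1}$, uniformly in $T\in[0,T_0]$ and $X\in\mathbb{R}^2$.

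Combining these estimates via Gronwall's inequality applied to $a_\eps(T) := \sup_X|w_\eps(T,X)|_{L^2(\mathbb P)}$ (with a bootstrap in the Lebesgue exponent to convert $\sup_Y|w_\eps|_{L^{2+\delta}}$ into an estimate on $a_\eps$, using the aforementioned a priori moment bounds) should give $a_\eps(T) = O((\log\eps^{-1})^{-1/2})$, from which the desired $\sup_{T\in[0,T_0]}(T+\eps^2)^{1/2}a_\eps(T)\to 0$ follows immediately. The main obstacle, in my view, will be the handling of the superlinear nonlinearity coming from $F_2\in S_2'$: the restrictions $\gamma_1<3$ and $\gamma_2<2$ are essentially sharp for the moment-interpolated Lipschitz bound $(|u_\eps|+|v_\eps|)^{\gamma_1}(s+\eps^2)^{(\gamma_1-2)/2}$ to close against the $(s+\eps^2)^{-1/2}$-scale moments of $u_\eps$ and $v_\eps$ and produce exactly the critical kernel $(s+\eps^2)^{-1}$, and the $L^p$-bootstrap must be arranged so that the Lebesgue exponents do not escape to infinity along the iteration.
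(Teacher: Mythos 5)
Your decomposition $\mathcal{N}_\eps = \mathcal{N}_\eps^{\mathrm{Lip}} + \mathcal{N}_\eps^{\mathrm{chaos}}$ is a natural idea, and your variance computation for the chaos piece is in the right spirit (it is close in content to the paper's treatment of the term $I_2$ in (\ref{eq:I0I1I2I3}) via the Gaussian Poincaré covariance inequality). However, there is a fatal gap in the treatment of the Lipschitz piece: the Grönwall iteration you set up does not close, and not for the Lebesgue-exponent reason you flag at the end, but for a more basic reason of spatial decorrelation.

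Concretely: once you pass to the $L^2(\mathbb{P})$-norm pointwise in space, the only estimate available for the Duhamel integral of the Lipschitz piece is Minkowski's inequality, i.e.\
\begin{equation*}
\left|\int G^{\mathfrak m}_{T-s}(X-y)\,\mathcal{N}_\eps^{\mathrm{Lip}}(s,y)\,dy\right|_{L^2(\mathbb P)}
\le \int G^{\mathfrak m}_{T-s}(X-y)\,|\mathcal{N}_\eps^{\mathrm{Lip}}(s,y)|_{L^2(\mathbb P)}\,dy
= |\mathcal{N}_\eps^{\mathrm{Lip}}(s,0)|_{L^2(\mathbb P)}
\end{equation*}
by stationarity. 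Combined with your bound $|\mathcal{N}_\eps^{\mathrm{Lip}}(s,\cdot)|_{L^2(\mathbb P)} \lesssim (s+\eps^2)^{-1}\sup_Y|w_\eps(s,Y)|_{L^{2+\delta}}$, and the a priori moment bound $\sup_Y|w_\eps(s,Y)|_{L^{2+\delta}} \lesssim (s+\eps^2)^{-1/2}$, the Lipschitz contribution becomes
\begin{equation*}
\frac{1}{\log\eps^{-1}}\int_0^T (s+\eps^2)^{-3/2}\,ds \sim \frac{1}{\eps\log\eps^{-1}},
\end{equation*}
which is \emph{worse} by a factor of order $\eps^{-1}(T+\eps^2)^{1/2}$ than the target $(T+\eps^2)^{-1/2}(\log\eps^{-1})^{-1/4}$. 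In other words, the Grönwall kernel you obtain is $(s+\eps^2)^{-3/2}$ rather than the critical $(s+\eps^2)^{-1}(T+\eps^2)^{-1/2}$, and the $(\log\eps^{-1})^{-1}$ prefactor cannot compensate. The issue is that Minkowski throws away the fact that $\mathcal{N}_\eps^{\mathrm{Lip}}(s,\cdot)$ (equivalently, $w_\eps(s,\cdot)$) is spatially decorrelated at scale $\sqrt{s}$: convolving such a field against a heat kernel at scale $\sqrt{T-s}$ should buy a factor $\sqrt{(s+\eps^2)/(T+\eps^2)}$. Without a quantitative decorrelation estimate for the error field $w_\eps$, you cannot access this gain, and the iteration diverges.

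This is exactly what the paper's Malliavin-derivative strategy supplies: the sign-preserving linearized PDE~(\ref{mallDerivAllenCahn}) and the maximum principle give the deterministic pointwise bound $0\le D_z u_\eps(t,x)\le e^{3L_1+\mathfrak m t}G_{t+\eps^2}(x-z)$ of Lemma~\ref{ptwMallbd}, which is both the source of the needed decorrelation (via the Gaussian Poincaré covariance inequality applied to $f'(v_\eps)$) and the source of the a priori moment and concentration bounds of Lemma~\ref{concentrationIneq}. The iteration in the paper therefore runs at the level of $|D_z w_\eps(t,x)|_{L^2(\mathbb P)}$, tracked against the profile $G_{t+\eps^2}(x-z)$, and only afterwards is integrated in $z$ via Poincaré to recover bounds on $w_\eps$ itself; this is what produces the correct kernel. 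Your secondary concern about the $L^p$-bootstrap is also genuine (pulling out the superlinear growth of $f_2'$ forces the exponent to climb), and the paper sidesteps it by truncating the nonlinearity at scale $g(\eps)/\sqrt{t}$ and running Proposition~\ref{lipSoltnBd} with $\eps$-dependent Lipschitz constants, using the sub-Gaussian concentration to control the truncation error. I would encourage you to keep the Lipschitz/chaos split as a conceptual organizing principle, but to realize that the chaos-variance computation alone cannot carry the argument: some form of Malliavin or covariance-derivative bound on $w_\eps$ is essential to close the Grönwall step for $\mathcal{N}_\eps^{\mathrm{Lip}}$.
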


We will see that the typical size of the fluctuations of $u_\eps(T,X)$ and $v_\eps(T,X)$ are on the order of $(T+\eps^2)^{-1/2}$, so the factor of $(T+\eps^2)^{1/2}$ in (\ref{eq:main-theorem-McK-V}) is the appropriate normalization for the error. Also, since $u_\eps(T,X)$ and $v_\eps(T,X)$ are stationary in $X$, the $\sup$ over $X$ in (\ref{eq:main-theorem-McK-V}) is somewhat superfluous, but we include it for completeness.

Theorem~\ref{thm:main-theorem-McK-V} is most useful when we can understand the limiting behavior of $v_\eps$ as $\eps\to 0$. Given~(\ref{eq:v-explicit}), this amounts to studying the behavior of the ODE (\ref{eq:sigmaepsODE}) as $\eps\to 0$. Due to the attenuation by $\log\eps^{-1}$, this is a singular limit, and the ODE does not approach a limiting ODE directly. In our next theorem, we show that it does approach a limit after the change of variables \begin{equation}\label{eq:tchgvar}t = \eps^{2-q}-\eps^2.\end{equation} This change of variables arises due to the scaling properties of the problem in two dimensions and is similar to the natural time scale appearing in many other systems with similar scaling; see for example the discussion in \cite{MR4413215}. This singular change of variables means that we must now impose the self-similar form (\ref{eq:f-Theta-self-similar}) on $f$. Define, for $\eps>0$, the change of variables
\begin{equation}
  \bar{\sigma}_\eps(q) \coloneqq \sigma(\eps^{2-q}-\eps^2).
\end{equation}
\begin{theorem}
\label{thm: LimitingOde}
  Suppose that $F\in S$ and $f$ is given by (\ref{eq:f-Theta-self-similar}). Then we have
  \begin{equation}\label{eq:sigmabareqn}
    \adjustlimits\lim_{\eps\to0}\sup_{q\in [0,2]}|\bar{\sigma}_{\eps}(q)-\bar\sigma(q)|  =0,
  \end{equation}
  where $\bar\sigma$ is the unique solution to the ODE problem
  \begin{equation}
    \begin{cases}
      \dot{\bar\sigma} (q) = - \mathbb{E}[F'( \bar\sigma(q)G_1*\eta(0))]\bar{\sigma}(q);\\
      \bar\sigma(0) =1.
    \end{cases}\label{eq:sigmabarODE}
  \end{equation}
  \end{theorem}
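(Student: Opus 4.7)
The plan is to perform the change of variables $t = \eps^{2-q}-\eps^2$ in (\ref{eq:sigmaepsODE}), derive a closed ODE for $\bar\sigma_\eps$ in $q$, and then compare it with (\ref{eq:sigmabarODE}) by a Grönwall argument. The driving observation is that the Jacobian $\dot t(q) = \eps^{2-q}\log\eps^{-1}$ of this substitution exactly cancels the $1/\log\eps^{-1}$ factor in (\ref{eq:sigmaepsODE}).

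First, by the chain rule applied to (\ref{eq:sigmaepsODE}) together with the identity $f'(t,u) = t^{-1}F'(t^{1/2}u)$ coming from (\ref{eq:f-Theta-self-similar}),
\[
\dot{\bar\sigma}_\eps(q) = -\mathbb{E}\bigl[F'\bigl(\eps^{(2-q)/2}\,\bar\sigma_\eps(q)\,G^{\mathfrak m}_{t(q)}*\eta_\eps(x)\bigr)\bigr]\,\bar\sigma_\eps(q).
\]
Since $G^{\mathfrak m}_{t(q)}*\eta_\eps = e^{\mathfrak m t(q)}G_{\eps^{2-q}}*\eta$ and $\|G_s\|_{L^2}^2=(4\pi s)^{-1}$, the centered Gaussian $\eps^{(2-q)/2}G^{\mathfrak m}_{t(q)}*\eta_\eps(x)$ is distributed as $e^{\mathfrak m t(q)}G_1*\eta(0)$. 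Setting $\Psi(a)\coloneqq\mathbb{E}[F'(a\,G_1*\eta(0))]$, this yields
\[
\dot{\bar\sigma}_\eps(q) = -\Psi\bigl(e^{\mathfrak m t(q)}\bar\sigma_\eps(q)\bigr)\,\bar\sigma_\eps(q),
\]
to be compared with $\dot{\bar\sigma}(q) = -\Psi(\bar\sigma(q))\,\bar\sigma(q)$.

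Because $G_1*\eta(0)$ is Gaussian and $F',F''$ have at most polynomial growth by Definition~\ref{def:S}, differentiation under the expectation shows that $\Psi\in C^1$ and $a\mapsto\Psi(a)a$ is locally Lipschitz. Writing $F=F_1+F_2$ as in Definition~\ref{def:S}, the $S_2$-part contributes $\Psi_2\ge 0$ (since $F_2$ is increasing, so $F_2'\ge 0$) and the $S_1$-part contributes a bounded $|\Psi_1|\le L_1$. Multiplying the ODE for $\bar\sigma_\eps$ by $\bar\sigma_\eps$ and using $\Psi_2\ge 0$ gives $\frac{d}{dq}\bar\sigma_\eps^2\le 2L_1\bar\sigma_\eps^2$, hence $|\bar\sigma_\eps(q)|\le e^{L_1 q}$ on $[0,2]$ (and likewise for $\bar\sigma$). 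Combined with polynomial growth of $\Psi$, this bounds $|\dot{\bar\sigma}_\eps|$ and $|\dot{\bar\sigma}|$ uniformly and makes both ODEs globally well-posed on $[0,2]$.

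Subtracting the two ODEs, adding and subtracting $\Psi(\bar\sigma_\eps(s))\bar\sigma_\eps(s)$, and combining the uniform bounds with local Lipschitzness of $\Psi(a)a$,
\[
|\bar\sigma_\eps(q) - \bar\sigma(q)| \le C\int_0^q |\bar\sigma_\eps(s)-\bar\sigma(s)|\,ds + C\int_0^q \bigl|e^{\mathfrak m t(s)}-1\bigr|\,ds.
\]
The last integral, under the change of variables $u=\eps^{2-s}$, equals
\[
\frac{1}{\log\eps^{-1}}\int_{\eps^2}^1 \frac{\bigl|e^{\mathfrak m(u-\eps^2)}-1\bigr|}{u}\,du = O\bigl(1/\log\eps^{-1}\bigr),
\]
since $(e^{\mathfrak m u}-1)/u$ is bounded on $[0,1]$. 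Grönwall then yields $\sup_{q\in[0,2]}|\bar\sigma_\eps(q)-\bar\sigma(q)| = O(1/\log\eps^{-1})$, proving (\ref{eq:sigmabareqn}). The main (mild) obstacle is that the factor $e^{\mathfrak m t(q)}$ does \emph{not} tend to $1$ uniformly in $q\in[0,2]$ (it is of size $e^{\mathfrak m}$ near $q=2$); the remedy is precisely the logarithmic scaling of the change of variables, which makes its time average $O(1/\log\eps^{-1})$, exactly what Grönwall requires. When $\mathfrak m=0$ the discrepancy vanishes identically.
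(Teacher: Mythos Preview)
Your argument is correct and follows the same overall architecture as the paper (perform the change of variables $t=\eps^{2-q}-\eps^2$, identify the law of the rescaled Gaussian, and compare the two ODEs by Gr\"onwall), but you handle the $\mathfrak m\neq 0$ discrepancy in a cleaner way. The paper bounds the forcing pointwise by $C(e^{|\mathfrak m|\eps^{2-q}}-1)$, which does \emph{not} vanish at $q=2$; consequently its Gr\"onwall step yields only pointwise convergence on $[0,2)$, and a separate equicontinuity argument (uniform Lipschitz bounds on $\dot{\bar\sigma}_\eps$) is needed to push the convergence to the endpoint $q=2$ and upgrade to uniformity. You instead keep the forcing inside the integral and observe, via the substitution $u=\eps^{2-s}$, that $\int_0^q|e^{\mathfrak m t(s)}-1|\,ds=O(1/\log\eps^{-1})$ uniformly in $q\in[0,2]$; Gr\"onwall then gives the uniform bound (and even a quantitative rate) in one stroke. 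One small imprecision: after the substitution the integrand is $|e^{\mathfrak m(u-\eps^2)}-1|/u$, not $(e^{\mathfrak m u}-1)/u$, but the same mean-value bound $|e^{\mathfrak m(u-\eps^2)}-1|\le|\mathfrak m|e^{|\mathfrak m|}(u-\eps^2)\le|\mathfrak m|e^{|\mathfrak m|}u$ applies, so the conclusion stands.
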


  As a special case, when $F(u) = \lambda^2 u^3$, the ODE in (\ref{eq:sigmabarODE}) becomes
  \[
  \dot{\overline\sigma}(q) = -\frac{3\lambda^2}{4\pi} \overline\sigma(q)^3,
  \] and so
  \begin{equation}
  \overline\sigma(q) = \left(1+\frac{3q\lambda^2}{2\pi}\right)^{-1/2}.\label{eq:overlinesigmaexplicit}
  \end{equation}Thus we recover the limiting behavior of the Allen--Cahn equation studied in \cite{GRZ24}:
\begin{corollary}\label{cor:AC-recover}Let $\lambda \in (0,\infty)$ and
    let $u_\eps$ be the solution to 
\begin{equation}
    \begin{cases}
    \label{eq:allen-cahn-corollary}
        \partial_t u_\eps = \frac{1}{2}\Delta u_\eps + \mathfrak{m}u_\eps - \frac{\lambda^2}{\log\eps^{-1}}u_\eps^3;\\
        u_\eps(0,x) = \eta_\eps(x).
    \end{cases}
\end{equation}
Then for any $T_0\in (0,\infty)$, we have 
\begin{equation}
        \label{eq:allenCahnCorollaryResult-full}
         \lim_{\varepsilon\to 0}\sup_{\substack{T\in(0,T_0]\\X\in \mathbb{R}^2}} (T+\eps^2)^{1/2}\left|u_\varepsilon(T,X) -  \left(1+\frac{3(2-\log_\eps T)}{2\pi}\lambda^2\right)^{-1/2}G^\mathfrak{m}_{T}*\eta(X)\right|_{L^2(\mathbb{P})} = 0.
        \end{equation}
       In particular, for any fixed $T_0>T_0'>0$, we have \begin{equation}
        \label{eq:allenCahnCorollaryResult}
        \lim_{\varepsilon\to 0}\sup_{\substack{T\in(T_0',T_0]\\X\in \mathbb{R}^2}} \left|u_\varepsilon(T,X) -  \left(1+\frac{3}{\pi}\lambda^2\right)^{-1/2}G^\mathfrak{m}_{T}*\eta(X)\right|_{L^2(\mathbb{P})} = 0.
        \end{equation}
        %where $\bar\sigma(2) = \left(1+\frac{3}{\pi}\lambda^2\right)^{-1/2}$.
\end{corollary}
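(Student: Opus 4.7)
The plan is to combine Theorems \ref{thm:main-theorem-McK-V} and \ref{thm: LimitingOde}, solve the limiting ODE explicitly, and then handle two technical passages: from the mollified noise $\eta_\eps$ back to $\eta$, and from $\log_\eps(T+\eps^2)$ to $\log_\eps T$. First, I verify that the Allen--Cahn nonlinearity lies in $\bar S'$ so that both main theorems apply: writing $\lambda^2 u^3 = t^{-3/2}F(t^{1/2}u)$ gives the time-independent $F(u) = \lambda^2 u^3$, which is odd and increasing with $\Lip(F|_{[0,u]}) \le 3\lambda^2 u^2$ and $\Lip(F'|_{[0,u]}) \le 6\lambda^2 u$, so $F \in S_2(2,1,3\lambda^2,6\lambda^2) \subset S_2' \subset S'$. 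Then the limiting ODE (\ref{eq:sigmabarODE}) is solved explicitly using that $G_1*\eta(0)$ is a centered Gaussian with variance $\|G_1\|_{L^2(\mathbb R^2)}^2 = 1/(4\pi)$: the ODE reads $\dot{\bar\sigma} = -\frac{3\lambda^2}{4\pi}\bar\sigma^3$ with $\bar\sigma(0)=1$, integrated to the formula (\ref{eq:overlinesigmaexplicit}); in particular $\bar\sigma(2) = (1+3\lambda^2/\pi)^{-1/2}$.

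Setting $M(T,\eps) := \bar\sigma(2-\log_\eps T)$, by Theorem \ref{thm:main-theorem-McK-V} and the triangle inequality, the corollary reduces to showing that $(T+\eps^2)^{1/2}|v_\eps(T,X) - M(T,\eps) G^\mathfrak m_T*\eta(X)|_{L^2(\mathbb P)} \to 0$ uniformly; writing $v_\eps = \sigma_\eps(T) G^\mathfrak m_T*\eta_\eps$, I decompose
\begin{equation*}
v_\eps - M\, G^\mathfrak m_T * \eta = \sigma_\eps(T)\, G^\mathfrak m_T*(\eta_\eps - \eta) + (\sigma_\eps(T) - M)\, G^\mathfrak m_T*\eta.
\end{equation*}
For the first term, a Fourier-side computation using $(1-e^{-\eps^2|\xi|^2/2})^2 \le \min(1, \eps^4|\xi|^4/4)$ yields $|G^\mathfrak m_T*(\eta_\eps - \eta)(X)|_{L^2(\mathbb P)}^2 = O(\eps^4/T^3)$ for $T$ bounded below, so the contribution is $O(\eps^2/T) \to 0$ uniformly on $[T_0', T_0]$ after multiplication by $(T+\eps^2)^{1/2}\sigma_\eps(T)$. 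For the second term, Theorem \ref{thm: LimitingOde} combined with $\sigma_\eps(T) = \bar\sigma_\eps(2-\log_\eps(T+\eps^2))$ gives $\sigma_\eps(T) \to \bar\sigma(2-\log_\eps(T+\eps^2))$ uniformly; since $|\log_\eps(T+\eps^2) - \log_\eps T| = |\log(1+\eps^2/T)|/|\log\eps| \lesssim \eps^2/(T|\log\eps|) \to 0$ uniformly on $[T_0', T_0]$ and $\bar\sigma$ is Lipschitz on a compact neighborhood of $q=2$, we get $|\sigma_\eps(T) - M(T,\eps)| \to 0$; together with $(T+\eps^2)^{1/2}|G^\mathfrak m_T*\eta(X)|_{L^2(\mathbb P)} = O(1)$ uniformly on $[T_0', T_0]$, this proves (\ref{eq:allenCahnCorollaryResult-full}) restricted to $T \in [T_0', T_0]$, and (\ref{eq:allenCahnCorollaryResult}) follows since $\bar\sigma(2-\log_\eps T) \to \bar\sigma(2) = (1+3\lambda^2/\pi)^{-1/2}$ uniformly on such a subset.

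The main obstacle is the uniformity down to $T \to 0$ required by (\ref{eq:allenCahnCorollaryResult-full}), since both $|G^\mathfrak m_T*\eta(X)|_{L^2(\mathbb P)}$ and $|\log_\eps(T+\eps^2) - \log_\eps T|$ deteriorate for small $T$. The resolution hinges on the $(T+\eps^2)^{1/2}$ prefactor matching the natural scale of the regularized heat kernel $G_{T+\eps^2}$: in the regime $T \lesssim \eps^2$ one has $T+\eps^2 \asymp \eps^2$ so the singular behavior is absorbed, while careful accounting of the $\log_\eps$ argument change closes the estimate in the intermediate regime.
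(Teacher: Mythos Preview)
Your overall approach---combining Theorem~\ref{thm:main-theorem-McK-V} with Theorem~\ref{thm: LimitingOde} and the explicit formula~(\ref{eq:overlinesigmaexplicit})---is exactly the one the paper intends, and your argument for~(\ref{eq:allenCahnCorollaryResult}) on the restricted interval $[T_0',T_0]$ is correct.

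There is, however, a genuine gap in your handling of~(\ref{eq:allenCahnCorollaryResult-full}) for small $T$, and your final paragraph does not close it. Concretely, your first error term $(T+\eps^2)^{1/2}\sigma_\eps(T)\,|G^{\mathfrak m}_T*(\eta_\eps-\eta)(X)|_{L^2(\mathbb P)}$ does \emph{not} vanish uniformly: a direct computation gives
\[
\|G_{T+\eps^2}-G_T\|_{L^2(\mathbb R^2)}^2=\frac{\eps^4}{4\pi\,T(T+\eps^2)(2T+\eps^2)},
\]
so at $T=\eps^2$ the scaled term equals $\sigma_\eps(\eps^2)e^{\mathfrak m\eps^2}/\sqrt{12\pi}$, which is bounded away from zero. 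Likewise $|\log_\eps(T+\eps^2)-\log_\eps T|$ is of order $1$ when $T\sim\eps^2$, and for $T$ small enough that $2-\log_\eps T<-2\pi/(3\lambda^2)$ the coefficient $(1+\tfrac{3(2-\log_\eps T)}{2\pi}\lambda^2)^{-1/2}$ is not even real. In other words, the two ``technical passages'' you set out to make cannot be made uniformly down to $T=0$, and the prefactor $(T+\eps^2)^{1/2}$ does not rescue them. This strongly suggests that the display~(\ref{eq:allenCahnCorollaryResult-full}) in the paper carries a minor imprecision: the natural target arising directly from Theorems~\ref{thm:main-theorem-McK-V} and~\ref{thm: LimitingOde} is $\bar\sigma\bigl(2-\log_\eps(T+\eps^2)\bigr)\,G^{\mathfrak m}_T*\eta_\eps(X)$, for which uniform convergence on $[0,T_0]$ follows immediately without any passage to $\eta$ or to $\log_\eps T$. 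Your proof of~(\ref{eq:allenCahnCorollaryResult}) stands on its own and is what matches the result of~\cite{GRZ24}.
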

Equation~(\ref{eq:allen-cahn-corollary}) is equivalent to Equation~(1.3) of \cite{GRZ24} by the rescaling $u_\varepsilon\mapsto \lambda^{-1}u_\varepsilon$ and $F(u)\mapsto \lambda^{-1}F(\lambda u)$. Thus (\ref{eq:allenCahnCorollaryResult}) coincides with the limit appearing in Theorem~1.1 of \cite{GRZ24}. We note that \cite{GRZ24} imposes the additional hypothesis that $\max\{\mathfrak m,0\}T\le \log(\lambda_{\mathrm{fin}}/\lambda)$ for a constant $\lambda_{\mathrm{fin}}\in (0,\infty)$. Our method removes this restriction.

     \subsection{Sketch of the proof}
     The proof of Theorem \ref{thm:main-theorem-McK-V} uses two main ideas. First, the solution $u_\eps$ to (\ref{eq:u-f-eqn}) should in some sense have Gaussian leading order behavior, and second, if we assume the solution is Gaussian, then the nonlinearity $f(t,u_\eps)$ should be close to the corresponding ``projection onto the first chaos," i.e. $\mathbb{E}[f'(t,u_\eps)]u_\eps$. Both of these ideas should be viewed as consequences of the spatial and temporal decorrelation resulting from the initial condition. Indeed, because the nonlinearities can be decomposed into functions that are either uniformly $C^{1,1}$ or contracting, we expect our solutions to become decorrelated at a scale no greater than if we evolved our initial data according to the heat equation. The spatial averaging from the Laplacian of approximately independent random variables then suggests that the leading order behavior of solutions to (\ref{eq:u-f-eqn}) is close to Gaussian and should evolve roughly according to the McKean--Vlasov equation. We are able to capture this behavior at the level of the Malliavin derivative. Indeed,  the signed initial data of the corresponding PDE and the comparison principle allow us to show (in Lemma~\ref{ptwMallbd} below) that
    \begin{equation}
    \label{mallbdIntro}
        0\leq D_zu_\varepsilon(t,x)\leq  e^{3L_1+\mathfrak{m}t}G_{t+\varepsilon^2}(x-z).
    \end{equation}
    
    With this estimate in hand, we begin by proving Theorem~\ref{thm:main-theorem-McK-V} in the case that the nonlinearity belongs only to $\bar{S}_1$ (see Definition \ref{def:S}):
    \begin{proposition}
    \label{lipSoltnBd}
        For each $T_0\in (0,\infty)$ and $\mathfrak{m}\in\mathbb{R}$, there is a constant $C_{T_0,\mathfrak{m}}<\infty$ such that if
        \begin{equation}
        \label{eq:delta-eps-condition}
            \frac{1}{(\log\eps^{-1})^{1/2}}\leq \min\left\{(2e^{|\mathfrak{m}|T_0}L_1)^{-1},\left(\frac{e^{3L_1}L_2}{1-L_1(\log\eps^{-1})^{-1/2}}\right)^{-1}\right\},
        \end{equation}
     then
     the following holds. Suppose that $f\in \bar{S}_1$ with $C^{1,1}$ constants $L_1$ and $L_2$.  Then for any $X\in\mathbb{R}^2$, we have
        \begin{equation}
        \label{eq:lip-soltn-bd}
           \sup_{T\in [0,T_0]}(T+\eps^2)^{1/2} |u_\varepsilon(T,X)-v_\varepsilon(T,X)|_{L^2(\mathbb{P})}\leq  \frac{C_{T_0,\mathfrak{m}}e^{18L_1}L_2}{(\log\varepsilon^{-1})^{1/4}}\exp\left\{\frac{3 e^{3L_1+|\mathfrak{m}|T}L_2}{1-(L_1+e^{3L_1}L_2)(\log\eps^{-1})^{-1/2}}\right\}.%\frac{(\log\varepsilon^{-1})^{-1/4}}{\sqrt{T+\varepsilon^2}}.
        \end{equation}
    \end{proposition}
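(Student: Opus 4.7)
The plan is to work with the mild form of (\ref{eq:u-f-eqn}) and (\ref{eq:McK-V}). Setting $w_\eps := u_\eps - v_\eps$, Duhamel's formula gives
\begin{equation*}
w_\eps(T,X) = -\frac{1}{\log\eps^{-1}} \int_0^T \int_{\mathbb{R}^2} G^{\mathfrak{m}}_{T-s}(X-y)\, \mathcal{N}_\eps(s,y)\, dy\, ds,
\end{equation*}
where $\mathcal{N}_\eps(s,y) := f(s+\eps^2, u_\eps(s,y)) - \mathbb{E}[f'(s+\eps^2, v_\eps(s,y))]\, v_\eps(s,y)$. I would decompose $\mathcal{N}_\eps = \mathcal{E}_1 + \mathcal{E}_2$, with $\mathcal{E}_1 := f(s+\eps^2,u_\eps) - f(s+\eps^2,v_\eps)$ the Lipschitz difference and $\mathcal{E}_2 := f(s+\eps^2,v_\eps) - \mathbb{E}[f'(s+\eps^2,v_\eps)]v_\eps$ the \emph{Gaussian projection error} measuring how far $f(v_\eps)$ lies from its first Wiener chaos projection.

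The piece $\mathcal{E}_1$ satisfies $|\mathcal{E}_1(s,y)|_{L^2(\mathbb{P})} \leq L_1 (s+\eps^2)^{-1} |w_\eps(s,y)|_{L^2(\mathbb{P})}$ by (\ref{S1Bd}), and eventually feeds into a Gronwall loop for the weighted quantity $\Psi_\eps(T) := \sup_{s\leq T,\,X}(s+\eps^2)^{1/2}|w_\eps(s,X)|_{L^2(\mathbb{P})}$. For $\mathcal{E}_2$, the essential observation is that $v_\eps(s,\cdot)$ is a stationary centered Gaussian field with explicit covariance $\sigma_\eps(s)^2 e^{2\mathfrak{m}s} G_{2(s+\eps^2)}(y-y')$ and pointwise variance $V_\eps(s) \lesssim (s+\eps^2)^{-1}$. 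Using a Mehler-type identity---differentiating $\mathbb{E}[f(v_1)f(v_2)]$ twice in the correlation parameter and invoking $|f''| \leq L_2(s+\eps^2)^{-1/2}$ from (\ref{S1Bd})---yields the sharpened covariance bound
\begin{equation*}
|\mathbb{E}[\mathcal{E}_2(s,y)\mathcal{E}_2(s,y')]| \lesssim L_2^2\, V_\eps(s)^2\, (s+\eps^2)^{-1}\, \rho_\eps(s,y-y')^2,
\end{equation*}
where $\rho_\eps(s,z) := e^{-|z|^2/(4(s+\eps^2))}$ is the normalized spatial correlation of $v_\eps$. An exact Gaussian convolution against $G^{\mathfrak{m}}_{T-s}$ then produces a spatial-averaging gain: the $L^2(\mathbb{P})$ norm of $\int G^{\mathfrak{m}}_{T-s}(X-y)\mathcal{E}_2(s,y)\,dy$ integrates in $s$ to a quantity that vanishes with $\eps$ after the $1/\log\eps^{-1}$ prefactor.

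Combining the two estimates produces a Gronwall inequality for $\Psi_\eps$. The smallness condition (\ref{eq:delta-eps-condition}) ensures the Gronwall coefficient stays bounded away from $1$, and the constants---notably the $e^{3L_1}$ factors appearing through the Malliavin derivative bound (\ref{mallbdIntro}) applied to $u_\eps$ (used in place of the na\"{i}ve $L^\infty$ bound on $f'$ to avoid blow-up of the prefactor)---remain finite, giving the stated bound after Gronwall amplification. The main obstacle is the sharp $\mathcal{E}_2$ estimate: a na\"{i}ve Minkowski bound ignores spatial cancellation entirely and yields only $O(1)$ after division by $\log\eps^{-1}$, so one must exploit the Gaussian chaos structure of $v_\eps$ precisely via the iterated Mehler identity to extract the two factors of $\rho_\eps$ (reflecting that $\mathcal{E}_2$ consists of Wiener chaoses of order at least three) and the resulting spatial averaging that forces the error to vanish.
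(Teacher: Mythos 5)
Your plan is a genuinely different route from the paper's: you work directly with the Duhamel formula for $w_\eps=u_\eps-v_\eps$, split the remainder into a Lipschitz error $\mathcal{E}_1$ and a ``projection onto chaos $\geq3$'' error $\mathcal{E}_2$, and control $\mathcal{E}_2$ via a Mehler/Gaussian-interpolation identity; the paper instead works entirely at the level of $D_zw_\eps$, bounds the analogous projection term $I_2$ via the Gaussian Poincar\'e covariance inequality (Proposition~\ref{lipMainEstimate}), and carries out a short-interval iteration over $[t_m,t_{m+1}]$, alternating between Malliavin-derivative and solution-level bounds (Corollaries~\ref{iterativeMallBd}--\ref{KmInduciton}). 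Your $\mathcal{E}_2$ analysis is essentially sound: $\rho_\eps(s,z)=e^{-|z|^2/(4(s+\eps^2))}$ is indeed the normalized correlation of $v_\eps$, and the twice-iterated Gaussian integration-by-parts in $\rho$ does give a covariance bound with a $\rho^2$ off-diagonal gain of the right order.

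The genuine gap is in the $\mathcal{E}_1$ Gronwall loop for $\Psi_\eps$, which does not close as stated. Using Minkowski on $\int G^{\mathfrak{m}}_{T-s}(X-y)\mathcal{E}_1(s,y)\,dy$ and your bound on $|\mathcal{E}_1|_{L^2(\mathbb{P})}$, the inequality one gets is
\begin{equation*}
(T+\eps^2)^{1/2}|w_\eps(T,X)|_{L^2(\mathbb{P})}\le(T+\eps^2)^{1/2}\mathrm{err}_2(T)+\frac{L_1(T+\eps^2)^{1/2}}{\log\eps^{-1}}\int_0^T\frac{\Psi_\eps(T)}{(s+\eps^2)^{3/2}}\,ds,
\end{equation*}
and since $\int_0^T(s+\eps^2)^{-3/2}\,ds\sim 2\eps^{-1}$, the coefficient on $\Psi_\eps(T)$ is of order $L_1(T+\eps^2)^{1/2}/(\eps\log\eps^{-1})$, which blows up and cannot be absorbed. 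The fix is that the $\mathcal{E}_1$ convolution also requires the spatial-averaging gain you invoke only for $\mathcal{E}_2$: you must show $\mathcal{E}_1(s,\cdot)$ decorrelates at scale $\sqrt{s+\eps^2}$, which in turn requires a pointwise Gaussian bound on $D_zw_\eps(s,x)$ (not just on $D_zu_\eps$ as you use). Because $|D_zw_\eps|_{L^2(\mathbb{P})}$ and $|w_\eps|_{L^2(\mathbb{P})}$ feed into each other, closing this requires precisely the alternating Malliavin-/solution-level scheme the paper develops, including a mechanism (the short-interval iteration (\ref{eq:timescale})--(\ref{eq:reasonfortms})) to control the multiplicative buildup of the Gaussian kernel prefactors over a window where $\frac{1}{\log\eps^{-1}}\int(s+\eps^2)^{-1}\,ds$ is $O(1)$. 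A smaller but real omission is that your Mehler covariance bound is stated only at equal times $s_1=s_2$, whereas the $L^2(\mathbb{P})$ norm of $\mathrm{err}_2$ needs a time-off-diagonal extension (including the $\eps$-dependent decay of the temporal correlation) before the integral in $s_1,s_2$ can be evaluated.
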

     To prove Proposition~\ref{lipSoltnBd},
     we  decompose the accumulated error from the Malliavin derivatives into a piece which captures the leading order Gaussian behavior and terms which depend on the difference of the two solutions and their corresponding Malliavin derivatives. Using decorrelation estimates provided by the Gaussian Poincar\'e covariance inequality, we are able to bound the amount of error in the leading order term and then translate our bounds at the level of the Malliavin derivatives into corresponding bounds on the difference of the two solutions. The error is still difficult to control over the entire interval at once. In fact, our estimate (\ref{mallbdIntro}) implies that the typical size of $u_\eps$ is at most of order $(t+\eps^{2})^{-1/2}$, and in view of our logarithmic attenuation in (\ref{eq:Allen-Cahn-weak}), this means that the bulk of the error is generated at times which are of order $o(1)$, where the nonlinearity has a non-trivial effect on the solution. To account for this fact, we use an exponential timescale similar to that used in \cites{MR3719953,MR4413215} (see Section~\ref{RelTimeScale} below) and construct an iterative scheme which allows us to control the amount of error generated over the short intervals discussed in Section \ref{RelTimeScale}.
     
    We then extend the result by approximation. The contractive properties of the nonlinearities in $\bar{S}_2$ (see Definition \ref{def:S}) mean that the solution to (\ref{eq:u-f-eqn}) will admit a concentration bound that is strong enough for us to add a cutoff to the nonlinearity. The cut-off nonlinearity will satisfy the conditions to be in $\bar{S}_1$, and the quantitative rate of convergence and explicit dependence on $L_1$ and $L_2$ given by Proposition \ref{lipSoltnBd} allow us to show that we still have convergence to the McKean--Vlasov equation even though that the $C^{1,1}$ constants for the cut-off problem must naturally grow as we take $\eps \to 0$.
   \subsection{Relevant timescales}
    \label{RelTimeScale}
    In the proofs of Proposition \ref{lipSoltnBd} and Theorem \ref{thm:main-theorem-McK-V}, it will helpful to view the accumulated error at the final time $T\in (0,T_0]$ in terms of the error accumulated at smaller time scales. Later in the proof, we will see that the decorrelation provided by our estimate (\ref{mallbdIntro}) implies that the typical contribution from the nonlinearity of (\ref{eq:u-f-eqn}) over an interval $[t,t']$ will be of order
    \begin{equation}
    \label{typical-contribution}
     \frac{\log\left(\frac{t'+\eps^2}{t+\eps^2}\right)}{(T+\eps^2)^{1/2}\log\eps^{-1}}
    \end{equation}
    at the final time $T$. This estimate implies that (as $\eps\to 0$) the contribution of the nonlinearity at time scales of order $1$ is negligible: the nonlinearity's effect is happening in an $o(1)$ layer around $t=0$. More precisely, (\ref{typical-contribution}) motivates us to focus on the exponential timescale (\ref{eq:tchgvar}). For both proofs, we will iteratively construct error bounds over intervals that are short enough for us to prove an estimate of order (\ref{typical-contribution}). To this end, we define the sequence of times
	\begin{equation}
    \label{eq:timescale}
		t_m(\varepsilon) = \varepsilon^{2-m\delta_\varepsilon}-\varepsilon^2
	\end{equation}
	for $0\leq m\leq M_\varepsilon$, where 
    \begin{equation}
      \delta_\varepsilon = \frac{1}{\sqrt{\log\varepsilon^{-1}}}\quad\text{and}\quad M_\varepsilon := \left\lfloor \left(2+\frac{\log(T+\varepsilon^2)}{\log\varepsilon^{-1}}\right)\delta_\varepsilon^{-1}\right\rfloor.\label{eq:deltaMepsdefs}
    \end{equation}
    To simplify later calculations, we will always assume that $\varepsilon$ is sufficiently small so that 
    \begin{equation}
        M_\varepsilon \leq 3\delta_{\varepsilon}^{-1},
    \end{equation}
    and to simplify notation, we will not write the dependence on $\eps$ in (\ref{eq:timescale}). The following identity will  help us bound the accumulated error on short intervals:
	\begin{equation}
    \log\left(\frac{t_{m+1}+\varepsilon^2}{t_m+\varepsilon^2}\right) = \delta_\varepsilon\log\varepsilon^{-1}.\label{eq:reasonfortms}
	\end{equation}
\subsection{Organization of the paper}
     In Section~\ref{MckeanVlasovSection}, we establish a well-posedness result for (\ref{eq:McK-V}) and derive an ODE for the limiting behavior of the unique solution. In Section \ref{prelimEstimates}, we derive a pointwise bound on the Malliavin derivative and a corresponding concentration result for our solutions. In Section~\ref{GronwallSection}, we provide Gr\"onwall type estimates which will be used in the proofs of Proposition~\ref{lipSoltnBd} and Theorem~\ref{thm:main-theorem-McK-V}. In Section~\ref{proof-lipschitz}, we prove that solutions to (\ref{eq:u-f-eqn}) converge to the solution of the McKean--Vlasov equation (\ref{eq:McK-V}) in the special case where the nonlinearity $f$ belongs only to $\bar{S}_1$. In Section~\ref{mainResultSection}, we prove Theorem~\ref{thm:main-theorem-McK-V} by approximation using Proposition~\ref{lipSoltnBd}. 
    \subsection{Acknowledgments}
    We warmly thank Simon Gabriel, Tommaso Rosati, and Nikos Zygouras for interesting conversations about their work, as well as Yu Gu for pointers to the literature. A.D.\ was partially supported by the National Science Foundation under grant no.~DMS-2346915.

     \section{The McKean--Vlasov equation}
         \label{MckeanVlasovSection}
     In this section, we will establish the well-posedness of the McKean--Vlasov equation (\ref{eq:McK-V}), and derive a limiting equation in the special case where the non-linearity has the self-similar form (\ref{eq:f-Theta-self-similar}). We will work in the same solution space as the one used in \cite{GRZ24}. That is, we let $\mathcal{E}$ be the class of functions $g$ on $\mathbb{R}^2$ such that there exists some $\lambda = \lambda(g)>0$ with
        \begin{equation}
            \sup_{x\in \mathbb{R}^2}|g(x)|e^{-\lambda |x|}<\infty,
        \end{equation}
     and we define the notion of solution to the McKean--Vlasov equation  (\ref{eq:McK-V}) as in \cite{GRZ24}:
    \begin{definition}
    \label{McKeanVlasovSolDef}
        We say that $v_\varepsilon$ is a solution to (\ref{eq:McK-V}) if it is smooth on $\mathbb{R}_{>0}\times \mathbb{R}^2$, satisfies (\ref{eq:McK-V}), and we have $\sup_{t\in[0,T]}|v_\varepsilon(t,\cdot)|\in \mathcal{E}$ and $\sup_{t\in [0,T]}\mathbb{E}[|f'(t+\eps^2,v_\varepsilon(t,\cdot))|]\in \mathcal{E}$, $\mathbb{P}$-almost surely.
    \end{definition} 
    \subsection{Existence and uniqueness}
    \label{ExistenceUniquenessSection}
      We will begin with a uniqueness result for solutions to the McKean--Vlasov equation (\ref{eq:McK-V}) under the assumption that $f\in \bar{S}$. (See Definition~\ref{def:S}.)
         \begin{lemma}
        \label{MckeanUnique}
      Suppose $f\in \bar{S}$, and in particular that $f=f_1+f_2$ for some $f_1\in \bar{S}_1(L_1,L_2)$ and $f_2\in \bar{S}_2(\gamma_1,\gamma_2,\ell_1,\ell_2)$. Then there is at most one solution to the McKean-Vlasov equation 
            \begin{equation}
            \label{mckeanvlasovUniqueEq}
		\begin{cases}
			\partial_t v_\varepsilon = \frac12\Delta v_\varepsilon + \mathfrak{m} v_\varepsilon +  \frac{1}{\log\varepsilon^{-1}}\mathbb{E}[f'(t+\eps^2,v_\varepsilon)]v_\varepsilon;\\
			v_\varepsilon(0,\cdot) = \eta_\eps(x).
		\end{cases}
	\end{equation}
        \end{lemma}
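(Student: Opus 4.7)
The plan is to show that any solution $v_\varepsilon$ of (\ref{mckeanvlasovUniqueEq}) must take the explicit form $v_\varepsilon(t,x) = \sigma(t)\,G_t^{\mathfrak m}*\eta_\varepsilon(x)$ from (\ref{eq:v-explicit}), thereby reducing uniqueness to well-posedness of the scalar ODE (\ref{eq:sigmaepsODE}). The key first observation is that for any solution, the coefficient $\alpha(t,x) := \frac{1}{\log\varepsilon^{-1}}\mathbb{E}[f'(t+\varepsilon^2,v_\varepsilon(t,x))]$ is a deterministic function of $(t,x)$, so $v_\varepsilon$ solves a linear parabolic equation with deterministic (but a priori $x$-dependent) coefficients and Gaussian initial data. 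Therefore $v_\varepsilon$ is itself a centered Gaussian field, realized as a linear functional of $\eta$, and $\alpha(t,x) = g(\Sigma(t,x),t+\varepsilon^2)$, where $\Sigma(t,x) := \mathbb{E}[v_\varepsilon(t,x)^2]$ and $g(\sigma^2,s) := \frac{1}{\log\varepsilon^{-1}}\mathbb{E}[f'(s, \sigma Z)]$ for $Z\sim N(0,1)$.

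The crux of the argument is to show $\alpha$ is spatially constant. Writing $C(t,x,y) := \mathbb{E}[v_\varepsilon(t,x)v_\varepsilon(t,y)]$, one derives from the PDE the linear covariance equation
\begin{equation*}
\partial_t C = \tfrac12(\Delta_x+\Delta_y)C + \bigl(2\mathfrak m - \alpha(t,x) - \alpha(t,y)\bigr)C,\qquad C(0,x,y)=G_{2\varepsilon^2}(x-y),
\end{equation*}
coupled self-consistently to $\alpha(t,x)=g(C(t,x,x),t+\varepsilon^2)$. Since the initial data depends only on $x-y$, I would apply a Gr\"onwall argument to the shifted difference $D_h(t,x,y) := C(t,x+h,y+h) - C(t,x,y)$: this satisfies a linear equation with zero initial datum and source terms of order $\|D_h(t,\cdot,\cdot)\|_\infty$ (via the local Lipschitz bound on $g$, which is justified because the growth class $\mathcal{E}$ keeps $\Sigma$ bounded on $[0,T]$ for fixed $\varepsilon$). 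Gr\"onwall then forces $D_h\equiv 0$ for every shift $h$, so $C(t,x,y)=\bar C(t,x-y)$ and $\alpha(t,x)=\alpha(t)$.

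With $\alpha$ now spatially constant, the PDE for $v_\varepsilon$ reduces to the linear problem $\partial_t v_\varepsilon = \frac12\Delta v_\varepsilon + (\mathfrak m + \alpha(t))v_\varepsilon$ with initial data $\eta_\varepsilon$, whose unique solution is $v_\varepsilon(t,x) = \sigma(t)\,G_t^{\mathfrak m}*\eta_\varepsilon(x)$ with $\sigma(t) := \exp(\int_0^t\alpha(s)\,ds)$. Substituting back yields (\ref{eq:sigmaepsODE}). The right-hand side of (\ref{eq:sigmaepsODE}) is locally Lipschitz in $\sigma$: the polynomial growth bounds in Definition~\ref{def:S} combined with the finite moments of the Gaussian $G_t^{\mathfrak m}*\eta_\varepsilon(0)$ give local Lipschitz continuity of $\sigma \mapsto \mathbb{E}[f'(t+\varepsilon^2,\sigma G_t^{\mathfrak m}*\eta_\varepsilon(0))]\sigma$. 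Standard ODE theory then yields uniqueness of $\sigma$ on $[0,T]$, and hence of $v_\varepsilon$, with the growth class $\mathcal{E}$ precluding finite-time blow-up that would escape the local-Lipschitz window.

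The main obstacle is the spatial-homogeneity step. Because $\alpha = g(C|_{\mathrm{diag}})$ depends on the unknown itself, translation invariance cannot be read off purely from symmetry of the initial data as it could for a truly linear PDE. The Gr\"onwall argument on $D_h$ is what makes the reduction rigorous, and it relies crucially on the local Lipschitz dependence of $g$ on $\Sigma$ together with the a priori growth bounds enforced by the class $\mathcal{E}$, which keep the variance in a compact range where the Lipschitz constant of $g$ is finite.
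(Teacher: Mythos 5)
Your approach is genuinely different from the paper's, and it is a nice route. The paper proves uniqueness ``head on'': it takes two candidate solutions, subtracts them, and because the dangerous coefficient $-\frac1{\log\eps^{-1}}\mathbb{E}[f_2'(t+\eps^2,v_\eps)]$ has a sign but the difference $\omega_\eps$ does not, it constructs a pair of sub-/super-solutions solving \emph{linear} problems that sandwich $\omega_\eps$, then runs Gr\"onwall on the $L^2(\mathbb{P})$ norm of the super-solution. You instead show that any solution must be a spatially stationary Gaussian field of the explicit form $\sigma(t)G^{\mathfrak m}_t*\eta_\eps$, and then reduce to uniqueness of the scalar ODE~(\ref{eq:sigmaepsODE}). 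Your method has the virtue of simultaneously deriving the structure in Lemma~\ref{McKeanVlasovExistence}, whereas the paper's method handles uniqueness with less machinery but yields no structural information. The central new idea in your argument --- that the covariance function $C(t,x,y)$ itself solves a closed parabolic system and translation invariance can be bootstrapped from the initial data via a Gr\"onwall estimate on $D_h = C(\cdot,\cdot+h,\cdot+h)-C$ --- is the essential non-trivial step, and it is sound, but two points need to be shored up.

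First, the uniform boundedness of $\Sigma(t,x)=C(t,x,x)$ does not follow from membership in $\mathcal{E}$: that class only gives exponential-in-$|x|$ control, almost surely, which is not the same as a uniform $L^p(\mathbb P)$ bound. What you actually need is the a priori moment bound $\sup_{t\le T,x}|v_\eps(t,x)|_{L^p(\mathbb P)}\le K_{p,T}(\eps)$, which the paper invokes at the start of its proof (citing \cite[Proposition~6.2]{GRZ24}); you should cite it too. It is this bound that makes $\alpha$ bounded, makes the coefficients of the covariance PDE bounded, and makes your sup-norm Gr\"onwall closed. Second, your Lipschitz step bounds $|\alpha(t,x)-\alpha(t,x+h)|$ by a constant times $|D_h(t,x,x)|=|\Sigma(t,x+h)-\Sigma(t,x)|$, which requires $g(\cdot,s)$ to be Lipschitz as a function of the \emph{variance} $\sigma^2$. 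Since $f$ need only be $C^{1,1}$ in $u$, one only gets Lipschitz dependence of $\sigma\mapsto g(\sigma^2,s)$, and passing from $\sigma$ to $\sigma^2$ costs a factor $(\sqrt{\Sigma(t,x)}+\sqrt{\Sigma(t,x+h)})^{-1}$; so you need a \emph{lower} bound on $\Sigma$ uniform over $[0,T]\times\mathbb{R}^2$. This is true for fixed $\eps$ --- once $\alpha$ is known to be bounded, the Green's function of the linear PDE is comparable to the heat kernel, giving $\Sigma(t,x)\gtrsim (t+\eps^2)^{-1}\gtrsim (T+\eps^2)^{-1}$ --- but you should say so, since it is exactly the sort of degeneracy that can break covariance-based uniqueness arguments.
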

		\begin{proof}
        We first note that essentially the same argument as the one used in \cite[Proposition~6.2]{GRZ24} can be used to show that for any solution $v_\eps$ to (\ref{mckeanvlasovUniqueEq}) and $p\in [1,\infty)$, there exists a    $K_{p,T}(\varepsilon)$ depending only on $\varepsilon$, $T$,$\mathfrak{m}$, $L_1$, and $p$ such that 
         \begin{equation}
        \label{eq:a-priori-momentbd}
\adjustlimits\sup_{t\in [0,T]}\sup_{x\in \mathbb{R}^2}|v_\varepsilon(t,x)|_{L^p(\mathbb{P})}\leq K_{p,T}(\varepsilon).
        \end{equation}
        This $\eps$-dependent bound is of course highly suboptimal, and we will upgrade this to a stronger $\eps$-independent result in Lemma~\ref{concentrationIneq} below.
	   Now suppose $v_\varepsilon$ and $v_\varepsilon^*$ both solve (\ref{mckeanvlasovUniqueEq}). We write the difference $\omega_\varepsilon := v_\varepsilon - v^*_\varepsilon$, which solves the equation
     \begin{align}
       \begin{split}
               \partial_t \omega_\eps &= \frac{1}{2}\Delta \omega_\eps + \mathfrak{m} \omega_\eps - \frac{1}{\log\varepsilon^{-1}}\mathbb{E}[f'(t+\eps^2,v_\varepsilon)]v_\varepsilon + \frac{1}{\log\varepsilon^{-1}}\mathbb{E}[f'(t+\eps^2,v_\varepsilon^*)]v_\varepsilon^*\\
               &= \frac{1}{2}\Delta \omega_\eps + \mathfrak{m} \omega_\eps - \frac{1}{\log\varepsilon^{-1}}\mathbb{E}[f'(t+\eps^2,v_\varepsilon)]\omega_\varepsilon  - \frac{1}{\log\varepsilon^{-1}}\mathbb{E}[f'(t+\eps^2,v_\varepsilon)-f'(t+\eps^2,v_\varepsilon^*)]v_\varepsilon^*
     \end{split}
       \label{expandedPDEforV}
     \end{align}
       with the initial condition
       \begin{equation}
           \omega_\eps(0,\cdot) = 0.
       \end{equation}
       We will use the method of sub-/super-solutions to bound the error, which will allow us to eliminate the  contracting term $-\frac{1}{\log\varepsilon^{-1}}\mathbb{E}[f_2'(t+\eps^2,v_\varepsilon)]\omega_\varepsilon$ from (\ref{expandedPDEforV}). %A similar construction will be used later on in Section~\ref{subSupSection}. 
       Using the assumption that $f_1\in \bar{S}_1(L_1,L_2)$ and $f_2\in \bar{S}_2(\gamma_1,\gamma_2,\ell_1,\ell_2)$, we have
       \begin{equation}
           \begin{split}
               |\mathbb{E}[f'(t+\eps^2,v_\varepsilon)-f'(t+\eps^2,v_\varepsilon^*)]| &\leq  |\mathbb{E}[f_1'(t+\eps^2,v_\varepsilon)-f_1'(t+\eps^2,v_\varepsilon^*)]|
               \\&\qquad + |\mathbb{E}[f_2'(t+\eps^2,v_\varepsilon)-f_2'(t+\eps^2,v_\varepsilon^*)]|\\
               &\leq \frac{L_2+\ell_2}{\sqrt{t+\varepsilon^2}}|\omega_\varepsilon|_{L^2(\mathbb{P})} + 2\ell_2(t+\varepsilon^2)^{\frac{\gamma_2-1}{2}}\mathbb{E}\left[|\omega_\varepsilon|(|v_\varepsilon|^{\gamma_2}+|v^*_\varepsilon|^{\gamma_2})\right].
           \end{split}
       \end{equation}
       Then by our moment bound for both $v_\varepsilon$ and $v^*_\varepsilon$ from (\ref{eq:a-priori-momentbd}) and  the Cauchy--Schwarz inequality, this implies
       \begin{equation}
           \begin{split}
               |\mathbb{E}[f'(t+\eps^2,v_\varepsilon)-f'(t+\eps^2,v_\varepsilon^*)]| 
               &\leq C(\eps)|\omega_\varepsilon|_{L^2(\mathbb{P})},
           \end{split}
       \end{equation}
       where $C(\eps)$ is a constant depending on $\eps$ as well as $\ell_2$, $L_2$, $\gamma_2$, and $T$. 
        Now, we define the sub-solution $\underline{\omega}_\varepsilon$ as the solution of %, which solves the equation
        \begin{equation}
        \label{v*subSoltnEq}
           \begin{cases}
             \partial_t  \underline{\omega}_\varepsilon = \frac{1}{2}\Delta \underline{\omega}_\varepsilon  + \left(\mathfrak{m}+\frac{L_1}{(t+\eps^2)\log\eps^{-1}}\right)\underline{\omega}_{\varepsilon}    - \frac{C(\eps)}{\log\varepsilon^{-1}}|\omega_\varepsilon|_{L^2(\mathbb{P})}|v^*_\varepsilon|\\
    \underline{\omega}_\varepsilon(0,\cdot) = 0.
           \end{cases}
       \end{equation}
       %\Bryan{This needs to be just absolute value ($|v_\eps^*|$) to satisfy (2.10)}
       The problem (\ref{v*subSoltnEq}) is linear and so has an easy solution theory via the Duhamel formula. The maximum principle (see e.g.\ \cite[Theorem~9 of Chapter~2]{MR181836}) implies that $\underline{\omega}_\eps\leq 0$. Also, if we consider the parabolic operator
    \begin{equation}
      L\omega := \frac{1}{2}\Delta \omega + \mathfrak{m}\omega -\frac{1}{\log\eps^{-1}}\mathbb{E}[f'(t+\eps^2,v_\eps)]\omega - \partial_t\omega,
    \end{equation}
    then we see that
    \begin{equation}
        \begin{split}
            L[\omega_\eps - \underline{\omega}_\eps] &= \frac{1}{\log\eps^{-1}}\mathbb{E}[f'(t+\eps^2,v_\eps)]\underline{\omega}_\eps + \frac{L_1}{(t+\eps^2)\log\eps^{-1}}\underline{\omega}
            \\&\qquad+ \frac{1}{\log\eps^{-1}}\left(\mathbb{E}[f'(t+\eps^2,v_\varepsilon)-f'(t+\eps^2,v_\varepsilon^*)]v_\varepsilon^* - C(\eps)|\omega_\eps|_{L^2(\mathbb{P})}|v_\eps^*|\right)\\
            &\leq 0,
        \end{split}
    \end{equation}
    so another application of the maximum principle gives
    \begin{equation}
        \underline{\omega}_\eps(t,x) \leq \omega_\eps(t,x)
    \end{equation}
    for all $t\in [0,T]$ and $x\in \mathbb{R}^2$. Similarly, we define the super-solution $\overline{\omega}_\varepsilon$, which  solves the equation
       \begin{equation}
       \label{uniquenessSupEq}
           \begin{cases}
               \partial_t  \overline{\omega}_\varepsilon = \frac{1}{2}\Delta \overline{\omega}_\varepsilon  + \left(\mathfrak{m} + \frac{L_1}{(t+\eps^2)\log\eps^{-1}}\right) \overline{\omega}_\varepsilon  + \frac{C(\eps)}{\log\varepsilon^{-1}}|\omega_\varepsilon|_{L^2(\mathbb{P})}|v^*_\varepsilon|\\
               \overline{\omega}^*_\varepsilon(0,\cdot) = 0
           \end{cases}
       \end{equation}
       and satisfies
       \begin{equation}
           \omega_\eps(t,x)\leq \overline{\omega}_\eps(t,x)
       \end{equation}
       for all $t\in [0,T]$ and $x\in \mathbb{R}^2$. Then using the fact that
     \begin{equation}
         \frac{1}{\log\varepsilon^{-1}}\int_0^t \frac{L_1}{s+\varepsilon^2}\,ds \leq \frac{L_1}{\log\varepsilon^{-1}}\log\left(\frac{T+\varepsilon^2}{\varepsilon^2}\right)\leq 3L_1,
     \end{equation}
     we apply Duhamel's formula to (\ref{uniquenessSupEq}) and take $|\cdot|_{L^2(\mathbb{P})}$ of both sides, yielding
     \begin{equation}
|\overline{\omega}_\varepsilon(t,x)|_{L^2(\mathbb{P})}\leq \frac{e^{3L_1}C(\eps)}{\log\varepsilon^{-1}}\int_{0}^t \frac{|\omega_\varepsilon(s,x)|_{L^2(\mathbb{P})}|v_\varepsilon^*(s,x)|_{L^2(\mathbb{P})}}{\sqrt{s+\varepsilon^2}}\,ds.
     \end{equation}
     By (\ref{eq:a-priori-momentbd}), this implies that
      \begin{equation}
         |\overline{\omega}_\varepsilon(t,x)|_{L^2(\mathbb{P})}\leq  \tilde{C}(\eps)\int_{0}^t |\omega_\varepsilon(s,x)|_{L^2(\mathbb{P})}\,ds
     \end{equation}
     for some constant $\tilde{C}(\eps)$ depending only on $\eps$, $L_1$, $\ell_1$, $\gamma_2$, $T$, $\mathfrak{m}$, and $L_1$. 
     By symmetry and the fact that $\underline{\omega}_\eps \leq \omega_\eps\leq \overline{\omega}_\eps$, this gives
     \begin{equation}
         |\omega_\varepsilon(t,x)|_{L^2(\mathbb{P})}\leq   \tilde{C}(\eps)\int_{0}^t |\omega_\varepsilon(s,x)|_{L^2(\mathbb{P})}\,ds.
     \end{equation}
   Applying Gr\"onwall's inequality gives
\begin{equation}
     |v_\eps(t,x)-v_\eps^*(t,x)|_{L^2(\mathbb{P})} = 0,
\end{equation}
 which completes the proof of Lemma \ref{MckeanUnique}.
	\end{proof}
 We also provide the following existence result for (\ref{eq:McK-V}). The proof is essentially the same as the proof of Proposition 1.2 in \cite{GRZ24}, which showed existence in the special case of the Allen--Cahn equation.
  \begin{lemma}
    \label{McKeanVlasovExistence}
       Suppose the nonlinearity $f\in S$. Then the solution to (\ref{eq:McK-V}) is
        \begin{equation}\label{eq:vformula}
v_\varepsilon(t,x)=\sigma_\eps(t)G^{\mathfrak{m}}_{t}*\eta_\eps(x),
        \end{equation}
        where $\sigma_\eps$ is the unique solution to the initial value problem 
        \begin{equation}
        \label{epsilonIVP}
        \begin{cases}
            \dot{\sigma}_\eps(t) = -\frac{1}{\log\varepsilon^{-1}}\mathbb{E}[f'(t+\eps^2,\sigma_\eps(t)G^{\mathfrak{m}}_{t}*\eta_\eps(x))]\sigma_\eps(t)\\
            \sigma_\eps(0) = 1.
        \end{cases}
\end{equation}
    \end{lemma}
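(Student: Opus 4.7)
The plan is to work backward from the explicit formula: first construct $\sigma_\eps$ by solving the ODE~(\ref{epsilonIVP}), then \emph{define} $v_\eps(t,x)\coloneqq \sigma_\eps(t)G_t^{\mathfrak m}*\eta_\eps(x)$ and verify by direct substitution that it satisfies (\ref{eq:McK-V}) in the sense of Definition~\ref{McKeanVlasovSolDef}. Uniqueness of $v_\eps$ is already provided by Lemma~\ref{MckeanUnique}, so existence is the only remaining issue.

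The key observation that makes the ODE tractable is that, because $\eta_\eps$ is a stationary mollification of white noise, $W_\eps(t,x)\coloneqq G_t^{\mathfrak m}*\eta_\eps(x)$ is a centered Gaussian field whose variance $V_\eps(t)\coloneqq \mathbb{E}[W_\eps(t,x)^2]$ is independent of $x$. Consequently, writing $Z$ for a standard Gaussian, the right-hand side of~(\ref{epsilonIVP}) can be written as
\[
R_\eps(t,\sigma)\coloneqq -\frac{1}{\log\eps^{-1}}\,\mathbb{E}\bigl[f'\bigl(t+\eps^2,\sigma\sqrt{V_\eps(t)}\,Z\bigr)\bigr]\sigma,
\]
which depends only on the scalar $\sigma$. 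I would first show that $R_\eps(t,\cdot)$ is locally Lipschitz in $\sigma$ uniformly on compacts in $t$: for the $\bar S_1$ component this follows directly from (\ref{S1Bd}), and for the $\bar S_2$ component from (\ref{S2Bd}) together with the finiteness of Gaussian moments. Standard Picard--Lindelöf theory then yields a unique local solution $\sigma_\eps$.

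To extend $\sigma_\eps$ globally, I would establish an a priori bound. Splitting $f'=f_1'+f_2'$ with $f_1\in \bar S_1$ and $f_2\in \bar S_2$, the sign condition (\ref{eq:S2derivpos}) makes the $f_2'$ contribution dissipative (it can only decrease $|\sigma|$), while (\ref{S1Bd}) gives $|f_1'(t+\eps^2,\cdot)|\le L_1/(t+\eps^2)$. Hence
\[
\frac{d}{dt}|\sigma_\eps(t)|\le \frac{L_1}{(t+\eps^2)\log\eps^{-1}}|\sigma_\eps(t)|,
\]
and integrating yields $|\sigma_\eps(t)|\le \exp\bigl(\tfrac{L_1}{\log\eps^{-1}}\log\tfrac{t+\eps^2}{\eps^2}\bigr)$, which is finite for every $t<\infty$. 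This precludes blowup and gives the unique global solution.

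Finally, with $\sigma_\eps$ in hand, I would verify that $v_\eps(t,x)=\sigma_\eps(t)W_\eps(t,x)$ solves (\ref{eq:McK-V}). Since $W_\eps$ satisfies $\partial_t W_\eps=\tfrac12\Delta W_\eps+\mathfrak m W_\eps$, differentiating the product and invoking the ODE (\ref{epsilonIVP}) produces exactly the McKean--Vlasov right-hand side, using crucially that $\mathbb{E}[f'(t+\eps^2,v_\eps(t,x))]$ is independent of $x$ by stationarity. Smoothness on $\mathbb{R}_{>0}\times\mathbb{R}^2$ is inherited from that of $W_\eps$; membership in $\mathcal{E}$ of both $\sup_t|v_\eps(t,\cdot)|$ and $\sup_t\mathbb{E}[|f'(t+\eps^2,v_\eps)|]$ follows from the a priori bound on $\sigma_\eps$, the almost-sure exponential growth bound for $G_t^{\mathfrak m}*\eta_\eps$, and the polynomial/Lipschitz growth of $f'$ from Definition~\ref{def:S}. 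The main technical nuisance is the last step of checking the $\mathcal E$-membership for the $\bar S_2$ component, which requires combining the pointwise growth bound on $W_\eps$ with the polynomial growth of $f'_2$; everything else is essentially a direct computation modeled on the proof of~\cite[Proposition~1.2]{GRZ24}.
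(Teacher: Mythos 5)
Your proposal is correct and follows essentially the same route as the paper's proof: construct $\sigma_\eps$ via Picard--Lindelöf, obtain global existence from the a priori bound exploiting the nonnegativity of $f_2'$ and the $L_1/(t+\eps^2)$ bound on $f_1'$, define $v_\eps = \sigma_\eps(t)G^{\mathfrak m}_t * \eta_\eps$, and check it solves (\ref{eq:McK-V}) using the heat flow identity and spatial stationarity, deferring the $\mathcal E$-membership details to \cite[Proposition~1.2]{GRZ24}. The only cosmetic difference is that the paper verifies the PDE through the Duhamel formulation (integrating the ODE and multiplying by $G^{\mathfrak m}_t*\eta_\eps$, using Chapman--Kolmogorov) rather than by direct differentiation of the product, but the two are equivalent.
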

\begin{proof}

Let us first show that if (\ref{epsilonIVP}) is well-posed, then
\begin{equation}
  v_\varepsilon(t,x) \coloneqq  \sigma_\eps(t)G^{\mathfrak{m}}_{t}*\eta_\eps(x)\label{eq:vepsexplicit}
\end{equation}
satisfies the Duhamel formula corresponding to (\ref{eq:McK-V}). Writing the integral form of (\ref{epsilonIVP}), we have
\begin{equation}
    \sigma_\eps(t) = 1 - \frac{1}{\log\varepsilon^{-1}}\int_0^t \mathbb{E}[f'(s+\eps^2, \sigma_\eps(s)G^{\mathfrak{m}}_{s}*\eta_\eps(x))]\sigma_\eps(s)\,ds.
\end{equation}
Multiplying both sides by $G^{\mathfrak{m}}_{t}*\eta_\eps(x)$ gives
\begin{equation*}
\begin{split}
  \sigma_\eps(t)&G^{\mathfrak{m}}_{t}*\eta_\eps(x) =   G_{t}^\mathfrak{m}*\eta_\eps(x)-\frac{G^{\mathfrak{m}}_{t}*\eta_\eps(x)}{\log\varepsilon^{-1}} \int_0^t \mathbb{E}[f'(s+\eps^2, \sigma_\eps(s)G^{\mathfrak{m}}_{s}*\eta_\eps(x))]\sigma_\eps(s)\,ds\\
    &=  G_{t}^\mathfrak{m}*\eta_\eps(x)-\frac{1}{\log\varepsilon^{-1}}\int_0^t \int G_{t-s}^\mathfrak{m}(x-y)\mathbb{E}[f'(s+\eps^2, \sigma_\eps(s)G^{\mathfrak{m}}_{s}*\eta(y))]
    %\\&\hspace{22em} \cdot 
    \sigma_\eps(s)G^{\mathfrak{m}}_{s}*\eta_\eps(y)\,dy\,ds,
\end{split}
\end{equation*}
where we used spatial homogeneity and the Chapman--Kolmogorov relation in the second equality. Recalling (\ref{eq:vepsexplicit}), we can rewrite this as
\begin{equation}
\label{duhamelMckean}
        v_\varepsilon(t,x) = G_t^\mathfrak{m}*\eta_\eps -\frac{1}{\log\varepsilon^{-1}}\int_0^t \int G_{t-s}^\mathfrak{m}(x-y)\mathbb{E}[f'(s+\eps^2,v_\varepsilon(s,y))]v_\varepsilon(s,y)\,dy\,ds.
\end{equation}
We recognize (\ref{duhamelMckean}) as the Duhamel formulation of (\ref{eq:McK-V}).

We now show that the initial value problem (\ref{epsilonIVP}) is well-posed. The local existence for (\ref{epsilonIVP}) follows from the fact that $-\frac{1}{\log\eps^{-1}}\mathbb{E}[f'(t+\eps^2,\sigma G_t^\mathfrak{m}*\eta_\eps(x))]\sigma$ is locally Lipschitz in $\sigma$ by our assumption that $f_1$ satisfies (\ref{S1Bd}) and $f_2$ satisfies (\ref{S2Bd}). To complete the proof, we will show that $\sigma_\eps$ remains bounded over the entire interval $[0,T]$. We first observe that
\begin{equation}
 \begin{split}
         -\frac{1}{\log\varepsilon^{-1}}\mathbb{E}[f'(t+\eps^2,\sigma_\eps(t)G^{\mathfrak{m}}_{t}*\eta_\eps(x))] &= - \frac{1}{\log\varepsilon^{-1}}\sum_{j=1}^2\mathbb{E}[f_j'(t+\eps^2,\sigma_\eps(t)G^{\mathfrak{m}}_{t}*\eta_\eps(x))]\\
     &\leq \frac{L_1(\log\varepsilon^{-1})^{-1}}{t+\varepsilon^2}.
 \end{split}
\end{equation}
As long as $\sigma_\eps$ exists, it must be positive by the comparison principle, so this implies that
\begin{equation}
    \dot{\sigma}_\eps(t)\leq \frac{L_1(\log\varepsilon^{-1})^{-1}}{t+\varepsilon^2}\sigma_\eps(t).
\end{equation}
Hence, the solution $\sigma_\eps(t)$ of (\ref{epsilonIVP}) exists for all $t\in [0,T]$, and we have the uniform in $\varepsilon$ bound
\begin{equation}
\label{alphaEpsilonBound}
    0<\sigma_\eps(t)\leq \exp\left\{\frac{L_1}{\log\varepsilon^{-1}}\int_0^t \frac{1}{s+\varepsilon^2}\,ds\right\}\leq e^{3L_1}.
\end{equation}

The fact that $v_\varepsilon$ solves (\ref{eq:McK-V}) in the sense of Definition \ref{McKeanVlasovSolDef} follows from the proof of Proposition~1.2 in \cite{GRZ24} This completes the proof. %which completes the proof of Lemma~\ref{McKeanVlasovExistence}.
\end{proof}
We conclude this section with the following remarks. The first will be used in the proof of Proposition~\ref{lipSoltnBd}, and the second shows that the McKean--Vlasov equation (\ref{eq:McK-V}) is a special case of (\ref{eq:u-f-eqn}).
    \begin{remark}
    \label{detMallDer}
        The Malliavin derivative $D_zv_\varepsilon(t,x)$ is deterministic for any $t\in \mathbb{R}_{\geq 0}$. In particular, we have
        \begin{equation}
            D_zv_\varepsilon =  \sigma_\eps(t)e^{\mathfrak{m}t}G_{t+\varepsilon^2}(x-z).
        \end{equation}
    \end{remark}
    \begin{remark}
\label{uvSame}
    Differentiating (\ref{eq:vformula}), we see that  when $f = f_1 + f_2$ with $f_1\in \bar{S}_1(L_1,L_2)$ and $f_2\in \bar{S}_2$, the McKean--Vlasov solution $v_\varepsilon$ solves 
\begin{equation}
    \begin{cases}
        \partial_t v_\eps = \frac12\Delta v_\eps + \mathfrak m v_\eps + \frac{\dot\sigma_\eps(t)}{\sigma_\eps(t)}v_\eps;\\
        v_\eps(0,x) =\eta_\eps(x).
    \end{cases}\label{eq:McKVasoriginal}
\end{equation}
The problem (\ref{eq:McKVasoriginal}) can be considered as a version of (\ref{eq:u-f-eqn}) with $f$ replaced by
\begin{align}
h_\eps(t,v) \coloneqq \frac{\dot\sigma_\eps(t-\eps^2)}{\sigma_\eps(t-\eps^2)}v &= -\frac1{\log\eps^{-1}}\mathbb{E}[f'(t,\sigma_\eps(t-\eps^2)G^{\mathfrak m}_{t-\eps^2}*\eta_\eps(x))]\notag\\
&= -\sum_{j=1}^2\frac1{\log\eps^{-1}}\mathbb{E}[f'_j(t,\sigma_\eps(t-\eps^2)G^{\mathfrak m}_{t-\eps^2}*\eta_\eps(x))],\label{eq:u-v-same}
\end{align}
where we used (\ref{epsilonIVP}). Using (\ref{alphaEpsilonBound}) and our assumptions on $f$, one can easily verify that the first term of the summation in (\ref{eq:u-v-same}) is an element of $\bar{S}_1(L_1,0)$ and the second term in the summation is an element of $\bar{S}_2$. Hence, the McKean--Vlasov equation (\ref{eq:McK-V}) is  a special case of (\ref{eq:u-f-eqn}) with a reaction term in $\bar{S}$. There is a minor subtlety in that $h_\eps$ depends (slightly) upon $\eps$, but this is not a serious issue since the bounds from Definition~\ref{def:S} on $h_\eps$ are independent of $\eps$.
\end{remark}
       \subsection{Limiting behavior: proof of Theorem~\ref{thm: LimitingOde}}
       Now that we have established the existence and uniqueness of solutions to (\ref{eq:McK-V}), we will derive a limiting problem under the stronger assumption that the nonlinearity $f$ has the self-similar form (\ref{eq:f-Theta-self-similar}). By Lemma~\ref{McKeanVlasovExistence}, it will be sufficient to consider the limiting problem for (\ref{epsilonIVP}). The requirement that $f$ be self-similar is a natural result of our logarithmic attenuation in (\ref{eq:McK-V}) which causes the initial value problem (\ref{epsilonIVP}) to have a singular limit. In fact, one might expect that the limiting problem is only well-posed when the equation (\ref{eq:McK-V}) is scale invariant, in which case we would need to take $\mathfrak{m}=0$. However, we will see in the proof that the effect of this term is weak enough on small timescales that the limit can be still be taken for any $\mathfrak{m}\in \mathbb{R}$. 
       
       We will now proceed with the proof of Theorem~\ref{thm: LimitingOde}. Let $F = F_1+F_2\in  S$,  with $F_1\in  S_1(L_1,L_2)$ and $F_2\in S_2(\gamma_1,\gamma_2,\ell_1,\ell_2)$, and let $f$ be given by (\ref{eq:f-Theta-self-similar}). We also define
\begin{equation}
    \bar{\sigma}_\eps(q) := \sigma_\eps(\varepsilon^{2-q}-\varepsilon^2)\quad\text{for}\quad q\in \left[0,2 + \frac{\log(T+\varepsilon^2)}{\log\varepsilon^{-1}}\right],
\end{equation}
where $\sigma_\eps$ is given by (\ref{epsilonIVP}). We see that
\begin{equation}
\label{rescaledODE}
    \begin{split}
        \dot{\bar{\sigma}}_\eps(q) &=  -\varepsilon^{2-q}\mathbb{E}[f'(\eps^{2-q},\bar{\sigma}_\eps(q)e^{\mathfrak{m}(\eps^{2-q}-\eps^2)}G_{\varepsilon^{2-q}}*\eta(x))]\bar{\sigma}_\eps(q)\\
        &= -\varepsilon^{2-q}\int_{-\infty}^\infty \sqrt{2\varepsilon^{2-q}}f'(\eps^{2-q}, \bar{\sigma}_\eps(q)e^{\mathfrak{m}(\eps^{2-q}-\eps^2)}w)\bar{\sigma}_\eps(q)\exp\left\{-2\pi w^2\varepsilon^{2-q}\right\}\,dw.
        \end{split}
        \end{equation}
        By (\ref{eq:f-Theta-self-similar}), this implies 
        \begin{equation*}
        \begin{split}
          \dot{\bar{\sigma}}_\eps(q)  &=  -\varepsilon^{2-q}\int_{-\infty}^\infty \sqrt{2\varepsilon^{2-q}}\frac{F'( \bar{\sigma}_\eps(q)e^{\mathfrak{m} (\eps^{2-q}-\eps^2)}w\sqrt{\varepsilon^{2-q}})}{\varepsilon^{2-q}}\bar{\sigma}_\eps(q)\exp\left\{-2\pi w^2\varepsilon^{2-q}\right\}\,dw\\        &= -\sqrt{2}\int_{-\infty}^\infty F'( \bar{\sigma}_\eps(q)e^{\mathfrak{m}(\eps^{2-q}-\eps^2)}w)\bar{\sigma}_\eps(q)\exp\left\{-2\pi w^2\right\}\,dw\\
          &= -\mathbb{E}[F'( \bar{\sigma}_\eps(q)e^{\mathfrak{m}(\eps^{2-q}-\eps^2)}G_{1}*\eta(x))]\bar{\sigma}_\eps(q),
    \end{split}
\end{equation*}
so $\bar{\sigma}_\eps$ solves the initial value problem
\begin{equation}\label{eq:sigmabarepseqn}
    \begin{cases}
        \dot{\bar{\sigma}}_\eps(q) = -\mathbb{E}[F'( \bar{\sigma}_\eps(q)e^{\mathfrak{m} (\eps^{2-q}-\eps^2)}G_{1}*\eta(x))]\bar{\sigma}_\eps(q)\\
        \bar{\sigma}_\eps(0) = 1.
    \end{cases}
\end{equation}

The case $\mathfrak m=0$ is particularly simple because in this case the problem (\ref{eq:sigmabarepseqn}) does not depend on $\eps$, and in fact already agrees with (\ref{eq:sigmabareqn}), so in this case we do not even need to take the limit $\eps\to 0$. On the other hand, for general $\mathfrak{m}\in\mathbb{R}$, $q\in (0,2)$, and $\eps$ very small, the term $e^{\mathfrak m(\eps^{2-q}-\eps^2)}$ appearing in (\ref{eq:sigmabarepseqn}) is very close to $1$. Thus, it should disappear in the limit $\eps\to 0$. Indeed, we will show that $\bar{\sigma}_\eps \to \bar\sigma$ uniformly on $[0,2]$, where $\bar\sigma$ is the unique solution to 
\begin{equation}
    \begin{cases}
    \label{limitingODE}
           \dot{\bar\sigma}(q) = -\mathbb{E}[F'( \bar\sigma(q)G_{1}*\eta(x))]\bar\sigma(q)\\
         \bar\sigma(0) = 1.
    \end{cases}
\end{equation}

The fact that $(\ref{limitingODE})$ is well-posed follows from an argument similar to the one in the proof of Lemma~\ref{McKeanVlasovExistence}. We now observe that
\begin{equation}
    \begin{split}
        |\bar{\sigma}_\eps(q)-\bar\sigma(q)|&\leq  \int_0^q \Big{|}  \mathbb{E}[F'( \bar{\sigma}_\eps(r)e^{\mathfrak{m} (\eps^{2-r}-\eps^2)}G_{1}*\eta(x))]\bar{\sigma}_\eps(r)%\\&\qquad\qquad 
        - \mathbb{E}[F'( \bar\sigma(r)G_{1}*\eta(x))]\bar\sigma(r)\Big{|}\,dr\\
        &\leq \int_0^q \left|  \mathbb{E}[F'( \bar{\sigma}_\eps(r)e^{\mathfrak{m} (\eps^{2-r}-\eps^2)}G_{1}*\eta(x))]\right||\bar{\sigma}_\eps(r)-\bar\sigma(r)|\,dr\\
        &\qquad+\int_0^q \left|  \mathbb{E}[F'( \bar{\sigma}_\eps(r)e^{\mathfrak{m}(\eps^{2-r}-\eps^2)}G_{1}*\eta(x))-F'( \bar\sigma(r)G_{1}*\eta(x))]\right||\bar\sigma(r)|\,dr.
    \end{split}
    \label{eq:sigmaerrbd}
\end{equation}
For the first term, we note that because $0< \sigma_\eps\leq e^{3L_1}$, we have
\begin{equation}
\label{limitingODEBd1}
    \begin{split}
         &\left|  \mathbb{E}[F'( \bar{\sigma}_\eps(r)e^{\mathfrak{m} \varepsilon^{2-r}}G_{1}*\eta(x))]\right| =  \sum_{j=1}^2\left|  \mathbb{E}[F_j'( \bar{\sigma}_\eps(r)e^{\mathfrak{m} (\eps^{2-r}-\eps^2)}G_{1}*\eta(x))]\right|\\
         &\qquad\leq L_1+\ell_1 + \bar{\sigma}_\eps(r)^{\gamma_1}e^{\gamma_1|\mathfrak{m}|\varepsilon^{2-r}}\mathbb{E}[(G_1*\eta(x))^{\gamma_1}]%\\&
         \leq C_{1},
    \end{split}
\end{equation}
where $C_{1}$ is a constant that depends only on $\mathfrak{m}$, $L_1$, $\ell_1$, $\gamma_2$, and $T$.
For the second term on the right side of (\ref{eq:sigmaerrbd}), we first observe that
\begin{equation}
\begin{aligned}
    |\bar{\sigma}_\eps(r)e^{\mathfrak{m}\varepsilon^{2-r}}G_1*\eta(x)&- \bar\sigma(r)G_1*\eta(x)|\\&\leq  \left(e^{\mathfrak{m}(\varepsilon^{2-r}-\eps^2)}|\bar{\sigma}_\eps(r)-\bar\sigma(r)| +|\bar\sigma(r)||1-e^{\mathfrak{m}(\varepsilon^{2-r}-\eps^2)}|\right)G_1*\eta(x)\\
   & \leq   (G_1*\eta(x)) \left(e^{\mathfrak{m}(\varepsilon^{2-r}-\eps^2)}|\bar{\sigma}_\eps(r)-\bar\sigma(r)| +|\bar\sigma(r)|(e^{|\mathfrak{m}|\varepsilon^{2-q}}-1)\right).
    \end{aligned}
\end{equation}
Next, we see that
\begin{equation}
  \begin{split}
 \label{limitingODEBd2}
 \sum_{j=1}^2 &|  \mathbb{E}[F_j'( \bar{\sigma}_\eps(r)e^{\mathfrak{m} \varepsilon^{2-r}}G_{1}*\eta(x))-F_j'( \bar\sigma(r)G_{1}*\eta(x))]| \\&\leq  \left(e^{\mathfrak{m}\varepsilon^{2-r}}|\bar{\sigma}_\eps(r)-\bar\sigma(r)| +e^{3L_1}(e^{|\mathfrak{m}|\varepsilon^{2-q}}-1)\right)\\&\hspace{4em}\cdot(L_2+\ell_2)\left(\mathbb{E}[| (G_1*\eta(x)) |] + 2e^{\gamma_2(3L_1+|\mathfrak{m}|T)}\mathbb{E}[(G_1*\eta(x))^{1+\gamma_2}]
      \right)\\
      &\leq C_2\left(e^{\mathfrak{m}\varepsilon^{2-r}}|\bar{\sigma}_\eps(r)-\bar\sigma(r)| +e^{3L_1}(e^{|\mathfrak{m}|\varepsilon^{2-q}}-1)\right),
 \end{split}
           \end{equation}
where $C_{2}$ is a constant that depends only on $\mathfrak{m}$, $L_1$, $L_2$, $\ell_2$, $\gamma_2$, and $T$. Then combining (\ref{limitingODEBd1}) and (\ref{limitingODEBd2}), we have
\begin{equation}
    \begin{split}
        |\bar{\sigma}_\eps(q)-\bar\sigma(q)|\leq C_3e^{3L_1}(e^{|\mathfrak{m}|\varepsilon^{2-q}}-1)
        +
        C_3\int_0^q |\bar{\sigma}_\eps(r)-\bar\sigma(r)|\,dr,
    \end{split}
\end{equation}
where $C_3 = C_1 +  e^{|\mathfrak{m}|}L_2C_2$. Applying Gr\"onwall's inequality gives
\begin{equation}
    |\bar{\sigma}_\eps(q)-\bar\sigma(q)|\leq  C_3e^{3L_1}(e^{|\mathfrak{m}|\varepsilon^{2-q}}-1)e^{C_3 q},
\end{equation}
Taking $\varepsilon$ to zero gives $\lim_{\varepsilon\to 0}\bar{\sigma}_\eps(q) = \bar\sigma(q)$ for any $0\leq q<2$. Because $\bar\sigma$ is continuous, we can conclude that $\lim_{\varepsilon\to 0} \bar{\sigma}_\eps(2) = \bar\sigma(2)$ if we show that $\{\bar{\sigma}_\eps\}_\varepsilon$ is uniformly Lipschitz. This follows from the fact that 
\begin{equation}
\label{eq:limit-eq-unif-lip}
    \begin{split}
        |\dot{\bar{\sigma}}_\eps(q)|&\leq e^{3L_1}\sum_{j=1}^2 |\mathbb{E}[F_j'( \bar{\sigma}_\eps(q)e^{\mathfrak{m} (\varepsilon^{2-q}-\eps^2)} G_1*\eta(x))]|\\
        &\leq e^{3L_1}(L_1+\ell_1) +e^{3L_1+\gamma_1|\mathfrak{m}|T}|\bar{\sigma}_\eps(q)|^{\gamma_1}\mathbb{E}[(G_1*\eta(x))^{\gamma_1}]\\
        &\leq C_{4},
    \end{split}
\end{equation}
where $C_4$ depends only on $\mathfrak{m}$, $\ell_1,L_1,$ and $T$. This implies that $\bar \sigma_\eps $ converges to $\bar \sigma$ uniformly on $[0,2]$, which completes the proof of Theorem \ref{thm: LimitingOde}.

\section{The Malliavin derivative}
    \label{prelimEstimates}
 %\label{MallCalcDef}
 An important tool in our proof will be the Malliavin calculus. It is convenient to consider the Malliavin differentiated version of (\ref{eq:u-f-eqn}) because, although the initial data for (\ref{eq:u-f-eqn}) is a Gaussian process and thus does not have a sign, the Malliavin derivative is strictly positive. This allows for the use of techniques for parabolic equations like the maximum principle that will help us derive decorrelation estimates for our solutions.

 Let us recall some basic Malliavin calculus definitions from \cite{MR2200233}. We refer to that work for more background.
         We define the spatial white noise $W$ to be Gaussian process indexed by $L^2(\mathbb{R}^2)$ defined so that for any $h_1,h_2\in L^2(\mathbb{R}^2)$, we have
    \begin{equation}
        \mathbb{E}[W(h_1)W(h_2)] = \langle h_1,h_2\rangle_{L^2(\mathbb{R}^2)}.
    \end{equation}
    We will often use the notation $\int h(x) \eta(x)\,dx \coloneqq W(h)$, so that
    \begin{equation}
        \mathbb{E}\left[\left(\int h_1(x)\eta(x)\,dx\right)\left(\int h_2(y)\eta(y)\,dy\right)\right] = \int h_1(x)h_2(x)\,dx.
    \end{equation}
    We define a set $\mathcal{S}$ of ``smooth'' random variables $G$ that take the form 
        \begin{equation}
            G = g(W(h_1),...,W(h_n)),
        \end{equation}
        for some $h_1,...,h_n\in L^2(\mathbb{R}^2)$ and some function $g\in C^\infty(\mathbb{R}^2)$ such that $g$ and all of its partial derivatives have at most polynomial growth at infinity.
        The Malliavin derivative $DG$ of a smooth random variable $G\in \mathcal{S}$ is the $L^2(\mathbb{R}^2)$-valued random variable 
        \begin{equation}
            DG = \sum_{j=1}^n \partial_jg(W(h_1),...,W(h_n))h_j.
        \end{equation}
        For $p\in[1,\infty)$, we let $\mathbb{D}^{1,p}$ denote  the closure of $\mathcal{S}$ with respect to the norm
        \begin{equation}
            \|G\|_{1,p} \coloneqq \left(\mathbb{E}[|G|^p]+\mathbb{E}[|DG|_{L^2(\mathbb{R}^2)}^p]\right)^{1/p},
        \end{equation}
       and we extend the definition of $DG$ to all $G\in\mathbb{D}^{1,p}$ (for any $p\in [1,\infty)$) by density.
    We will also let $\{D_zG\}_{z\in \mathbb{R}^2}$ be the random field associated to $DF$, which is given by the natural identification between $L^2(\Omega;L^2(\mathbb{R}^2))$ and $L^2(\mathbb{R}^2\times \Omega)$.

    In this section, we will derive an \emph{a priori} estimate on the Malliavin derivative of solutions to (\ref{eq:u-f-eqn}) which will be used throughout the paper. This estimate will allow us to use the decorrelation provided by our initial condition. Because we derive a deterministic bound, it also implies a concentration result and a corresponding moment bound for our solutions.
    \begin{lemma}
        \label{ptwMallbd}
        Suppose the nonlinearity $f\in\bar S$ can be written as $f=f_1+f_2$ with $f_1\in \bar{S}_1(L_1,L_2)$ and $f_2\in \bar{S}_2$. Then for all $t\in [0,T]$, $x\in \mathbb{R}$, $p\in [1,\infty)$, we have $u_\eps(t,x)\in \mathbb{D}^{1,p}$, and the Malliavin derivative satisfies the PDE
        \begin{equation}
          \label{mallDerivAllenCahn}
           \begin{cases}
                \partial_t D_zu_\varepsilon = \frac{1}{2}\Delta u_\varepsilon + \mathfrak{m}D_zu_\varepsilon - \frac{1}{\log\varepsilon^{-1}}f'(t+\varepsilon^2,u_\varepsilon)D_zu_\varepsilon;\\
                D_zu_\varepsilon(0,x) = G_{\varepsilon^2}(x-z).
           \end{cases}
        \end{equation}
    Moreover, if $\varepsilon$ is sufficiently small (the required smallness depending only on $T$), we have the Malliavin derivative bounds
        \begin{equation}
        \label{eq:ptw-Mall-bd}
        0\leq D_zu_\varepsilon(t,x),D_zv_\varepsilon(t,x)\leq e^{3L_1+\mathfrak{m}t}G_{t+\varepsilon^2}(x-z)\qquad\text{for all $t\in [0,T]$ and $x\in\mathbb{R}^2$.}
        \end{equation}    \end{lemma}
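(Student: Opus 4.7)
The plan is to proceed by a truncation-and-limit argument, combined with the maximum principle for the Malliavin-differentiated equation. First, because the nonlinearity $f$ may grow polynomially (the $\bar S_2$ part), I would introduce a sequence of cut-off nonlinearities $f^{(N)}(t,u) = f_1(t,u) + f_2(t,\chi_N(u))$, where $\chi_N$ is a smooth truncation at level $N$, so that each $f^{(N)}$ is globally Lipschitz (with respect to $u$) and continuously differentiable. For each such $f^{(N)}$, classical Malliavin calculus (see \cite{MR2200233}) applied to the Duhamel formulation of (\ref{eq:u-f-eqn}) via a Picard iteration gives that the corresponding solution $u_\eps^{(N)}$ lies in $\mathbb{D}^{1,p}$ for every $p\in[1,\infty)$, and differentiating the Duhamel formula formally and applying the chain rule identifies the Malliavin derivative $D_z u_\eps^{(N)}$ as the mild solution to the linear PDE
\begin{equation*}
\partial_t D_z u_\eps^{(N)} = \tfrac12 \Delta D_z u_\eps^{(N)} + \mathfrak m D_z u_\eps^{(N)} - \tfrac{1}{\log\eps^{-1}}(f^{(N)})'(t+\eps^2,u_\eps^{(N)})D_z u_\eps^{(N)}
\end{equation*}
with initial datum $G_{\eps^2}(\,\cdot\,-z)$.

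With the Malliavin PDE in hand, the estimates follow from the maximum principle (e.g.\ \cite[Theorem~9 of Chapter~2]{MR181836}), in the same spirit as the sub-/super-solution construction already used in the proof of Lemma~\ref{MckeanUnique}. The lower bound $D_z u_\eps^{(N)} \ge 0$ is immediate since the initial datum $G_{\eps^2}(\,\cdot\,-z)$ is nonnegative and the equation is linear with bounded potential. For the upper bound, I would use that (\ref{eq:S2derivpos}) implies $f_2'\ge 0$, while the Lipschitz condition on $f_1$ from (\ref{S1Bd}) gives $f_1'(t+\eps^2,\,\cdot\,)\ge -L_1/(t+\eps^2)$; combining these yields
\begin{equation*}
-\tfrac{1}{\log\eps^{-1}} (f^{(N)})'(t+\eps^2,u_\eps^{(N)}) \le \tfrac{L_1}{(t+\eps^2)\log\eps^{-1}}.
\end{equation*}
Thus the explicit supersolution
\begin{equation*}
w(t,x) \coloneqq \exp\!\Bigl(\mathfrak m t + \tfrac{L_1}{\log\eps^{-1}}\log\tfrac{t+\eps^2}{\eps^2}\Bigr) G_{t+\eps^2}(x-z)
\end{equation*}
solves an equation that dominates the Malliavin PDE, and by the comparison principle $D_z u_\eps^{(N)}\le w$. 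For $\eps$ small enough that $\log((T+\eps^2)/\eps^2)\le 3\log\eps^{-1}$, the prefactor is bounded by $e^{3L_1+\mathfrak m t}$, yielding the stated bound (\ref{eq:ptw-Mall-bd}) uniformly in $N$.

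Passing to the limit $N\to\infty$ would then use that the uniform bound on $\|D_z u_\eps^{(N)}\|_{L^2(\mathbb{R}^2;L^p(\mathbb{P}))}$, together with $L^p$-convergence $u_\eps^{(N)}\to u_\eps$ (which follows from the a priori moment bounds analogous to (\ref{eq:a-priori-momentbd}) and the uniqueness of the nonlinear PDE), implies $u_\eps\in\mathbb{D}^{1,p}$ with $D_z u_\eps$ the weak limit of $D_z u_\eps^{(N)}$ by the closure of the Malliavin derivative operator. The pointwise bound (\ref{eq:ptw-Mall-bd}) transfers to the limit. Finally, the bound on $D_z v_\eps$ is immediate from Remark~\ref{detMallDer} together with the uniform estimate $0<\sigma_\eps\le e^{3L_1}$ from (\ref{alphaEpsilonBound}).

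The main obstacle I anticipate is not the maximum-principle step, which is essentially algebraic once the right supersolution is guessed, but rather the approximation argument: ensuring that the truncation $f^{(N)}$ can be chosen to preserve the sign conditions (in particular $f_2'\ge 0$ after truncation) so that the upper bound $e^{3L_1+\mathfrak m t}G_{t+\eps^2}(x-z)$ holds uniformly in $N$, and then justifying the passage to the limit in $\mathbb{D}^{1,p}$. A careful choice of the cutoff $\chi_N$ (for instance, a smooth monotone approximation of the identity that leaves $f_2$ monotone after composition) should resolve this issue without difficulty.
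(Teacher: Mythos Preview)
Your proposal is correct and follows essentially the same approach as the paper: truncate to a globally Lipschitz nonlinearity, obtain the Malliavin PDE via Picard iteration and the chain rule, apply the maximum/comparison principle with exactly the supersolution you wrote down, and then pass to the limit using the closedness of the Malliavin derivative; the bound for $v_\eps$ is likewise derived from Remark~\ref{detMallDer} and (\ref{alphaEpsilonBound}). The only point the paper makes slightly more explicit is that the truncation should be chosen so that the approximating $f_2$ remains in $\bar S_2$ (preserving monotonicity), which is precisely the concern you raise at the end, and the paper also cites a more general chain rule \cite[Lemma~4.3]{MR4563698} to justify (\ref{mallDerivAllenCahn}) for the limiting (non-Lipschitz) $f$.
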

    \begin{proof}
     We begin by considering the case when, in addition to the assumptions above, $f$ is \emph{globally} Lipschitz (but we do not impose a quantitative bound on the global Lipschitz constant). In this case, it follows from standard Picard iteration arguments that $u_\eps(t,x)\in \mathbb{D}^{1,p}$ for any $p\in [1,\infty)$, and the PDE (\ref{mallDerivAllenCahn}) is obtained by applying $D_z$ to both sides of the Duhamel formula for $u_\eps$ and then applying the chain rule \cite[Proposition 1.2.3]{MR2200233}. Given the regularity of our initial data in (\ref{mallDerivAllenCahn}) and our assumptions on $f$, it is standard that the mild solution we obtain is in fact a classical solution of (\ref{mallDerivAllenCahn}). 

     Now we will prove the bound (\ref{eq:ptw-Mall-bd}) in this case. The positive initial data in  (\ref{mallDerivAllenCahn}), together with our assumptions that $f_1'$ is bounded and $f_2$ is increasing, implies by the maximum principle that for all $t\in [0,T]$ and $x\in \mathbb{R}^2$, we have
        \begin{equation} D_zu_\varepsilon\ge 0.
        \end{equation}
        Furthermore, these assumptions imply that
        \begin{equation*}
          \begin{split}
                 \partial_t D_zu_\varepsilon &= \frac{1}{2}\Delta D_zu_\varepsilon + \mathfrak{m} D_zu_\varepsilon - \frac{1}{\log\varepsilon^{-1}}f_1'(t+\eps^2,u_\varepsilon)D_zu_\varepsilon -  \frac{1}{\log\varepsilon^{-1}}f_2'(t+\eps^2,u_\varepsilon)D_zu_\varepsilon\\
                 &\leq  \frac{1}{2}\Delta D_zu_\varepsilon + \mathfrak{m} D_zu_\varepsilon + \frac{1}{\log\varepsilon^{-1}}\left(\frac{L_1}{t+\varepsilon^2}\right)D_zu_\varepsilon.
               \end{split}
               \end{equation*}
        Then applying the comparison principle gives
        \begin{equation}\label{eq:Dzalmostdone}
           D_zu_\varepsilon(t,x)\leq\exp\left\{\frac{1}{\log\varepsilon^{-1}}\int_0^t \frac{L_1}{s+\varepsilon^2}\,ds\right\}e^{\mathfrak{m}t}G_{t+\varepsilon^2}(x-z) .
        \end{equation}
        Finally, taking $\varepsilon$ sufficiently small depending only on $T$ implies
        \begin{equation*}
            \exp\left\{\frac{1}{\log\varepsilon^{-1}}\int_0^t \frac{L_1}{s+\varepsilon^2}\,ds\right\} = \exp\left\{\frac{\log\left(\frac{T+\varepsilon^2}{\varepsilon^2}\right)}{\log\varepsilon^{-1}}L_1\right\}\leq e^{3L_1},
        \end{equation*}
      and using this in (\ref{eq:Dzalmostdone}) gives (\ref{eq:ptw-Mall-bd}).

     In the general case when $f$ is not globally Lipschitz, the result can be proved by approximation. The method is fairly standard so we simply sketch the proof. First, we can approximate $f_2$ by a globally Lipschitz element of $\bar S_2$ and show that the resulting approximations $u_{\eps,n}(t,x)$ converge to $u_\eps(t,x)$ in $L^p(\mathbb{P})$ for any $p\in[1,\infty)$. The bound (\ref{mallDerivAllenCahn}) applies to the approximations and shows that $D_zu_{\eps,n}$ is uniformly bounded in $L^p(\mathbb{P})$ as well. Then the fact that $u_\eps(t,x)\in \mathbb{D}^{1,p}$ follows from \cite[Lemma~1.5.3]{MR2200233}. The PDE (\ref{mallDerivAllenCahn}) and resulting bound (\ref{eq:ptw-Mall-bd}) then follows as in the previous case, where we use the more general chain rule \cite[Lemma~4.3]{MR4563698}. The case for $v_\eps$ can be seen either directly from Remark \ref{detMallDer}, or from the fact that the McKean--Vlasov equation is a special case of (\ref{eq:u-f-eqn}). (See Remark \ref{uvSame}.)
    \end{proof}
    It is well-known that an almost sure bound on the $L^2(\mathbb{R}^2)$ norm of the Malliavin derivative implies a sub-Gaussian tail bound on the random variable, which in turn implies moment bounds for all orders. (See \cite{MR2568291} and \cite{MR2329681}.)  By Lemma \ref{ptwMallbd}, we have 
        \begin{equation}
                |Du_\varepsilon(t,x)|_{L^2(\mathbb{R}^2)}^2
                \le \frac{ e^{2|\mathfrak{m}|t+6L_1}}{t+\varepsilon^2}\quad\text{a.s.},
            \end{equation}
         which then implies the following concentration result.
    \begin{lemma}
            \label{concentrationIneq}
            Suppose the nonlinearity $f\in \bar S$ can be written as $f=f_1 +f_2$ with $f_1\in \bar{S}_1(L_1,L_2)$ and $f_2\in \bar{S}_2$. Then, for all $p\in [1,\infty)$ and $T\in [0,\infty)$, we have a constant $K_{p,T}<\infty$ such that %, for any $T\in (0,\infty)$, we have the concentration inequality
            \begin{equation}
                \mathbb{P}(|u_\varepsilon(t,x)|\geq \theta) ,\mathbb{P}(|v_\varepsilon(t,x)|\geq \theta)
                \leq 2\exp\left\{-\frac{\theta^2(t+\varepsilon^2)}{2e^{{2|\mathfrak{m}|t}+6L_1}}\right\}
            \end{equation}
            and
            \begin{equation}\label{eq:momentbd}
                    \mathbb{E}[|u_\varepsilon(t,x)|^{p}] \leq \frac{K_{p,T}}{(t+\varepsilon^2)^{p/2}}.
            \end{equation}
            for all $t\in (0,T]$ and 
            %$t\in [0,T]$ and
            $x\in \mathbb{R}^2$.
        \end{lemma}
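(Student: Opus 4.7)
The plan is to obtain both the concentration bound and the moment bound as consequences of the pointwise Malliavin derivative estimate from Lemma~\ref{ptwMallbd}, together with the standard sub-Gaussian concentration inequality for Malliavin-differentiable random variables (as given in the cited references \cite{MR2568291,MR2329681}), which states that if $F\in\mathbb{D}^{1,2}$ satisfies $\mathbb{E}[F]=0$ and $|DF|_{L^2(\mathbb{R}^2)}\le c$ almost surely, then $\mathbb{P}(|F|\ge \theta)\le 2\exp(-\theta^2/(2c^2))$.

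First, I would verify that $u_\eps(t,x)$ is centered. Since $f$ is odd in the second argument (by the hypothesis that $F_1$ and $F_2$ are both odd, which is preserved under the scaling (\ref{eq:f-Theta-general})), the function $-u_\eps$ solves (\ref{eq:u-f-eqn}) with initial data $-\eta_\eps$. Because $\eta$ and $-\eta$ agree in law, the uniqueness part of the solution theory gives that $u_\eps(t,x)$ and $-u_\eps(t,x)$ have the same distribution, hence $\mathbb{E}[u_\eps(t,x)]=0$. For $v_\eps$, the explicit formula (\ref{eq:v-explicit}) shows it is a centered Gaussian directly.

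Next, I would compute the almost-sure $L^2(\mathbb{R}^2)$ norm of $Du_\eps(t,x)$ using the pointwise bound (\ref{eq:ptw-Mall-bd}). A direct Gaussian integral gives
\begin{equation}
\int_{\mathbb{R}^2}G_{t+\eps^2}(x-z)^2\,dz = \frac{1}{4\pi(t+\eps^2)},
\end{equation}
and hence, using $e^{\mathfrak m t}\le e^{|\mathfrak m|t}$,
\begin{equation}
|Du_\eps(t,x)|_{L^2(\mathbb{R}^2)}^2 \le \frac{e^{2|\mathfrak m|t+6L_1}}{4\pi(t+\eps^2)}\qquad\text{a.s.}
\end{equation}
Applying the sub-Gaussian concentration inequality cited above with $c^2$ equal to the right-hand side of the display yields the concentration bound (up to the harmless absorption of the factor $1/(4\pi)$ into the constant, which matches the stated form).

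Finally, I would derive the moment bound (\ref{eq:momentbd}) from the concentration bound by integrating the tail, using the layer cake formula
\begin{equation}
\mathbb{E}[|u_\eps(t,x)|^p] = p\int_0^\infty \theta^{p-1}\mathbb{P}(|u_\eps(t,x)|\ge \theta)\,d\theta,
\end{equation}
and the substitution $\theta = s(t+\eps^2)^{-1/2}$, which yields $\mathbb{E}[|u_\eps(t,x)|^p]\le C_p e^{p(|\mathfrak m|T+3L_1)}(t+\eps^2)^{-p/2}$, giving the claim with $K_{p,T} = C_p e^{p(|\mathfrak m|T+3L_1)}$. The argument for $v_\eps$ is identical, or alternatively one may invoke the explicit Gaussian representation (\ref{eq:v-explicit}) together with the bound $\sigma_\eps(t)\le e^{3L_1}$ from (\ref{alphaEpsilonBound}) and apply standard Gaussian tail estimates directly.

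There is no substantial obstacle here; the result is essentially a corollary of Lemma~\ref{ptwMallbd}. The only points requiring mild care are (i) the symmetry argument establishing that $u_\eps$ is centered (so that one may apply the concentration inequality without a correction term), and (ii) the sign conventions replacing $\mathfrak m$ by $|\mathfrak m|$ to obtain a uniform bound valid for both signs of the mass parameter.
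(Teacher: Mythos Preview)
Your proposal is correct and follows essentially the same route as the paper: bound $|Du_\eps(t,x)|_{L^2(\mathbb{R}^2)}$ almost surely via Lemma~\ref{ptwMallbd}, invoke the standard sub-Gaussian concentration inequality for Malliavin-differentiable functionals, and extract moments by integrating the tail. You are in fact more careful than the paper in one respect: you supply the symmetry argument showing $\mathbb{E}[u_\eps(t,x)]=0$, which is needed to apply the concentration inequality in the stated form but is left implicit in the paper's presentation.
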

        The bound (\ref{eq:momentbd}) strengthens (\ref{eq:a-priori-momentbd}): we see now that $K_{p,T}$ need not depend on $\eps$.

\section{Gr\"onwall-type estimates}
\label{GronwallSection}
In this section, we provide the Gr\"onwall-type inequalities which will be used to prove Corollaries~\ref{iterativeMallBd},~\ref{itrLipSolBd}, and~\ref{contractingItrSolBd} below. These estimates are similar to those derived in \cite[Lemmas~4.3 and~5.2]{MR4413215} with slight modifications to fit this context. Throughout this section, the time horizon $T\in (0,\infty)$ and the parameter $\mathfrak{m}\in\mathbb{R}$ are fixed, and we recall the definitions of the time steps $t_m$ and the exponential-scale mesh size $\delta_\eps$ from Section~\ref{RelTimeScale}.
    \begin{lemma}
    \label{Gronwall1}
        Let $t^* \leq t_{m+1}$ and $\theta\in L^\infty([t_m,t_{m+1}]\times \mathbb{R}^2)$. Now suppose that $\theta$ satisfies the following integral inequality on $[t_m,t^*]$:
        \begin{equation}
        \label{G1IntIneq}
            \theta(t,x)\leq \alpha e^{\mathfrak{m}t}G_{t+\varepsilon^2}(x-z) + \frac{\beta}{\log\varepsilon^{-1}}\int_{t_m}^t \frac{1}{s+\varepsilon^2}\int G^\mathfrak{m}_{t-s}(x-y)\theta(s,y)\,dy\,ds.
        \end{equation}
        Then for $\varepsilon$ sufficiently small so that $\delta_\varepsilon < \beta^{-1}e^{-|\mathfrak{m}|T}$  , we have
        \begin{equation}
        \label{eq:Gronwall1}
            \theta(t,x)\leq \frac{\alpha e^{\mathfrak{m}t}}{1-\delta_\varepsilon \beta}G_{t+\varepsilon^2}(x-z)
        \end{equation}
        for all $t\in [t_m,t^*]$ and $x\in \mathbb{R}^2$.
    \end{lemma}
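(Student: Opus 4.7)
The plan is to iterate the integral inequality (\ref{G1IntIneq}), exploiting the semigroup (Chapman--Kolmogorov) property of the heat kernel to show that the Gaussian profile $\Phi(t,x) \coloneqq e^{\mathfrak{m}t}G_{t+\varepsilon^2}(x-z)$ is essentially preserved by the linear integral operator
\begin{equation*}
T\psi(t,x) \coloneqq \frac{\beta}{\log\varepsilon^{-1}}\int_{t_m}^t \frac{1}{s+\varepsilon^2}\int G^{\mathfrak{m}}_{t-s}(x-y)\psi(s,y)\,dy\,ds,
\end{equation*}
so that the hypothesis (\ref{G1IntIneq}) reads $\theta \leq \alpha\Phi + T\theta$ pointwise on $[t_m,t^*]\times\mathbb{R}^2$.

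The first computation I would carry out is $T\Phi \leq \delta_\varepsilon\beta\,\Phi$. This follows from the heat semigroup identity $G_{t-s}*G_{s+\varepsilon^2} = G_{t+\varepsilon^2}$ (together with $e^{\mathfrak{m}(t-s)}e^{\mathfrak{m}s}=e^{\mathfrak{m}t}$), which collapses the inner spatial integral to $\Phi(t,x)$, combined with the logarithmic bound
\begin{equation*}
\int_{t_m}^{t}\frac{ds}{s+\varepsilon^2} \leq \log\frac{t_{m+1}+\varepsilon^2}{t_m+\varepsilon^2} = \delta_\varepsilon\log\varepsilon^{-1}
\end{equation*}
afforded by (\ref{eq:reasonfortms}) for $t\in[t_m,t_{m+1}]$.

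Iterating the pointwise inequality $\theta \leq \alpha\Phi + T\theta$ exactly $k$ times and using linearity and monotonicity of $T$ together with the previous step produces
\begin{equation*}
\theta \leq \alpha\Phi\sum_{j=0}^{k-1}(\delta_\varepsilon\beta)^j + T^k\theta.
\end{equation*}
To dispose of the remainder I would invoke the \emph{a priori} hypothesis $\theta\in L^\infty([t_m,t_{m+1}]\times\mathbb{R}^2)$. Setting $M\coloneqq\|\theta\|_{L^\infty}$, the identity $\int G^{\mathfrak{m}}_{t-s}(x-y)\,dy = e^{\mathfrak{m}(t-s)}\leq e^{|\mathfrak{m}|T}$ together with the same logarithmic integral estimate yields $\|T\psi\|_{L^\infty}\leq \delta_\varepsilon\beta e^{|\mathfrak{m}|T}\|\psi\|_{L^\infty}$ for every bounded $\psi$, and hence $\|T^k\theta\|_{L^\infty}\leq M(\delta_\varepsilon\beta e^{|\mathfrak{m}|T})^k$. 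Under the hypothesis $\delta_\varepsilon < \beta^{-1}e^{-|\mathfrak{m}|T}$ this vanishes as $k\to\infty$, and since $\delta_\varepsilon\beta<1$ as well, the geometric series sums to $(1-\delta_\varepsilon\beta)^{-1}$, giving (\ref{eq:Gronwall1}).

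The main delicate point is the asymmetry between the two estimates on $T$: the bound $T\Phi\leq\delta_\varepsilon\beta\,\Phi$ is exact with respect to the Gaussian profile and picks up no $e^{|\mathfrak{m}|T}$ factor, whereas the crude $L^\infty$ bound used to discard $T^k\theta$ necessarily does. This is precisely what explains the strength of the hypothesis on $\delta_\varepsilon$: a weaker $\delta_\varepsilon\beta<1$ would suffice for the geometric series, but the stronger $\delta_\varepsilon\beta e^{|\mathfrak{m}|T}<1$ is needed to control the remainder. Everything else is algebra.
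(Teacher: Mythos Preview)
Your proof is correct and follows essentially the same approach as the paper: both iterate the integral inequality, use Chapman--Kolmogorov to show the Gaussian profile is preserved up to the factor $\delta_\varepsilon\beta$, and use the crude $L^\infty$ bound (picking up the extra $e^{|\mathfrak{m}|T}$) to kill the remainder. Your operator-theoretic packaging via $T$ is slightly cleaner than the paper's explicit ansatz $\theta \le A\,e^{\mathfrak{m}t}G_{t+\varepsilon^2}(x-z) + B$ with recursively updated $A^{(n)},B^{(n)}$, but the content is identical.
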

    \begin{proof}
        Suppose that for some $A$ and $B$, we have the bound
        \begin{equation}
        \label{G1IndHyp}
            \theta(t,x)\leq Ae^{\mathfrak{m}t}G_{t+\varepsilon^2}(x-z) + B
        \end{equation}
        for all $t\in [t_m,t^*]$ and $x\in \mathbb{R}^2$. We note that (\ref{G1IndHyp}) holds for $A=0$ and $B = |\theta|_{L^\infty}$. Substituting (\ref{G1IndHyp}) into (\ref{G1IntIneq}) gives
        \begin{equation}
        \label{G1IterBd}
            \begin{split}
                \theta(t,x)&\leq \alpha e^{\mathfrak{m}t}G_{t+\varepsilon^2}(x-z) + \frac{A\beta}{\log\varepsilon^{-1}}\int_{t_m}^t \frac{1}{s+\varepsilon^2}\int G^\mathfrak{m}_{t-s}(x-y)e^{\mathfrak{m}s}G_{s+\varepsilon^2}(y-z)\,dy\,ds\\
                &\quad + \frac{B\beta}{\log\varepsilon^{-1}}\int_{t_m}^t \frac{1}{s+\varepsilon^2}\int G^\mathfrak{m}_{t-s}(x-y)\,dy\,ds\\
                &\leq \alpha e^{\mathfrak{m}t}G_{t+\varepsilon^2}(x-z) + \frac{A\beta e^{\mathfrak{m}t}}{\log\varepsilon^{-1}}G_{t+\varepsilon^2}(x-z)\int_{t_m}^{t_m+1}\frac{1}{s+\varepsilon^2}\,ds
                \\&\quad+ \frac{B\beta e^{|\mathfrak{m}|T}}{\log\varepsilon^{-1}}\int_{t_m}^{t_{m+1}}\frac{1}{s+\varepsilon^2}\,ds\\
                &= (\alpha + A\beta \delta_\varepsilon)e^{\mathfrak{m}t}G_{t+\varepsilon^2}(x-z) + B\beta e^{|\mathfrak{m}|T}\delta_\varepsilon.
            \end{split}
        \end{equation}
        Now, set $A^{(0)}=0$, $B^{(0)}=|\theta|_{L^\infty}$, and inductively define $A^{(n+1)}= \alpha + A^{(n)}\beta\delta_\varepsilon$ and $B^{(n+1)}=B^{(n)}\beta e^{|\mathfrak{m}|T}\delta_\varepsilon$. Then (\ref{G1IterBd}) implies that for all $n\in \mathbb{N}$, we have
        \begin{equation}
            \theta(t,x)\leq \alpha e^{\mathfrak{m}t}G_{t+\varepsilon^2}(x-z)\sum_{j=0}^{n-1}(\beta\delta_\varepsilon)^{j}  + |\theta|_{L^\infty}(\beta e^{|\mathfrak{m}|T}\delta_\varepsilon)^n.
        \end{equation}
        Taking $\varepsilon$ sufficiently small so that $\delta_\varepsilon < \beta^{-1}e^{-|\mathfrak{m}|T}$ and then taking $n\to \infty$ gives (\ref{eq:Gronwall1}).
    \end{proof}

    \begin{lemma}
    \label{Gronwall2}
        Suppose that $\theta$ is continuous and satisfies the following integral inequality on $[t_m,t_{m+1}]$:
        \begin{equation}
        \label{G2Intineq}
            \theta(t)\leq \frac{\alpha}{\sqrt{t+\varepsilon^2}} + \frac{\beta}{(\log\eps^{-1})\sqrt{t+\varepsilon^2}}\int_{t_m}^t \frac{\theta(s)}{\sqrt{s+\varepsilon^2}}\,ds.
        \end{equation}
        Then for $\varepsilon$ sufficiently small so that $\delta_\varepsilon < \beta^{-1}$, we have the bound
        \begin{equation}
        \label{eq:Gronwall2}
        \theta(t)\leq\frac{\alpha}{(1-\beta\delta_\varepsilon)\sqrt{t+\varepsilon^2}}\qquad\text{for all $t\in [t_m,t_{m+1}]$}.
        \end{equation}
    \end{lemma}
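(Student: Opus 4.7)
The plan is to prove this by iterative bootstrapping, exactly paralleling the argument used for Lemma~\ref{Gronwall1}, so that the bound collapses through a geometric series. The continuity of $\theta$ on the compact interval $[t_m,t_{m+1}]$ immediately gives an initial crude bound of the form $\theta(t)\le A^{(0)}/\sqrt{t+\eps^2}$; for instance, one may take $A^{(0)}=\sqrt{t_{m+1}+\eps^2}\cdot\sup_{[t_m,t_{m+1}]}|\theta|$, which is finite.

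The next step is to show that the ansatz $\theta(t)\le A/\sqrt{t+\eps^2}$, substituted into the right-hand side of (\ref{G2Intineq}), reproduces itself with $A$ replaced by $\alpha+A\beta\delta_\eps$. Using the identity (\ref{eq:reasonfortms}), for $t\in[t_m,t_{m+1}]$ we have
$$\int_{t_m}^{t}\frac{ds}{s+\eps^2}\le \log\left(\frac{t_{m+1}+\eps^2}{t_m+\eps^2}\right)=\delta_\eps\log\eps^{-1},$$
so the double-integral term is bounded by $A\beta\delta_\eps/\sqrt{t+\eps^2}$, and together with the $\alpha/\sqrt{t+\eps^2}$ term this yields $\theta(t)\le A^{(n+1)}/\sqrt{t+\eps^2}$ with $A^{(n+1)}=\alpha+A^{(n)}\beta\delta_\eps$.

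Solving this linear recursion explicitly gives
$$A^{(n)}=\alpha\sum_{j=0}^{n-1}(\beta\delta_\eps)^{j}+(\beta\delta_\eps)^{n}A^{(0)}.$$
Under the hypothesis $\delta_\eps<\beta^{-1}$, the geometric series converges to $1/(1-\beta\delta_\eps)$ and the remainder $(\beta\delta_\eps)^{n}A^{(0)}$ vanishes as $n\to\infty$, producing the desired bound (\ref{eq:Gronwall2}).

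There is no substantive obstacle in this argument: it is entirely linear, and the exponential time mesh (\ref{eq:timescale}) was designed so that $\int_{t_m}^{t_{m+1}}(s+\eps^2)^{-1}\,ds$ evaluates to exactly $\delta_\eps\log\eps^{-1}$, which cancels the prefactor $\beta/\log\eps^{-1}$ and produces the clean contraction constant $\beta\delta_\eps$. The only thing to verify carefully is the finiteness of the starting bound $A^{(0)}$, which is precisely why the continuity hypothesis on $\theta$ appears in the statement.
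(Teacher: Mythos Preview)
Your argument is correct. It parallels the iterative bootstrapping of Lemma~\ref{Gronwall1} exactly, and the geometric-series collapse works as you describe.

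The paper takes a slightly more direct route: rather than iterating, it simply sets $A\coloneqq\sup_{t\in[t_m,t_{m+1}]}(t+\eps^2)^{1/2}\theta(t)$ (finite by continuity) and observes that substituting the ansatz $\theta(s)\le A/\sqrt{s+\eps^2}$ into (\ref{G2Intineq}) once already yields $(t+\eps^2)^{1/2}\theta(t)\le \alpha+\beta A\delta_\eps$ for every $t$, hence $A\le\alpha+\beta A\delta_\eps$, which rearranges immediately to $A\le\alpha/(1-\beta\delta_\eps)$. This shortcut is available here (and not in Lemma~\ref{Gronwall1}) because the target bound involves only a single scalar $A$ rather than a spatial profile, so the fixed-point inequality can be solved algebraically in one step instead of by iteration. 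Your iterative version reaches the same fixed point and is equally valid; the paper's version is just a line shorter.
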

    \begin{proof}
      Let
      \begin{equation}
        A\coloneqq \sup_{t\in [t_m,t_{m+1}]} (t+\eps^2)^{1/2}\theta(t),
      \end{equation}
      which is certainly finite since $\theta$ is continuous.
      Then (\ref{G2Intineq}) tells us that, for all $t\in [t_m,t_{m+1}]$, we have
      \[
        (t+\eps^2)^{1/2}\theta(t) \le \alpha + \frac{\beta A}{(\log\eps^{-1})}\int_{t_m}^t \frac{ds}{s+\eps^2} \le\alpha + \frac{\beta A}{(\log\eps^{-1})}\int_{t_m}^{t_{m+1}} \frac{ds}{s+\eps^2} \overset{(\ref{eq:reasonfortms})}= \alpha+\beta A\delta_\eps.
      \]
      This implies that $A\le \alpha +\beta A\delta_\eps$, so we obtain $A\le \alpha/(1-\beta\delta_\eps)$ and hence (\ref{Gronwall2}) whenever $\delta_\eps\beta<1$.
    \end{proof}

       \section{Proof of Proposition \ref{lipSoltnBd}}
       \label{proof-lipschitz}
       In this section, we will prove Proposition \ref{lipSoltnBd}, which gives (\ref{eq:main-theorem-McK-V}) under the stronger assumption that the nonlinearity $f$ satisfies the $C^{1,1}$ bounds (\ref{S1Bd}). In Section~\ref{mainResultSection}, we will extend this to general $f\in \bar S$ by approximation. First, we will decompose the equation for the error in a way that isolates the ``leading order Gaussian behavior.'' In order to maintain good control over the error, we will focus on each short interval $[t_m,t_{m+1}]$ individually. (See (\ref{eq:timescale}).) That is, we will construct an iterative scheme where at each step, we verify that the error has has not grown too large over the interval $[t_m,t_{m+1}]$, and then use this estimate to bound the amount of error generated over the next interval $[t_{m+1},t_{m+2}]$. In doing so, we are able to bound the error at the final time $T$ and obtain the quantitative rate of convergence (\ref{eq:lip-soltn-bd}).

       Throughout this section, we work with a fixed time horizon $T_0\in (0,\infty)$ and an arbitrary $T\in (0,T_0]$. Constants will depend on $T_0$ but not on $T$, and we assume that
       \begin{equation}\label{fS1}
         f\in \bar{S}_1(L_1,L_2) \qquad\text{for some $L_1,L_2\ge 0$}.
       \end{equation}

      \subsection{Bounding the Malliavin derivative on a short interval}
      The following estimate is the key to our iteration scheme. Let \[w_\varepsilon := u_\varepsilon - v_\varepsilon.\] We will first prove in Proposition~\ref{lipMainEstimate} an integral inequality for $|D_zw_\varepsilon|_{L^2(\mathbb{P})}$ in terms of $|D_zw_\eps|_{L^2(\mathbb{P})}$ and $|w_\varepsilon|_{L^2(\mathbb{P})}$. In Corollary~\ref{iterativeMallBd}, we eliminate the dependence on $|D_zw_\eps|_{L^2(\mathbb{P})}$ via the Grönwall-type inequality Lemma~\ref{Gronwall1}. In Corollary~\ref{itrLipSolBd}, we use the Gaussian Poincaré inequality to turn this into a bound on $|w_\eps|_{L^2(\mathbb{P})}$ in terms of $|w_\eps|_{L^2(\mathbb{P})}$ again, and then eliminate the last dependence using the Grönwall-type inequality Lemma~\ref{Gronwall2}. Substituting this back into the result of Corollary~\ref{iterativeMallBd}, we get an unconditional bound on $|D_zw_\eps|_{L^2(\mathbb{P})}$ on a short time interval. This will be the basis of our induction in Section~\ref{itrSection}.
     \begin{proposition}
    \label{lipMainEstimate} 
    Define $w_\eps$ as above and fix $z\in\mathbb{R}^2$. 
    Suppose that there is an $A<\infty$ such that
        \begin{equation}
        \label{mallBoundAssumption}
        |D_zw_\varepsilon(t_m,x)|_{L^2(\mathbb{P})}\leq \frac{A}{(\log\varepsilon^{-1})^{1/4}}e^{\mathfrak{m}t_m}G_{t_m+\varepsilon^2}(x-z)\qquad\text{for all $x\in\mathbb{R}^2$.}
        \end{equation}
    Then for all $t\in [t_m,t_{m+1}]$ and $x\in \mathbb{R}^2$, we have the bound
  \begin{equation}
          \begin{split}
              |D_zw_\varepsilon(t,x)|_{L^2(\mathbb{P})}&\leq \frac{A +  L_2 e^{3L_1+|\mathfrak{m}|T_0}\delta_\varepsilon}{(\log\varepsilon^{-1})^{1/4}}e^{\mathfrak{m}t}G_{t+\varepsilon^2}(x-z)\\
        &\qquad +\frac{L_1}{\log\varepsilon^{-1}}\int_{t_m}^t\frac{1}{s+\varepsilon^2}\int G^\mathfrak{m}_{t-s}(x-y)|D_zw_\varepsilon(s,y)|_{L^2(\mathbb{P})}\,dy\,ds\\
         &\qquad + \frac{ e^{3L_1+\mathfrak m t }L_2}{\log\varepsilon^{-1}}G_{t+\varepsilon^2}(x-z)\int_{t_m}^t \frac{|w_\varepsilon(s,x)|_{L^2(\mathbb{P})}}{\sqrt{s+\varepsilon^2}}\,ds.
            \end{split}\label{eq:Dzwbound_1}
      \end{equation}
    \end{proposition}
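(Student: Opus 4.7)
The approach is to derive a Duhamel equation for $D_zw_\varepsilon := D_zu_\varepsilon - D_zv_\varepsilon$ and decompose the driving term into an ``algebraically leading'' Gaussian fluctuation plus two Gr\"onwall-type contributions. Lemma~\ref{ptwMallbd} gives the PDE for $D_zu_\varepsilon$, and Malliavin-differentiating the McKean--Vlasov PDE (noting that $\mathbb{E}[f'(v_\varepsilon)]$ is deterministic, which is also apparent from Remark~\ref{detMallDer}) yields the analogous PDE for $D_zv_\varepsilon$. Subtracting gives
\begin{equation*}
  \partial_t D_zw_\varepsilon = \tfrac12\Delta D_zw_\varepsilon + \mathfrak{m} D_zw_\varepsilon - \frac{1}{\log\varepsilon^{-1}}\bigl[f'(u_\varepsilon)D_zu_\varepsilon - \mathbb{E}[f'(v_\varepsilon)]D_zv_\varepsilon\bigr].
\end{equation*}
The key algebraic identity, obtained by adding and subtracting $\mathbb{E}[f'(u_\varepsilon)]D_zu_\varepsilon$ and $\mathbb{E}[f'(v_\varepsilon)]D_zu_\varepsilon$, is
\begin{equation*}
  f'(u_\varepsilon)D_zu_\varepsilon - \mathbb{E}[f'(v_\varepsilon)]D_zv_\varepsilon = (f'(u_\varepsilon) - \mathbb{E}[f'(u_\varepsilon)])D_zu_\varepsilon + \mathbb{E}[f'(u_\varepsilon) - f'(v_\varepsilon)]D_zu_\varepsilon + \mathbb{E}[f'(v_\varepsilon)]D_zw_\varepsilon.
\end{equation*}
The three pieces will produce respectively the Gaussian fluctuation on the first line of (\ref{eq:Dzwbound_1}), the $w_\varepsilon$-Gr\"onwall integral on the third line, and the $D_zw_\varepsilon$-Gr\"onwall integral on the second line.

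The two Gr\"onwall pieces are routine. Starting Duhamel's formula at $t_m$, the homogeneous contribution reduces by Chapman--Kolmogorov and the hypothesis (\ref{mallBoundAssumption}) to $\frac{A}{(\log\varepsilon^{-1})^{1/4}}e^{\mathfrak{m} t}G_{t+\varepsilon^2}(x-z)$. The pointwise bound $|\mathbb{E}[f'(v_\varepsilon)]|\le L_1/(s+\varepsilon^2)$ from (\ref{S1Bd}) immediately yields the $L_1$-Gr\"onwall term. For the $w_\varepsilon$-Gr\"onwall piece, combining the Lipschitz estimate $|\mathbb{E}[f'(u_\varepsilon) - f'(v_\varepsilon)]| \le (L_2/\sqrt{s+\varepsilon^2})\,|w_\varepsilon(s,y)|_{L^2(\mathbb{P})}$ with the a.s.\ Malliavin bound $|D_zu_\varepsilon|\le e^{3L_1+\mathfrak{m} s}G_{s+\varepsilon^2}(y-z)$ from Lemma~\ref{ptwMallbd}, the spatial translation invariance of $|w_\varepsilon(s,y)|_{L^2(\mathbb{P})}$ in $y$ (by stationarity of $\eta$), and the heat-semigroup identity $G^{\mathfrak{m}}_{t-s}*G_{s+\varepsilon^2}=e^{\mathfrak{m}(t-s)}G_{t+\varepsilon^2}$ produces exactly the last term of (\ref{eq:Dzwbound_1}).

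The main obstacle is the ``centered'' piece $(f'(u_\varepsilon) - \mathbb{E}[f'(u_\varepsilon)])D_zu_\varepsilon$, which is where the first-line increment $L_2 e^{3L_1+|\mathfrak{m}|T_0}\delta_\varepsilon/(\log\varepsilon^{-1})^{1/4}$ must be generated. The plan is to apply the Gaussian Poincar\'e variance inequality $\Var(F)\le \mathbb{E}[|DF|^2_{L^2(\mathbb{R}^2)}]$ to $F=f'(s+\varepsilon^2,u_\varepsilon(s,y))$: the Malliavin chain rule gives $D_{z'}F = f''(u_\varepsilon)D_{z'}u_\varepsilon$, and combining the bound $|f''|\le L_2/\sqrt{s+\varepsilon^2}$ from (\ref{S1Bd}) with (\ref{eq:ptw-Mall-bd}) yields $|f'(u_\varepsilon) - \mathbb{E}[f'(u_\varepsilon)]|_{L^2(\mathbb{P})}\lesssim L_2 e^{3L_1+\mathfrak{m} s}/(s+\varepsilon^2)$. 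To extract the required sharp factor, I would further split $D_zu_\varepsilon = D_zv_\varepsilon + D_zw_\varepsilon$ inside the centered piece: the $D_zv_\varepsilon$ subpiece is deterministic with the sharper expression $D_zv_\varepsilon = \sigma_\varepsilon(s)e^{\mathfrak{m} s}G_{s+\varepsilon^2}$ from Remark~\ref{detMallDer} (in particular, $\sigma_\varepsilon\le e^{3L_1}$), while the $D_zw_\varepsilon$ subpiece already carries the $(\log\varepsilon^{-1})^{-1/4}$ prefactor from the inductive ansatz (\ref{mallBoundAssumption}) and can be absorbed into the Gr\"onwall term or treated as a higher-order contribution. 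Multiplying, invoking Chapman--Kolmogorov, and exploiting the identity (\ref{eq:reasonfortms}) for $\int_{t_m}^{t_{m+1}}(s+\varepsilon^2)^{-1}ds=\delta_\varepsilon \log\varepsilon^{-1}$ over a single short interval then produces the first line of (\ref{eq:Dzwbound_1}). The principal challenge is organizing the bookkeeping so that the per-step amplitude increment is of order $\delta_\varepsilon/(\log\varepsilon^{-1})^{1/4}$, which is the precise normalization needed so that summation over the $M_\varepsilon \asymp \delta_\varepsilon^{-1}$ short intervals from Section~\ref{RelTimeScale} produces a uniformly bounded total and ultimately the rate (\ref{eq:lip-soltn-bd}) in Proposition~\ref{lipSoltnBd}.
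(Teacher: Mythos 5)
Your decomposition is algebraically valid but differs from the paper's: you center $f'$ around $\mathbb{E}[f'(u_\eps)]$ and multiply by $D_zu_\eps$, while the paper centers around $\mathbb{E}[f'(v_\eps)]$ and multiplies by the \emph{deterministic} $D_zv_\eps$; after your further split $D_zu_\eps=D_zv_\eps+D_zw_\eps$, your $D_zv_\eps$-subpiece is morally the paper's $I_2$ (centering around $u_\eps$ rather than $v_\eps$ costs nothing, since the covariance bound below uses only the Malliavin bound (\ref{eq:ptw-Mall-bd}), which holds for both). The treatment of the homogeneous term and the two Gr\"onwall pieces is correct. The gap is in the estimate of the centered piece. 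Applying the Gaussian Poincar\'e \emph{variance} inequality pointwise in $(s,y)$ gives $|f'(u_\eps(s,y))-\mathbb{E}[f'(u_\eps(s,y))]|_{L^2(\mathbb{P})}\lesssim L_2 e^{3L_1+\mathfrak m s}/(s+\eps^2)$; multiplying by $D_zv_\eps(s,y)\le e^{3L_1+\mathfrak m s}G_{s+\eps^2}(y-z)$, passing the $|\cdot|_{L^2(\mathbb{P})}$ norm inside the Duhamel integral by Minkowski's inequality, and then using Chapman--Kolmogorov and (\ref{eq:reasonfortms}) yields a bound of order $L_2\,\delta_\eps\, e^{\mathfrak m t}G_{t+\eps^2}(x-z)$. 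This is short of the required $L_2\,\delta_\eps\,(\log\eps^{-1})^{-1/4}$ in (\ref{eq:Dzwbound_1}) by a factor of $(\log\eps^{-1})^{1/4}$, and the loss is fatal for the downstream iteration: accumulating $\sim\delta_\eps$ per step over $M_\eps\sim\delta_\eps^{-1}$ steps produces an $O(1)$ total error rather than one that vanishes.

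What is missing is the spatial-temporal decorrelation of the centered fluctuation, which is discarded the moment you push the $L^2(\mathbb{P})$ norm through the space-time integral. The paper instead leaves the norm outside and computes $\mathbb{E}[I_2^2]$ by expanding the square into a double $(s_1,y_1),(s_2,y_2)$ integral against the deterministic kernel $G^{\mathfrak m}_{t-s}\,D_zv_\eps$, and then bounds the two-point covariance $\Cov\bigl(f'(v_\eps(s_1,y_1)),f'(v_\eps(s_2,y_2))\bigr)$ by the Gaussian Poincar\'e \emph{covariance} inequality. This produces the extra off-diagonal decay $G_{s_1+s_2+2\eps^2}(y_1-y_2)$, which after the spatial integration yields an additional factor $\sim(s_1+s_2+2\eps^2)^{-1}$; the resulting time integral (estimated via the arctan identity) contributes only $\delta_\eps\log\eps^{-1}$, giving $\mathbb{E}[I_2^2]\lesssim\delta_\eps/\log\eps^{-1}$ and hence $|I_2|_{L^2(\mathbb{P})}\lesssim\delta_\eps(\log\eps^{-1})^{-1/4}$ after using $\delta_\eps=(\log\eps^{-1})^{-1/2}$. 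So the fix is to reorganize the centered-piece estimate around the second moment of the Duhamel integral and the covariance inequality, rather than the pointwise variance bound; the remaining parts of your argument can be kept as is.
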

    \begin{proof}
        We have the following PDE for $D_zw_\eps$:
        \begin{equation}
            \begin{split}
                \partial_t D_zw_\varepsilon &= \frac{1}{2}\Delta D_zw_\varepsilon + \mathfrak{m} D_zw_\varepsilon - \frac{1}{\log\varepsilon^{-1}}f'(t+\eps^2,u_\varepsilon)D_zu_\varepsilon + \frac{1}{\log\varepsilon^{-1}}\mathbb{E}[f'(t+\eps^2,v_\varepsilon)]D_zv_\varepsilon\\
                &= \frac{1}{2}\Delta D_zw_\varepsilon + \mathfrak{m} D_zw_\varepsilon - \frac{1}{\log\varepsilon^{-1}}f'(t+\eps^2,u_\varepsilon)D_zw_\varepsilon
                \\&\qquad+ \frac{1}{\log\varepsilon^{-1}}(\mathbb{E}[f'(t+\eps^2,v_\varepsilon)]-f'(t+\eps^2,v_\varepsilon))D_zv_\varepsilon\\
                &\qquad + \frac{1}{\log\varepsilon^{-1}}(f'(t+\eps^2,v_\varepsilon)-f'(t+\eps^2,u_\varepsilon))D_zv_\varepsilon.
            \end{split}
        \end{equation}
        Applying Duhamel's formula, we have for all $t\in [t_m,t_{m+1}]$ and $x\in \mathbb{R}^2$ that
        \begin{align}
            D_z&w_\varepsilon(t,x) \notag\\&= \int G^\mathfrak{m}_{t-t_m}(x-y)D_zw_\varepsilon(t_m,y)\,dy\notag\\
                 &\qquad - \frac{1}{\log\varepsilon^{-1}}\int_{t_m}^t\int G^\mathfrak{m}_{t-s}(x-y)f'(s+\eps^2,u_\varepsilon(s,y))D_zw_\varepsilon(s,y)\,dy\,ds\notag\\
                 &\qquad +\frac{1}{\log\varepsilon^{-1}}\int_{t_m}^t\int G^\mathfrak{m}_{t-s}(x-y)
                %\notag\\&\hspace{8em}\cdot
                (\mathbb{E}[f'(s+\eps^2,v_\varepsilon(s,y)]-f'(s+\eps^2,v_\varepsilon(s,y)))D_zv_\varepsilon(s,y)\,dy\,ds\notag\\
                 &\qquad + \frac{1}{\log\varepsilon^{-1}}\int_{t_m}^t\int G^\mathfrak{m}_{t-s}(x-y)
                %\\&\hspace{8em}\cdot 
                (f'(s+\eps^2,v_\varepsilon(s,y))-f'(s+\eps^2,u_\varepsilon(s,y)))D_zv_\varepsilon(s,y)\,dy\,ds\notag\\
                   &\eqqcolon I_0+I_1+I_2+I_3.\label{eq:I0I1I2I3}
              \end{align}
        We will bound each term separately in the $|\cdot|_{L^2(\mathbb{P})}$ norm.

        For  $I_0$, applying (\ref{mallBoundAssumption}) gives
        \begin{align}
                \notag |I_0|_{L^2(\mathbb{P})} &\leq \int G^\mathfrak{m}_{t-t_m}(x-y)|D_zw_\varepsilon(t_m,y)|_{L^2(\mathbb{P})}\,dy\\
                \notag &\leq \frac{A}{(\log\varepsilon^{-1})^{1/4}}\int G_{t-t_m}^\mathfrak{m}(x-y)e^{\mathfrak{m}t_m}G_{t_m+\varepsilon^2}(y-z)\,dy\\
                       &= \frac{A}{(\log\varepsilon^{-1})^{1/4}}e^{\mathfrak{m}t}G_{t+\varepsilon^2}(x-z).\label{eq:I0bd}
              \end{align}

        For $I_1$, our derivative bound (\ref{S1Bd}) implies that
        \begin{align}
                \notag |I_1|_{L^2(\mathbb{P})}&\leq \frac{1}{\log\varepsilon^{-1}}\int_{t_m}^t\int G^\mathfrak{m}_{t-s}(x-y)|f'(s+\eps^2,u_\varepsilon(s,u))D_zw_\varepsilon(s,y)|_{L^2(\mathbb{P})}\,dy\,ds\\
                                              &\leq \frac{L_1}{\log\varepsilon^{-1}}\int_{t_m}^t \frac{1}{s+\varepsilon^2}\int G^\mathfrak{m}_{t-s}(x-y)|D_zw_\varepsilon(s,y)|_{L^2(\mathbb{P})}\,dy\,ds. \label{eq:I1bd}
              \end{align}

        Before bounding $I_2$, we first observe that by the Gaussian Poincar\'e covariance inequality (see e.g.~\cite[Lemma 4.2]{MR4612705}), we have
        \begin{align}
          \notag &\hspace{-3em}\left|\mathbb{E}\left[\prod_{j=1}^2\Big{(}f'(s_j+\eps^2,v_\varepsilon(s_j,y_j))-\mathbb{E}[f'(s_j+\eps^2,v_\varepsilon(s_j,y_j))]\Big{)}\right]\right|\\
          \notag &= \left|\Cov\left(f'(s_1+\eps^2,v_\varepsilon(s_1,y_1)),f'(s_2+\eps^2,v_\varepsilon(s_2,y_2))\right)\right|\\
          \notag &\leq \int |D_z[f'(s_1+\eps^2,v_\varepsilon(s_1,y_1))]|_{L^2(\mathbb{P})}|D_z[f'(s_2+\eps^2,v_\varepsilon(s_2,y_2))]|_{L^2(\mathbb{P})}\,dz.\\
        \notag \overset{(\ref{S1Bd})}&\leq L_2^2\int \frac{|D_zv_\varepsilon(s_1,y_1)|_{L^2(\mathbb{P})}|D_zv_\varepsilon(s_2,y_2)|_{L^2(\mathbb{P})}}{(s_1+\varepsilon^2)^{1/2}(s_2+\varepsilon^2)^{1/2}} \,dz\\
                \notag \overset{(\ref{eq:ptw-Mall-bd})}&\leq e^{6L_1}L_2^2\int \frac{e^{\mathfrak{m}s_1}G_{s_1+\varepsilon^2}(y_1-z) e^{\mathfrak{m}s_2}G^\mathfrak{m}_{s_2+\varepsilon^2}(z-y_2)}{(s_1+\varepsilon^2)^{1/2}(s_2+\varepsilon^2)^{1/2}}\,dz\\
                &= e^{6L_1}L_2^2 \frac{ e^{\mathfrak{m}(s_1+s_2)}G_{s_1+s_2+2\varepsilon^2}(y_1-y_2)}{(s_1+\varepsilon^2)^{1/2}(s_2+\varepsilon^2)^{1/2}}.
        \label{gaussCov}
            %\end{split}
              \end{align}
        Returning to $I_2$, we see that
        \begin{align}
               \mathbb{E}[I_2^2]%\\
               &
               \notag = \mathbb{E}\Big{[}\Big{(}\frac{1}{\log\varepsilon^{-1}}\int_{t_m}^t\int G_{t-s}(x-y)\big{(}\mathbb{E}[f'(s+\eps^2,v_\varepsilon(s,y))]
               \\\notag&\hspace{20em}-f'(s+\eps^2,v_\varepsilon(s,y))\big{)}D_zv_\varepsilon(s,y)\,dy\,ds\Big{)}^2\Big{]}\\
               \notag & = \int_{t_m}^t\int_{t_m}^t\iint \left(\prod_{j=1}^2\frac{G^\mathfrak{m}_{t-s_j}(x-y_j)D_zv_\varepsilon(s_j,y_j)}{\log\varepsilon^{-1}}\right)
               \\\notag&\hspace{7em}\cdot \mathbb{E}\left[\prod_{j=1}^2\left(f'(s_j+\eps^2,v_\varepsilon(s_j,y_j))-\mathbb{E}[f'(s_j+\eps^2,v_\varepsilon(s_j,y_j))]\right)\right]\,dy_1\,dy_2\,ds_1\,ds_2\\
               \notag &\leq \int_{t_m}^t\int_{t_m}^t \frac{L_2^2e^{12L_1+2|\mathfrak{m}|T_0}}{(s_1+\varepsilon^2)^{1/2}(s_2+\varepsilon^2)^{1/2}}
                     \\&\hspace{4em}\cdot\iint \left(\prod_{j=1}^2\frac{G^\mathfrak{m}_{t-s_j}(x-y_j)e^{\mathfrak{m}s_j}G_{s_j+\varepsilon^2}(y_j-z)}{\log\varepsilon^{-1}}\right)G_{s_1+s_2+2\varepsilon^2}(y_1-y_2)\,dy_1\,dy_2\,ds_1\,ds_2,
       \label{I2ineq1}
       \end{align}
       where in the second line we used the fact that $D_zv_\varepsilon$ is deterministic as noted in Remark~\ref{detMallDer}, and in the last line we used (\ref{eq:ptw-Mall-bd}) and (\ref{gaussCov}). % and we used (\ref{gaussCov}) in the last line. 
       Next, we see that
       \begin{align*}
           \notag\iint \left(\prod_{j=1}^2\frac{G^\mathfrak{m}_{t-s_j}(x-y_j)e^{\mathfrak{m}s_j}G_{s_j+\varepsilon^2}(y_j-z)}{\log\varepsilon^{-1}}\right)G_{s_1+s_2+2\varepsilon^2}(y_1-y_2)\,dy_1\,dy_2 
           \\\notag&\hspace{-25em}\leq \frac{1}{s_1+s_2+2\varepsilon^2}\iint \left(\prod_{j=1}^2\frac{G^\mathfrak{m}_{t-s_j}(x-y_j)e^{\mathfrak{m}s_j}G_{s_j+\varepsilon^2}(y_j-z)}{\log\varepsilon^{-1}}\right)\,dy_1\,dy_2 
           \\&\hspace{-25em}= \frac{e^{2\mathfrak{m}t}G_{t+\varepsilon^2}(x-z)^2}{2\pi(s_1+s_2+2\varepsilon^2)(\log\varepsilon^{-1})^2}.
       \end{align*}
       Using this bound in (\ref{I2ineq1}) gives
      \begin{equation}
          \begin{split}
                \mathbb{E}[I_2^2] &\leq  \frac{L_2^2e^{12L_1+2|\mathfrak{m}|T_0}e^{2\mathfrak{m}t}G_{t+\varepsilon^2}(x-z)^2}{2\pi(\log\varepsilon^{-1})^2}
                \\&\hspace{10em}\cdot\int_{t_m}^t \frac{1}{(s_2+\varepsilon^2)^{1/2}} \int_{t_m}^t \frac{1}{(s_1+\varepsilon^2)^{1/2}(s_1+s_2+2\varepsilon^2)}\,ds_1\,ds_2 
           \\&= \frac{L_2^2e^{12L_1+2|\mathfrak{m}|T_0}e^{2\mathfrak{m}t}G_{t+\varepsilon^2}(x-z)^2}{\pi(\log\varepsilon^{-1})^2}\int_{t_m}^t \frac{1}{(s_2+\varepsilon^2)^{1/2}}\int_{\sqrt{t_m+\varepsilon^{2}}}^{\sqrt{t+\varepsilon^2}} \frac{1}{r^2+s_2+\varepsilon^2}\,dr\,ds_2
           \\&= \frac{L_2^2e^{12L_1+2|\mathfrak{m}|T_0}e^{2\mathfrak{m}t}G_{t+\varepsilon^2}(x-z)^2}{2\pi(\log\varepsilon^{-1})^2}\int_{t_m}^t
           \frac{\text{arctan}\left(\frac{\sqrt{t+\varepsilon^2}}{\sqrt{s_2+\varepsilon^2}}\right)-\text{arctan}\left(\frac{\sqrt{t_m+\varepsilon^2}}{\sqrt{s_2+\varepsilon^2}}\right)}{s_2+\varepsilon^2}\,ds_2.
          \end{split}
      \end{equation}
      Then using the bound $|\text{arctan}|\leq \frac{\pi}{2}$, we have
      \begin{equation}
      \label{I2ineq2}
          \begin{split}
              \mathbb{E}[I_2^2] &\leq \frac{L_2^2e^{12L_1+2|\mathfrak{m}|T_0}e^{2\mathfrak{m}t}G_{t+\varepsilon^2}(x-z)^2}{2(\log\varepsilon^{-1})^2}\int_{t_m}^{t_{m+1}}\frac{1}{s_2+\varepsilon^2}\,ds_2\\
              \overset{(\ref{eq:reasonfortms})}&= \frac{L_2^2e^{12L_1+2|\mathfrak{m}|T_0}\delta_\varepsilon e^{2\mathfrak{m}t}G_{t+\varepsilon^2}(x-z)^2}{2\log\varepsilon^{-1}}.
          \end{split}
      \end{equation}
      Because we will be iterating our error bound approximately $\delta_\varepsilon^{-1}$ times, hence accruing error of order $\delta_{\eps}^{-1}(\mathbb{E}[I_2^2])^{1/2}$, we will need to rewrite the right side of (\ref{I2ineq2}) to contain a term of order at least $\delta_\varepsilon^2$
      for the error to be summable. This means that we need to sacrifice some of our rate of convergence and, using~(\ref{eq:deltaMepsdefs}) rewrite (\ref{I2ineq2}) as
      \begin{equation}
      \label{rewriteStep}
           \mathbb{E}[I_2^2] \leq \frac{L_2^2e^{12L_1+2|\mathfrak{m}|T_0}\delta_\varepsilon^2 e^{2\mathfrak{m}t}G_{t+\varepsilon^2}(x-z)^2}{2(\log\varepsilon^{-1})^{1/2}}.
      \end{equation}
      We remark that although there is some flexibility in the choice of $\delta_\varepsilon$, this places a restriction on how small we are allowed to choose our time steps in Section~\ref{RelTimeScale}.

      For the final term $I_3$, we apply our $C^{1,1}$ bound (\ref{S1Bd}) as well as the spatial stationarity of $w_\eps$ to obtain
      \begin{equation}
          \begin{split}
              |I_3|_{L^2(\mathbb{P})} &\leq \frac{ e^{3L_1}}{\log\varepsilon^{-1}}\int_{t_m}^t |f'(t+\eps^2,v_\varepsilon(s,x))-f'(t+\eps^2,u_\varepsilon(s,x))|_{L^2(\mathbb{P})}
              \\&\hspace{15em}\cdot\int G^\mathfrak{m}_{t-s}(x-y)e^{\mathfrak{m}s}G_{s+\varepsilon^2}(y-z)\,dy\,ds\\
              &= \frac{ e^{3L_1}L_2}{\log\varepsilon^{-1}}e^{\mathfrak{m}t}G_{t+\varepsilon^2}(x-z)\int_{t_m}^t \frac{|w_\varepsilon(s,x)|_{L^2(\mathbb{P})}}{\sqrt{s+\varepsilon^2}}\,ds.
            \end{split}\label{eq:I3bd}
      \end{equation}
      
      Now (\ref{eq:I0bd}), (\ref{eq:I1bd}), (\ref{rewriteStep}), and (\ref{eq:I3bd}) to bound $I_0$, $I_1$, $I_2$, and $I_3$, respectively, in (\ref{eq:I0I1I2I3}), we conclude that
      %Together, this implies that 
      \begin{equation}
          \begin{split}
              |D_zw_\varepsilon(t,x)|_{L^2(\mathbb{P})}&\leq |I_0|_{L^2(\mathbb{P})}+|I_1|_{L^2(\mathbb{P})}+|I_2|_{L^2(\mathbb{P})}+|I_3|_{L^2(\mathbb{P})}\\
              &\leq \frac{A +  L_2 e^{6L_1+|\mathfrak{m}|T_0}\delta_\varepsilon}{(\log\varepsilon^{-1})^{1/4}}e^{\mathfrak{m}t}G_{t+\varepsilon^2}(x-z)\\
        &\qquad +\frac{L_1}{\log\varepsilon^{-1}}\int_{t_m}^t\frac{1}{(s+\varepsilon^2)}\int G^\mathfrak{m}_{t-s}(x-y)|D_zw_\varepsilon(s,y)|_{L^2(\mathbb{P})}\,dy\,ds\\
         &\qquad + \frac{ e^{3L_1 + \mathfrak m t }L_2}{\log\varepsilon^{-1}}G_{t+\varepsilon^2}(x-z)\int_{t_m}^t \frac{|w_\varepsilon(s,x)|_{L^2(\mathbb{P})}}{\sqrt{s+\varepsilon^2}}\,ds,
          \end{split}
      \end{equation}
      which completes the proof.
    \end{proof}

    The second term on the right side of (\ref{eq:Dzwbound_1}) can be treated using the Grönwall-type inequality Lemma~\ref{Gronwall1}.
    \begin{corollary}
    \label{iterativeMallBd}
   Under the assumption (\ref{mallBoundAssumption}), for all $\eps$ sufficiently small so that
   \begin{equation}
   \label{eq:delta-eps-condition-1}
       \delta_\eps < (e^{|\mathfrak{m}|T_0}L_1)^{-1},
   \end{equation}
   we have for all $t\in [t_m,t_{m+1}]$ and $x\in \mathbb{R}^2$ that
        \begin{multline}
          \label{eq:iterativeMallBd}|D_zw_\varepsilon(t,x)|_{L^2(\mathbb{P})}\\\leq \left(\frac{A + L_2 e^{6L_1+|\mathfrak{m}|T_0}\delta_\varepsilon}{(\log\varepsilon^{-1})^{1/4}}  + \frac{ e^{3L_1 }L_2}{\log\varepsilon^{-1}}\int_{t_m}^t \frac{|w_\varepsilon(s,x)|_{L^2(\mathbb{P})}}{\sqrt{s+\varepsilon^2}}\,ds\right)\frac{e^{\mathfrak{m}t}}{1-L_1\delta_\varepsilon}G_{t+\varepsilon^2}(x-z).
        \end{multline}
    \end{corollary}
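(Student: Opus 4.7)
The plan is to apply the Grönwall-type inequality Lemma~\ref{Gronwall1} directly to the bound (\ref{eq:Dzwbound_1}) from Proposition~\ref{lipMainEstimate}. First, I would group together the first and third terms on the right-hand side of (\ref{eq:Dzwbound_1}), both of which are multiples of $e^{\mathfrak{m}t}G_{t+\eps^2}(x-z)$, and rewrite the bound as
\[
|D_zw_\eps(t,x)|_{L^2(\mathbb{P})} \le \alpha(t)\, e^{\mathfrak{m}t}G_{t+\eps^2}(x-z) + \frac{L_1}{\log\eps^{-1}}\int_{t_m}^t \frac{1}{s+\eps^2}\int G^{\mathfrak{m}}_{t-s}(x-y)\,|D_zw_\eps(s,y)|_{L^2(\mathbb{P})}\,dy\,ds,
\]
where
\[
\alpha(t) := \frac{A + L_2 e^{6L_1+|\mathfrak{m}|T_0}\delta_\eps}{(\log\eps^{-1})^{1/4}} + \frac{e^{3L_1}L_2}{\log\eps^{-1}}\int_{t_m}^t \frac{|w_\eps(s,x)|_{L^2(\mathbb{P})}}{\sqrt{s+\eps^2}}\,ds.
\]
Note that $\alpha$ is independent of $x$ by the spatial stationarity of $w_\eps$. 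This is exactly the hypothesis (\ref{G1IntIneq}) of Lemma~\ref{Gronwall1} with $\beta=L_1$, modulo the fact that here $\alpha$ depends on $t$.

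To handle this $t$-dependence, I would observe that $\alpha(\cdot)$ is nondecreasing on $[t_m,t_{m+1}]$. An \emph{a priori} $L^\infty$ bound on $|D_zw_\eps|_{L^2(\mathbb{P})}$ (needed to start the induction) is available from the pointwise Malliavin estimate (\ref{eq:ptw-Mall-bd}) in Lemma~\ref{ptwMallbd}. Then the inductive argument in the proof of Lemma~\ref{Gronwall1} goes through verbatim, the only change being that when substituting the inductive hypothesis back into the integral at step $n$, one bounds $\alpha(s)\le \alpha(t)$ for $s\le t$ using monotonicity before pulling out of the integral. The same recursion $A^{(n+1)} = \alpha(t) + A^{(n)}L_1\delta_\eps$ and $B^{(n+1)} = B^{(n)}L_1 e^{|\mathfrak{m}|T_0}\delta_\eps$ then arises, which under the condition $L_1 e^{|\mathfrak{m}|T_0}\delta_\eps<1$ (precisely the hypothesis (\ref{eq:delta-eps-condition-1})) converges as $n\to\infty$ to the claimed bound (\ref{eq:iterativeMallBd}) with the factor $(1-L_1\delta_\eps)^{-1}$.

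The argument is essentially mechanical and no serious obstacle is expected; the only point requiring care is recording that the monotone $t$-dependence of $\alpha$ is compatible with the iteration scheme in Lemma~\ref{Gronwall1}, which it is. The hardest conceptual work has already been absorbed into Proposition~\ref{lipMainEstimate} (via the Gaussian Poincaré covariance inequality) and Lemma~\ref{Gronwall1} itself, leaving this corollary as a clean bookkeeping step.
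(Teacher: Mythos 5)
Your proposal is correct and matches the paper's argument: the paper also applies Lemma~\ref{Gronwall1} on the interval $[t_m,t]$ for each fixed $t$, with $\alpha$ taken to be the value of the bracketed quantity at the terminal time $t$, which works precisely because that quantity is nondecreasing in $t$ (the observation you make explicit), and obtains the a priori $L^\infty$ bound needed to start the iteration from Lemma~\ref{ptwMallbd} together with the triangle inequality, just as you do. Whether one freezes $\alpha$ at the endpoint and invokes the lemma verbatim, or threads the monotone $t$-dependence through the lemma's inductive proof, is a cosmetic distinction.
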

    \begin{proof}
By Lemma~\ref{ptwMallbd} and the triangle inequality, we have
        \begin{equation}
            |D_zw_\varepsilon(t,x)|_{L^2(\mathbb{P})}\leq 2e^{3L_1+|\mathfrak{m}|T_0}e^{\mathfrak{m}t}G_{t+\varepsilon^2}(x-z),
        \end{equation}
        so $ |D_zw_\varepsilon(t,x)|_{L^2(\mathbb{P})}\in L^\infty([t_m,t_{m+1}]\times \mathbb{R}^2)$. Then (\ref{eq:iterativeMallBd}) follows immediately from Lemma \ref{Gronwall1}, where we consider the interval $[t_m,t]$ and set 
        \begin{equation}
            \alpha = \left(\frac{A +  L_2 e^{6L_1+|\mathfrak{m}|T_0}\delta_\varepsilon }{(\log\varepsilon^{-1})^{1/4}} + \frac{ e^{3L_1 }L_2}{\log\varepsilon^{-1}}\int_{t_m}^t \frac{|w_\varepsilon(s,x)|_{L^2(\mathbb{P})}}{\sqrt{s+\varepsilon^2}}\,ds\right)e^{\mathfrak{m}t}G_{t+\varepsilon^2}(x-z)
        \end{equation}
        and $\beta = L_1$, noting that with these choices, the assumption (\ref{G1IntIneq}) is satisfied by (\ref{eq:Dzwbound_1}).
    \end{proof}
    The next step will be to translate Lemma~\ref{iterativeMallBd} into a bound on $|w_\varepsilon|_{L^2(\mathbb{P})}$ over the interval $[t_m,t_{m+1}]$ using the Gaussian Poincar\'e inequality. 
    \begin{corollary}
    \label{itrLipSolBd}
   Under the assumption (\ref{mallBoundAssumption}), for any $\eps>0$ such that (\ref{eq:delta-eps-condition-1}) holds and
    \begin{equation}
    \label{eq:delta-eps-condition-2}
        \delta_\eps \leq \left(\frac{e^{3L_1}L_2}{1-L_1\delta_\eps}\right)^{-1},
    \end{equation}
        we have, for all $t\in [t_m,t_{m+1}]$ and $x\in \mathbb{R}^2$, that %,h we have 
        \begin{equation}
        \label{itrLipSolBdResult}
        |w_\varepsilon(t,x)|_{L^2(\mathbb{P})}\leq \frac{e^{|\mathfrak{m}|t_{m+1}}(A+L_2e^{6L_1+|\mathfrak{m}|T_0}\delta_\eps)}{(1-(L_1+e^{3L_1}L_2)\delta_\eps)(\log\eps^{-1})^{1/4}\sqrt{t+\varepsilon^2}}.
        \end{equation}
    \end{corollary}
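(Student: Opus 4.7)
My plan is to combine the pointwise Malliavin derivative bound from Corollary~\ref{iterativeMallBd} with the Gaussian Poincar\'e covariance inequality and then close the resulting integral inequality with the Gr\"onwall-type Lemma~\ref{Gronwall2}. The starting observation is that $w_\eps(t,x)=u_\eps(t,x)-v_\eps(t,x)$ is centered: by (\ref{eq:v-explicit}) the McKean--Vlasov solution $v_\eps(t,x)$ is a mean-zero Gaussian, and because the nonlinearity $f$ is odd in its second argument, the PDE (\ref{eq:u-f-eqn}) is invariant under the joint reflection $\eta\mapsto -\eta$, $u_\eps\mapsto -u_\eps$, so $u_\eps(t,x)\stackrel{d}{=}-u_\eps(t,x)$ and hence $\mathbb{E}[u_\eps(t,x)]=0$. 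Thus the Gaussian Poincar\'e inequality yields
\begin{equation*}
  |w_\eps(t,x)|_{L^2(\mathbb{P})}^2 \le \int_{\mathbb{R}^2} |D_z w_\eps(t,x)|_{L^2(\mathbb{P})}^2 \, dz.
\end{equation*}

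Next I would insert the bound from Corollary~\ref{iterativeMallBd} into the right-hand side and compute the spatial integral $\int G_{t+\eps^2}(x-z)^2\,dz = \frac{1}{4\pi(t+\eps^2)}$. Taking square roots and bounding $e^{\mathfrak{m}t}\le e^{|\mathfrak{m}|t_{m+1}}$ produces an inequality of exactly the form required by Lemma~\ref{Gronwall2}, namely
\begin{equation*}
  \theta(t)\le \frac{\alpha}{\sqrt{t+\eps^2}} + \frac{\beta}{(\log\eps^{-1})\sqrt{t+\eps^2}}\int_{t_m}^{t}\frac{\theta(s)}{\sqrt{s+\eps^2}}\,ds,
\end{equation*}
for $\theta(t)\coloneqq |w_\eps(t,x)|_{L^2(\mathbb{P})}$, where $\alpha$ scales like $\frac{e^{|\mathfrak{m}|t_{m+1}}(A+L_2 e^{6L_1+|\mathfrak{m}|T_0}\delta_\eps)}{(1-L_1\delta_\eps)(\log\eps^{-1})^{1/4}}$ and $\beta$ scales like $\frac{e^{3L_1+|\mathfrak{m}|T_0}L_2}{1-L_1\delta_\eps}$. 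The continuity of $\theta$ and the unconditional a priori bound from Lemma~\ref{ptwMallbd} ensure that $\theta$ lies in $L^\infty([t_m,t_{m+1}])$, so the hypotheses of Lemma~\ref{Gronwall2} are met provided $\delta_\eps \beta<1$, which is exactly what (\ref{eq:delta-eps-condition-2}) guarantees.

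Applying Lemma~\ref{Gronwall2} then gives $\theta(t)\le \alpha / [(1-\beta\delta_\eps)\sqrt{t+\eps^2}]$. A short algebraic rearrangement clears the nested small-perturbation denominators: writing $1-\beta\delta_\eps=1-\frac{e^{3L_1}L_2\delta_\eps}{1-L_1\delta_\eps}$ and then multiplying out, the combined factor $(1-L_1\delta_\eps)(1-\beta\delta_\eps)$ collapses to $1-(L_1+e^{3L_1}L_2)\delta_\eps$, which produces the denominator in the claimed bound (\ref{itrLipSolBdResult}).

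The main obstacle is not conceptual but book-keeping: one must verify carefully that the two small-denominator corrections $(1-L_1\delta_\eps)^{-1}$ from Corollary~\ref{iterativeMallBd} and $(1-\beta\delta_\eps)^{-1}$ from Lemma~\ref{Gronwall2} indeed combine cleanly, and that the $e^{\mathfrak{m}t}$ factor surviving from the Malliavin bound is correctly absorbed into the uniform $e^{|\mathfrak{m}|t_{m+1}}$ prefactor. Any multiplicative constants inherited from $\int G_{t+\eps^2}^2\,dz$ are dominated by $1$ and can be absorbed into the other exponential factors already present, so no new parameters need to appear in the final statement.
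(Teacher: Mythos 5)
Your proposal is correct and follows essentially the same route as the paper: apply the Gaussian Poincar\'e inequality to $w_\eps$, insert the pointwise bound from Corollary~\ref{iterativeMallBd}, compute $\int G_{t+\eps^2}(x-z)^2\,dz = \frac{1}{4\pi(t+\eps^2)}\le \frac{1}{t+\eps^2}$, and close with Lemma~\ref{Gronwall2}, after which the two factors $(1-L_1\delta_\eps)^{-1}$ and $(1-\beta\delta_\eps)^{-1}$ telescope to $\left(1-(L_1+e^{3L_1}L_2)\delta_\eps\right)^{-1}$. One genuinely useful addition in your write-up is the explicit verification that $\mathbb{E}[w_\eps(t,x)]=0$ (via the oddness of $f$ and the $\eta\mapsto -\eta$ symmetry), which is needed to pass from $\Var(w_\eps)$ to $\mathbb{E}[w_\eps^2]$ in the Poincar\'e step; the paper uses this silently. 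Be careful about one internal inconsistency: you first write $\beta\sim \frac{e^{3L_1+|\mathfrak{m}|T_0}L_2}{1-L_1\delta_\eps}$ (which correctly carries the $e^{|\mathfrak{m}|t_{m+1}}$ factor coming from bounding $e^{\mathfrak{m}t}$) and then drop the $e^{|\mathfrak{m}|T_0}$ in the final rearrangement; tracking it consistently would replace the denominator by $1-(L_1+e^{3L_1+|\mathfrak{m}|t_{m+1}}L_2)\delta_\eps$, though this has no effect on the subsequent analysis since the extra factor is ultimately absorbed into the $T_0$- and $\mathfrak{m}$-dependent constants.
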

    \begin{proof}
      Applying the Gaussian Poincar\'e inequality (see e.g.\  \cite[Proposition 3.1]{MR2527030}) to $w_\varepsilon$ and using Corollary~\ref{iterativeMallBd}, we obtain for all $t\in [t_m,t_{m+1}]$ and $x\in \mathbb{R}^2$ that
        \begin{align}
          |w_\eps&(t,x)|_{L^2(\mathbb{P})} \leq \left(\int \mathbb{E}[D_zw_\varepsilon(t,x)^2]\,dz\right)^{1/2}\notag\\
                &\leq  (1-L_1\delta_\varepsilon)^{-1}\left(\frac{A +  L_2 e^{3L_1+|\mathfrak{m}|T_0}\delta_\varepsilon}{(\log\varepsilon^{-1})^{1/4}}  + \frac{ e^{3L_1 }L_2}{\log\varepsilon^{-1}}\int_{t_m}^t \frac{|w_\varepsilon(s,x)|_{L^2(\mathbb{P})}}{\sqrt{s+\varepsilon^2}}\,ds\right)
          \notag\\&\hspace{15em}\cdot\left(\int e^{\mathfrak{m}t}G_{t+\varepsilon^2}(x-z)e^{\mathfrak{m}t}G_{t+\varepsilon^2}(z-x)\,dz\right)^{1/2}\notag\\
                &\leq\frac{e^{|\mathfrak{m}|t_{m+1}}}{(1-L_1\delta_\eps)\sqrt{t+\varepsilon^2}}\left(\frac{A + L_2 e^{6L_1+|\mathfrak{m}|T_0}\delta_\varepsilon}{(\log\varepsilon^{-1})^{1/4}}  + \frac{ e^{3L_1 }L_2}{\log\varepsilon^{-1}}\int_{t_m}^t \frac{|w_\varepsilon(s,x)|_{L^2(\mathbb{P})}}{\sqrt{s+\varepsilon^2}}\,ds\right).%\\
                %&\hspace{20em}
              \end{align}
            In the last inequality we used the fact that $\int G_{t+\eps^2}(x-z)^2\,dz=\frac{1}{4\pi (t+\eps^2)}\le \frac1{t+\eps^2}$. Then applying  Lemma~\ref{Gronwall2} with
        \begin{equation}
            \alpha = e^{|\mathfrak{m}|t_{m+1}}\frac{A +  L_2 e^{6L_1+|\mathfrak{m}|T_0}\delta_\varepsilon}{(1-L_1\delta_\varepsilon)(\log\varepsilon^{-1})^{1/4}}\quad\text{and}\quad
        \beta = \frac{ e^{3L_1}L_2}{1-L_1\delta_\varepsilon}
    \end{equation}
    gives (\ref{itrLipSolBdResult}).
    \end{proof}
The final step is to substitute the bound obtained in Corollary \ref{itrLipSolBd} back into (\ref{itrLipSolBdResult}) in order to bound $|D_zw_\varepsilon(t_{m+1},\cdot)|_{L^2(\mathbb{P})}$, so we are ready to move on to the next interval.
    \begin{corollary}
    \label{KmInduciton}
          For any $\eps>0$ such that (\ref{eq:delta-eps-condition-1}) and (\ref{eq:delta-eps-condition-2}) both hold,
           if we have the bound
        \begin{equation}
        \label{KmIndHyp}
        |D_zw_\varepsilon(t_m,x)|_{L^2(\mathbb{P})} \leq \frac{A_m}{(\log\varepsilon^{-1})^{1/4}}e^{\mathfrak{m}t_m}G_{t_m+\varepsilon^2}(x-z)\qquad\text{for all }x\in\mathbb{R}^2,
        \end{equation}
        (i.e. (\ref{mallBoundAssumption}) with $A$ now denoted by $A_m$),
        then
        \begin{equation}
          \label{eq:Km-induction-result}|D_zw_\varepsilon(t_{m+1},x)|_{L^2(\mathbb{P})} \leq \frac{A_{m+1}}{(\log\varepsilon^{-1})^{1/4}}e^{\mathfrak{m}t_{m+1}}G_{t_{m+1}+\varepsilon^2}(x-z)\qquad\text{for all }x\in\mathbb{R}^2,
        \end{equation}
        where
        \begin{equation}
            A_{m+1} = \left(A_m + L_2 e^{6L_1 + |\mathfrak{m}|T_0}\delta_\varepsilon\right)\left(1+ \frac{e^{3L_1 + |\mathfrak{m}|T_0}L_2\delta_\varepsilon}{1-(L_1+e^{3L_1}L_2)\delta_\eps} \right)(1-L_1\delta_\varepsilon)^{-1}.
        \end{equation}
        \begin{proof}
            To simplify notation, we set
            \begin{equation}
              \rho_1(\varepsilon) \coloneqq (1-L_1\delta_\varepsilon)^{-1}\quad\text{and}\quad \rho_2(\varepsilon) \coloneqq (1-(L_1+e^{3L_1}L_2)\delta_\eps)^{-1}.
            \end{equation}
            By our induction hypothesis (\ref{KmIndHyp}), the conditions of Corollaries~\ref{iterativeMallBd} and~\ref{itrLipSolBd} are satisfied and we obtain for sufficiently small $\varepsilon$ depending on $L_1$ and $L_2$ that
           \begin{equation*}
               \begin{split}
                    |D_zw_\varepsilon(t,x)|_{L^2(\mathbb{P})}%\\
                     %&
                     &\leq  \rho_1(\varepsilon)\left(\frac{A_m + L_2 e^{6L_1+|\mathfrak{m}|T_0}\delta_\varepsilon}{(\log\varepsilon^{-1})^{1/4}}\right)  e^{\mathfrak{m}t}G_{t+\varepsilon^2}(x-z)
                     \\&\qquad + \rho_1(\varepsilon)\frac{ e^{3L_1 }L_2}{\log\varepsilon^{-1}}e^{\mathfrak{m}t}G_{t+\varepsilon^2}(x-z)\int_{t_m}^t \frac{|w_\varepsilon(s,x)|_{L^2(\mathbb{P})}}{\sqrt{s+\varepsilon^2}}\,ds\\
                     &\leq  \rho_1(\varepsilon)\left(\frac{A_m + L_2 e^{6L_1+|\mathfrak{m}|T_0}\delta_\varepsilon}{(\log\varepsilon^{-1})^{1/4}}\right) \delta_\varepsilon e^{\mathfrak{m}t}G_{t+\varepsilon^2}(x-z)
                     \\&\qquad + \left(\frac{A_m +  L_2 e^{6L_1+|\mathfrak{m}|T_0}\delta_\varepsilon}{(\log\varepsilon^{-1})^{1/4}}\right)\frac{ e^{3L_1 }L_2 }{\log\varepsilon^{-1}}e^{\mathfrak{m}t}G_{t+\varepsilon^2}(x-z)\int_{t_m}^t \frac{\rho_1(\varepsilon)\rho_2(\varepsilon)}{s+\varepsilon^2}\,ds\\
                     &\leq \left(\frac{A_m+ L_2e^{6L_1+|\mathfrak{m}|T_0}\delta_\varepsilon}{(\log\varepsilon^{-1})^{-1/4}}\right)\Bigl{(}1 + \rho_2(\eps) e^{3L_1+|\mathfrak{m}|}L_2\delta_\varepsilon\Bigr{)}\rho_1(\varepsilon) e^{\mathfrak{m}t}G_{t+\varepsilon^2}(x-z),
               \end{split}
           \end{equation*}
           which is (\ref{eq:Km-induction-result}).
        \end{proof}
    \end{corollary}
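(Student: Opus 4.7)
My plan is to prove Corollary~\ref{KmInduciton} by directly chaining together Corollaries~\ref{iterativeMallBd} and~\ref{itrLipSolBd}. Observe first that the inductive hypothesis~(\ref{KmIndHyp}) is precisely assumption~(\ref{mallBoundAssumption}) with $A = A_m$, so, under the smallness assumptions on $\eps$ encoded by~(\ref{eq:delta-eps-condition-1}) and~(\ref{eq:delta-eps-condition-2}), both corollaries apply on the interval $[t_m,t_{m+1}]$.

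I would first invoke Corollary~\ref{iterativeMallBd} to obtain, for each $t\in[t_m,t_{m+1}]$ and $x\in\mathbb{R}^2$, a bound on $|D_zw_\eps(t,x)|_{L^2(\mathbb{P})}$ consisting of a ``constant'' piece of the form
\[
\frac{A_m + L_2 e^{6L_1+|\mathfrak m|T_0}\delta_\eps}{(\log\eps^{-1})^{1/4}}\cdot(1-L_1\delta_\eps)^{-1}e^{\mathfrak m t}G_{t+\eps^2}(x-z)
\]
plus a time-integral term involving $|w_\eps(s,x)|_{L^2(\mathbb{P})}/\sqrt{s+\eps^2}$. The next step is to bound that integrand using Corollary~\ref{itrLipSolBd}, which estimates $|w_\eps(s,x)|_{L^2(\mathbb{P})}$ on $[t_m,t_{m+1}]$ by a deterministic function proportional to $(s+\eps^2)^{-1/2}$ with multiplicative factor $(1-(L_1+e^{3L_1}L_2)\delta_\eps)^{-1}$. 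Substituting this in, the integrand collapses to a constant multiple of $1/(s+\eps^2)$, and the time integral $\int_{t_m}^{t_{m+1}}\frac{ds}{s+\eps^2}$ evaluates to $\delta_\eps\log\eps^{-1}$ by the defining identity~(\ref{eq:reasonfortms}). That logarithm exactly cancels the prefactor $1/\log\eps^{-1}$ left over from the nonlinearity in Corollary~\ref{iterativeMallBd}, leaving behind a clean factor of $\delta_\eps$.

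Specializing to $t = t_{m+1}$ and collecting the two resulting contributions yields a pointwise bound of the form $\frac{A_{m+1}}{(\log\eps^{-1})^{1/4}} e^{\mathfrak m t_{m+1}} G_{t_{m+1}+\eps^2}(x-z)$, which is exactly the desired conclusion. I do not expect any substantive obstacle in this argument: the delicate analytic work has already been completed in Proposition~\ref{lipMainEstimate} and its two corollaries, and this result is essentially a packaging step that turns those three one-shot estimates into a one-step recursion suitable for iteration in the main proof of Proposition~\ref{lipSoltnBd}. The only care required is in tracking the two multiplicative factors $(1-L_1\delta_\eps)^{-1}$ and $(1-(L_1+e^{3L_1}L_2)\delta_\eps)^{-1}$ faithfully so that the stated closed form for $A_{m+1}$ is reproduced.
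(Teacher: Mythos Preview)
Your proposal is correct and follows essentially the same route as the paper's proof: invoke Corollary~\ref{iterativeMallBd} under the hypothesis~(\ref{KmIndHyp}), substitute the bound on $|w_\eps(s,x)|_{L^2(\mathbb{P})}$ from Corollary~\ref{itrLipSolBd} into the remaining time integral, evaluate that integral via~(\ref{eq:reasonfortms}), and collect the factors $(1-L_1\delta_\eps)^{-1}$ and $(1-(L_1+e^{3L_1}L_2)\delta_\eps)^{-1}$ at $t=t_{m+1}$. The only bookkeeping point you should make explicit is that the factor $e^{|\mathfrak m|t_{m+1}}$ coming out of Corollary~\ref{itrLipSolBd} gets absorbed into $e^{|\mathfrak m|T_0}$, which is how the exponent $3L_1+|\mathfrak m|T_0$ in the second multiplicative factor of $A_{m+1}$ arises.
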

    \subsection{Inductive bound on the solution}
    \label{itrSection}
    In this section, we will complete the proof of Proposition~\ref{lipSoltnBd} by iterating Corollary~\ref{KmInduciton} over $m$ until we obtain the amount of error accumulated over the entire interval $[0,T]$. By our assumption (\ref{eq:delta-eps-condition}), the conditions of Corollary~\ref{KmInduciton} are satisfied for $m=0$ immediately from the fact that $w_\varepsilon(0,\cdot) = 0$. Then by induction, Corollary \ref{KmInduciton} implies that for all $0 \leq m\leq M_\varepsilon$, we have the bound
    \begin{equation}
    \label{lipItrBdResult}
        |D_zw_\varepsilon(t_m,x)|_{L^2(\mathbb{P})} \leq \frac{A_{m}}{(\log\eps^{-1})^{1/4}}e^{\mathfrak{m}t_{m}}G_{t_m+\eps^2}(x-z),
    \end{equation}
    where $A_0 = 0$ and $A_{m+1}(\varepsilon)$ is defined inductively by
    \begin{equation}
           A_{m+1}(\varepsilon) = \left(A_m(\varepsilon) +  L_2 e^{6L_1 + |\mathfrak{m}|T_0}\delta_\varepsilon\right)(1-L_1\delta_\varepsilon)^{-1}\left(1+ \frac{e^{3L_1 + |\mathfrak{m}|T_0}L_2\delta_\varepsilon}{1-(L_1+e^{3L_1}L_2)\delta_\eps} \right).
    \end{equation}
    In particular, we have that
   \begin{equation}
       \begin{split}
              A_{m+1}(\varepsilon) &=  L_2 e^{6L_1+|\mathfrak{m}|T_0}\sum_{j=1}^{m}(1-L_1\delta_\varepsilon)^{-j}\left(1+ \frac{e^{3L_1 + |\mathfrak{m}|T_0}L_2\delta_\varepsilon}{1-(L_1+e^{3L_1}L_2)\delta_\eps} \right)^j\delta_\varepsilon\\
              &\leq L_2e^{6L_1+|\mathfrak{m}| T_0}(1-L_1\delta_\varepsilon)^{-m}\left(1+ \frac{e^{3L_1 + |\mathfrak{m}|T_0}L_2\delta_\varepsilon}{1-(L_1+e^{3L_1}L_2)\delta_\eps} \right)^m\delta_\varepsilon m.
       \end{split}
   \end{equation}
   Now, recall from Section \ref{RelTimeScale} that $M_\varepsilon \leq 3\delta_\varepsilon^{-1}$. This implies that for small enough $\varepsilon$, we have the bound
   \begin{equation}
       \begin{split}
           A_{M_\varepsilon}(\varepsilon)&\leq 3 e^{6L_1+|\mathfrak{m}|T_0}L_2(1-L_1\delta_\varepsilon)^{-3\delta_\varepsilon^{-1}}\left(1+ \frac{e^{3L_1 + |\mathfrak{m}|T_0}L_2\delta_\varepsilon}{1-(L_1+e^{3L_1}L_2)\delta_\eps} \right)^{3\delta_{\varepsilon}^{-1}}\\
           &\leq C_{T_0,\mathfrak{m}}e^{18L_1}L_2\exp\left\{\frac{3 e^{3L_1+|\mathfrak{m}|T_0}L_2}{1-(L_1+e^{3L_1}L_2)\delta_\eps}\right\},
       \end{split}
   \end{equation}
   where $C_{T_0,\mathfrak{m}}$ is a constant depending only on $T_0$ and $\mathfrak{m}$. Substituting this bound into (\ref{lipItrBdResult}) gives
   \begin{equation}
       |D_zw_\eps(t_{M_\eps},x)|_{L^2(\mathbb{P})}\leq \frac{C_{T_0,\mathfrak{m}}e^{18L_1}L_2\exp\left\{6 e^{3L_1+|\mathfrak{m}|T_0}L_2\right\}}{(\log\varepsilon^{-1})^{1/4}}e^{\mathfrak{m}t_{M_\eps}}G_{t_{M_\eps}+\eps^2}(x-z).
   \end{equation}
   In Section~\ref{RelTimeScale}, we also defined
   \begin{equation}
    M_\varepsilon := \left\lfloor \left(2+\frac{\log(T+\varepsilon^2)}{\log\varepsilon^{-1}}\right)\delta_\varepsilon^{-1}\right\rfloor,
   \end{equation}
    so to obtain a bound at time $T$ and complete the proof, we need to iterate one last time over the interval 
    \begin{equation}
\label{finalInt}
\left[\varepsilon^{2-M_\varepsilon\delta_\varepsilon}-\varepsilon^2, T\right].
    \end{equation}
    The only difference is that we now need to account for the fact that (\ref{finalInt}) is shorter than the previous intervals. We will write
    \begin{equation}
    \label{deltaF}
    \delta^{\mathrm{f}}_\varepsilon \coloneqq 2+\frac{\log(T+\varepsilon^2)}{\log\varepsilon^{-1}}-\left\lfloor \left(2+\frac{\log(T+\varepsilon^2)}{\log\varepsilon^{-1}}\right)\delta_\varepsilon^{-1}\right\rfloor\delta_\varepsilon,
    \end{equation}
    so that
    \begin{equation}
      \log\left(\frac{T}{\varepsilon^{2-M_\varepsilon \delta_\varepsilon}}\right)  = \delta_\varepsilon^{\mathrm{f}} \log\varepsilon^{-1}\leq \delta_\varepsilon\log\varepsilon^{-1}.
    \end{equation}
    We follow the proof of Proposition \ref{lipMainEstimate}, this time skipping (\ref{rewriteStep}) and simply using (\ref{I2ineq2}) instead, and we obtain 
\begin{equation}
          \begin{split}
            |D_zw_\varepsilon(t,x)|_{L^2(\mathbb{P})}&\leq \left(\frac{C_{T_0,\mathfrak{m}}e^{18L_1}L_2\exp\left\{6 e^{3L_1+|\mathfrak{m}|T_0}L_2\right\}}{(\log\varepsilon^{-1})^{1/4}} + \frac{  L_2 e^{3L_1+|\mathfrak{m}|T_0}}{(\log\varepsilon^{-1})^{1/2}}\sqrt{\delta^{\mathrm{f}}_\varepsilon} \right)e^{\mathfrak{m}t}G_{t+\varepsilon^2}(x-z)\\
        &\quad +\frac{L_1}{\log\varepsilon^{-1}}\int_{t_{M_\varepsilon}}^t\frac{1}{s+\varepsilon^2}\int G^\mathfrak{m}_{t-s}(x-y)|D_zw_\varepsilon(s,y)|_{L^2(\mathbb{P})}\,dy\,ds\\
         &\quad + \frac{ e^{3L_1 }L_2}{\log\varepsilon^{-1}}e^{\mathfrak{m}t}G_{t+\varepsilon^2}(x-z)\int_{t_{M_\varepsilon}}^t \frac{|w_\varepsilon(s,x)|_{L^2(\mathbb{P})}}{\sqrt{s+\varepsilon^2}}\,ds
          \end{split}
      \end{equation}
      on the interval $[t_{M_\varepsilon},T]$. Because the conditions (\ref{eq:delta-eps-condition-1}) and (\ref{eq:delta-eps-condition-2}) still hold when $\delta_\eps$ is replaced with $\delta^{\mathrm f}$, the rest of the proof is essentially the same as the previous iterations. We first apply (\ref{Gronwall1}) as in Corollary~\ref{iterativeMallBd} and obtain
      \begin{equation}
          \begin{split}
            |D_zw_\varepsilon(t,x)|_{L^2(\mathbb{P})}&\leq \frac{e^{\mathfrak{m}t}}{1-L_1\delta^{\mathrm f}_\varepsilon}G_{t+\varepsilon^2}(x-z)\\& \hspace{5em} \cdot\left(\frac{C_{T_0,\mathfrak{m}}e^{18L_1}L_2\exp\left\{\frac{3 e^{3L_1+|\mathfrak{m}|T_0}L_2}{1-(L_1+e^{3L_1}L_2)\delta_\eps}\right\}}{(\log\varepsilon^{-1})^{1/4}} + \frac{ L_2 e^{3L_1+|\mathfrak{m}|T_0}}{(\log\varepsilon^{-1})^{1/2}}\sqrt{\delta^{\mathrm f}_\varepsilon} \right)
          \\&\qquad  + \frac{ e^{3L_1 }L_2}{\log\varepsilon^{-1}}e^{\mathfrak{m}t}G_{t+\varepsilon^2}(x-z)\int_{t_{M_\varepsilon}}^t \frac{|w_\varepsilon(s,x)|_{L^2(\mathbb{P})}}{\sqrt{s+\varepsilon^2}}\,ds.
          \end{split}
      \end{equation}
      Then following the proof of (\ref{itrLipSolBd}) gives
      \begin{multline}
|w_\varepsilon(t,x)|_{L^2(\mathbb{P})}\leq
\frac{e^{|\mathfrak{m}|T_0}}{(1-(L_1+e^{3L_1}L_2)\delta^{\mathrm f}_\varepsilon)\sqrt{t+\varepsilon^2}}%\left(1-\frac{ e^{3L_1}L_2\delta^{\mathrm f}_\varepsilon}{1-L_1\delta^{\mathrm f}_\varepsilon}\right)^{-1}
\\\cdot\left(\frac{C_{T_0,\mathfrak{m}}e^{18L_1}L_2\exp\left\{\frac{3 e^{3L_1+|\mathfrak{m}|T_0}L_2}{1-(L_1+e^{3L_1}L_2)\delta_\eps}\right\}}{(\log\varepsilon^{-1})^{1/4}} + \frac{ L_2 e^{3L_1+|\mathfrak{m}|T_0}}{(\log\varepsilon^{-1})^{1/2}}\sqrt{\delta^{\mathrm f}_\varepsilon}\right),
      \end{multline}
     so
      \begin{equation}
          |w_\varepsilon(T,X)|_{L^2(\mathbb{P})}\leq C_{T_0,\mathfrak{m}}'e^{18L_1}L_2\exp\left\{\frac{3 e^{3L_1+|\mathfrak{m}|T_0}L_2}{1-(L_1+e^{3L_1}L_2)\delta_\eps}\right\}\frac{(\log\varepsilon^{-1})^{-1/4}}{\sqrt{T+\varepsilon^2}},
      \end{equation}
    where $C'_{T_0,\mathfrak m }$ is a new constant, still depending only on $T_0$ and $\mathfrak m$.
     Our choice of $T\in (0,T_0]$ was arbitrary, so this completes the proof of Proposition \ref{lipSoltnBd}.

       \section{Proof of Theorem \ref{thm:main-theorem-McK-V}}
       \label{mainResultSection}
    In this section, we prove our main theorem (Theorem~\ref{thm:main-theorem-McK-V}) for general nonlinearities $f\in \bar{S}'$. (See Definition \ref{def:S}.) Our technique will be to approximate the nonlinearity $f\in \bar{S}'$ by nonlinearities in $\bar{S}_1$, allowing us to apply Proposition \ref{lipSoltnBd}. Such an approximation is possible because of the contracting properties of nonlinearities in $\bar{S}_2'$, and in particular, our concentration result proved in Lemma~\ref{concentrationIneq}. We will add a cutoff to $f'$ at a scale which increases as we take $\eps \to 0$, and use the tail estimates on $u$ to show that our approximation does not generate too much error. Similar to our proof of Proposition \ref{lipSoltnBd}, it is difficult to bound the error all at once, so we will again construct an iterative scheme for bounding the error on the short intervals $[t_m,t_{m+1}]$. (See (\ref{eq:timescale}).) Throughout this section, we will assume that $f\in \bar{S}'$, and more specifically that
    \begin{equation}
      f=f_1 + f_2,\qquad\text{with}\qquad f_1\in \bar{S}_1(L_1,L_2)\quad\text{and}\quad f_2\in \bar{S}_2'(\gamma_1,\gamma_2,\ell_1,\ell_2)
    \end{equation}
    for some $L_1,L_2,\ell_1,\ell_2\geq 0$, $\gamma_1\in [2,3)$, and $\gamma_2\in [1,2)$.
    
    \subsection{Approximating functions}
    \label{approximatingFnSection}
    Here, we construct the approximating functions used to prove Theorem \ref{thm:main-theorem-McK-V}.  The main idea is to add a cutoff to $f'$ at increasingly large scales as $\varepsilon$ tends to $0$. The assumption on the growth of $f'$ and $f''$ for $f\in \bar{S}_2'$ then provides us with the bounds required for the approximation to be in $\bar{S}_1$. By Lemma \ref{concentrationIneq}, the probability that $f(t,u_\varepsilon)$ is different from the cut-off version decreases rapidly as we take $\varepsilon$ to $0$. We will use this fact in Section~\ref{approxErrorSection} to bound the amount of error generated by our approximation. The final step will be to show it is possible to make the cutoff grow sufficiently slowly so that Proposition \ref{lipSoltnBd} still applies. We will choose our cutoff parameter $g(\varepsilon)$ in Section~\ref{convergenceSection}, but for now we impose the asymptotic behavior
    \begin{equation}
    \label{gAsymp}
       \lim_{\varepsilon \to 0} g(\varepsilon) = \infty \quad\text{and} \quad \lim_{\varepsilon \to 0} e^{3g(\varepsilon)^{\gamma_1}}g(\varepsilon)^{\gamma_2}\delta_\varepsilon = 0.
    \end{equation}
    This means that $g(\eps)$ is diverging as $\eps\to 0$, but at an extremely slow (at most a fractional power of logarithmic, and eventually we will choose it to be iterated logarithmic) rate.
       
    \begin{lemma}
    \label{approxNonlinearity}
    There exists a family of functions $\{\tilde{f}_\eps\}_\eps$ of the form
        \begin{equation}
            \tilde{f}_\eps = f_1 + \tilde{f}_{2,\eps},
        \end{equation}
        such that the following hold:
        \begin{enumerate}[(1)]
        \label{enum:approx-nonlinearity-properties}
        \item We have \begin{equation}\tilde{f}_\eps(t,u) = f(t,u)\qquad\text{whenever}\qquad |u|\leq \frac{g(\eps)}{\sqrt{t}}.\label{eq:usually-agree}\end{equation}
            \item We have $\tilde f_{2,\eps} \in \bar{S}_2'(\gamma_1,\gamma_2,\ell_1,\ell_2)$.
            \item We have  \begin{equation}\tilde{f}_\eps \in \bar{S}_1(\tilde{L}_1(\eps),\tilde{L}_2(\eps)),\label{eq:ftildeinS1}
              \end{equation}
              with $\tilde{L}_1(\varepsilon) \coloneqq L_1+\ell_1(1+g(\varepsilon)^{\gamma_1})$ and $\tilde{L}_2(\varepsilon) \coloneqq L_2+\ell_2(1+g(\varepsilon)^{\gamma_2})$.
        \end{enumerate}
    \end{lemma}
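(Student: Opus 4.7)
The plan is to perform the cutoff at the level of the rescaled function $F_2$ rather than $f_2$ directly, since the bounds in Definition~\ref{def:S} are cleanest in the $F$-variables. Specifically, I would set $\tilde F_{2,\eps}(t,v) \coloneqq F_2(t,v)$ for $|v|\le g(\eps)$ and extend it linearly outside this interval with the slope $F_2'(t,g(\eps))$, preserving oddness in $v$. Explicitly,
\[
\tilde F_{2,\eps}(t,v) = \begin{cases} F_2(t,v), & |v|\le g(\eps),\\ F_2(t,g(\eps))+F_2'(t,g(\eps))(v-g(\eps)), & v>g(\eps),\\ -\tilde F_{2,\eps}(t,-v), & v<-g(\eps).\end{cases}
\]
Then I would define $\tilde f_{2,\eps}(t,u) \coloneqq t^{-3/2}\tilde F_{2,\eps}(t,t^{1/2}u)$ and $\tilde f_\eps \coloneqq f_1+\tilde f_{2,\eps}$.

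To verify (1), note that the condition $|u|\le g(\eps)/\sqrt t$ is equivalent to $|t^{1/2}u|\le g(\eps)$, which is precisely the region where $\tilde F_{2,\eps}(t,\cdot)$ agrees with $F_2(t,\cdot)$ by construction; so $\tilde f_\eps$ agrees with $f$ there. For (2), I would check the four defining properties of $\bar S_2'$. Oddness in $v$ is built in, and monotonicity follows because the extension slope $F_2'(t,g(\eps))$ is nonnegative (as $F_2$ is increasing) so $\tilde F_{2,\eps}'\ge 0$ everywhere. The derivative $\tilde F_{2,\eps}'$ equals $F_2'(t,v)$ on $[-g(\eps),g(\eps)]$ and is the constant $F_2'(t,g(\eps))$ outside; evenness of $F_2'$ (inherited from oddness of $F_2$) makes this continuous at $\pm g(\eps)$ and hence globally locally Lipschitz. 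For the Lipschitz bounds on $[0,u]$, if $u\le g(\eps)$ then $\tilde F_{2,\eps} = F_2$ on $[0,u]$ and the required bounds are given. If $u>g(\eps)$, then since $\tilde F_{2,\eps}'$ is constant on $[g(\eps),u]$, both $\Lip(\tilde F_{2,\eps}(t,\cdot)|_{[0,u]})$ and $\Lip(\tilde F_{2,\eps}'(t,\cdot)|_{[0,u]})$ equal the corresponding Lipschitz constants restricted to $[0,g(\eps)]$, which are bounded by $\ell_1(1+g(\eps)^{\gamma_1})\le \ell_1(1+u^{\gamma_1})$ and $\ell_2(1+g(\eps)^{\gamma_2})\le \ell_2(1+u^{\gamma_2})$ respectively.

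For (3), the global Lipschitz constant of $\tilde F_{2,\eps}(t,\cdot)$ on all of $\mathbb{R}$ is $\sup_v |\tilde F_{2,\eps}'(t,v)| = \sup_{[0,g(\eps)]}|F_2'(t,\cdot)| = \Lip(F_2(t,\cdot)|_{[0,g(\eps)]}) \le \ell_1(1+g(\eps)^{\gamma_1})$, and the global Lipschitz constant of $\tilde F_{2,\eps}'(t,\cdot)$ is $\Lip(F_2'(t,\cdot)|_{[0,g(\eps)]})\le \ell_2(1+g(\eps)^{\gamma_2})$, again using evenness of $F_2'$ to pass from $[-g(\eps),g(\eps)]$ to $[0,g(\eps)]$. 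Adding the contributions from $F_1$, which satisfies $\Lip(F_1(t,\cdot))\le L_1$ and $\Lip(F_1'(t,\cdot))\le L_2$ by assumption, gives that $\tilde F_\eps = F_1+\tilde F_{2,\eps}$ lies in $S_1(\tilde L_1(\eps),\tilde L_2(\eps))$, which by definition means $\tilde f_\eps\in\bar S_1(\tilde L_1(\eps),\tilde L_2(\eps))$.

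There is no real analytic obstacle here; the proof is essentially a bookkeeping exercise. The only subtlety is that the cutoff scale in the $u$-variable is $t$-dependent ($g(\eps)/\sqrt t$), but performing the cutoff in the $v = t^{1/2}u$ variable at the single scale $g(\eps)$ makes everything $t$-uniform and lets the bounds from $S_2'$ transfer cleanly to the bounds required for $S_1$, with the constants $\tilde L_1(\eps),\tilde L_2(\eps)$ dictated by evaluating $\ell_1(1+g(\eps)^{\gamma_1})$ and $\ell_2(1+g(\eps)^{\gamma_2})$.
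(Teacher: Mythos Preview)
Your proposal is correct and is in fact the same construction as the paper's, just written in the $F$-variable rather than the $f$-variable: the paper defines $\tilde f_{2,\eps}(t,u)=\int_0^u f_2'(t,|p|\wedge g(\eps)t^{-1/2})\,dp$, which upon the substitution $v=t^{1/2}u$ is exactly your piecewise-linear extension of $F_2(t,\cdot)$ beyond $|v|=g(\eps)$. Your verification of the three properties is slightly more detailed than the paper's (you spell out property~(2) and the evenness argument for passing to $[0,g(\eps)]$), but the approach is identical.
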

    \begin{proof}
For $t\ge 0$ and $u\in \mathbb{R}$, we define
        \begin{equation}\tilde f_{2,\eps}(t,u) = \int_0^u f_2'(t,|p|\wedge (g(\eps)t^{-1/2}))\,dp.
        \end{equation} In particular, when $|u|\le g(\eps)t^{-1/2}$, we have $\tilde f_{2,\eps}(t,u) = \int_0^u f_2'(t,|p|)\,dp = \int_0^u f_2'(t,p)\,dp = f_2(t,u)$ since $f_2(t,\cdot)$ is odd and so $f_2'(t,\cdot)$ is even. Thus if we set $\tilde f_\eps = f_1 + \tilde f_{2,\eps}$, then (\ref{eq:usually-agree}) holds. The fact that $\tilde{f}_\eps\in \bar{S}_1(\tilde{L}_1(\eps),\tilde{L}_2(\eps))$
       follows from our assumption that $f_2\in \bar{S}_2'(\gamma_1,\gamma_2,\ell_1,\ell_2)$, which gives the bounds
        \begin{equation}
            |f_2'(t,u)|\leq \frac{\ell_1(1+g(\varepsilon)^{\gamma_1})}{t}\quad\text{and}\quad  \Lip(f_2'(t,\cdot)|_{[0,u]})\leq \frac{\ell_2(1+g(\varepsilon)^{\gamma_2})}{\sqrt{t}}
        \end{equation}
        for all $|u|\leq \frac{g(\varepsilon)}{\sqrt{t}}$.
    \end{proof}
    We define the approximating solutions that will be used to prove Theorem \ref{thm:main-theorem-McK-V} as follows.
    \begin{definition}
        We let $\tilde{u}_\varepsilon$ and $\tilde{v}_\varepsilon$ be the solutions to
        \begin{equation}
                \begin{cases}
                    \partial_t \tilde{u}_\varepsilon = \frac{1}{2}\Delta \tilde{u}_\varepsilon + \mathfrak{m}\tilde{u}_\varepsilon - \frac{1}{\log\varepsilon^{-1}}\tilde{f}_\eps(t+\eps^2,\tilde{u}_\varepsilon)\\
                    \tilde{u}_\varepsilon(0,x) = u_\varepsilon(0,x)
                  \end{cases}\label{eq:cutoffu}
        \end{equation}
        and
        \begin{equation}
        \label{cutoffMckeanVlasov}
               \begin{cases}
                    \partial_t \tilde{v}_\varepsilon = \frac{1}{2}\Delta \tilde{v}_\varepsilon + \mathfrak{m}\tilde{v}_\varepsilon - \frac{1}{\log\varepsilon^{-1}}\mathbb{E}[\tilde{f}_\eps'(t+\eps^2,\tilde{v}_\varepsilon)]\tilde{v}_\varepsilon\\
                    \tilde{v}_\varepsilon(0,x) = v_\varepsilon(0,x)
               \end{cases}
        \end{equation}
        respectively, where $\tilde{f}_{\varepsilon}$ is given by Lemma \ref{approxNonlinearity}.
    \end{definition}
      Since $\tilde f_\eps\in \bar S'$, our estimates in Section~\ref{prelimEstimates} still hold, and in particular the McKean--Vlasov equation (\ref{cutoffMckeanVlasov}) is well-posed by the results of Section~\ref{ExistenceUniquenessSection}.
    
         \subsection{Sub-/super-solutions for the error}
    \label{subSupSection}
    We define the difference
    \begin{equation}
        U_\eps := \tilde{u}_\eps - u_\eps.
    \end{equation}
    To motivate the use of sub/super-solutions to control the error generated by our approximations, let us first rewrite the PDE for the difference $D_zU_{\eps}= D_z\tilde{u}_\varepsilon-D_z u_\varepsilon$. We see that  
    \begin{equation}
    \label{approxPDE}
        \begin{split}
          \partial_t D_zU_\varepsilon & = \frac{1}{2}\Delta D_z U_\varepsilon + \mathfrak{m} D_zU_\varepsilon - \frac{\tilde{f}_{\varepsilon}'(t+\eps^2,\tilde{u}_\varepsilon)}{\log\varepsilon^{-1}}D_z\tilde{u}_\varepsilon + \frac{f'(t+\eps^2,u_\varepsilon)}{\log\varepsilon^{-1}}D_zu_\varepsilon\\
            % &=  \frac{1}{2}\Delta D_z U_\varepsilon + \mathfrak{m} D_zU_\varepsilon - \frac{1}{\log\varepsilon^{-1}}(\tilde{f}_{\varepsilon}'(t+\eps^2,\tilde{u_\varepsilon})-f'(t+\eps^2,u_\varepsilon))D_z\tilde{u}_\varepsilon - \frac{1}{\log\varepsilon^{-1}}f'(t+\eps^2,u_\varepsilon)D_zU_\varepsilon\\
              &\ = \frac{1}{2}\Delta D_z U_\varepsilon + \mathfrak{m} D_zU_\varepsilon- \frac{f'_1(t+\eps^2,u_\varepsilon)}{\log\varepsilon^{-1}}D_zU_\varepsilon-\frac{f'_2(t+\eps^2,u_\varepsilon)}{\log\varepsilon^{-1}}D_zU_\varepsilon %\\&\qquad
            +E_{1;\eps}(t,x)+E_{2;\eps}(t,x),
        \end{split}
    \end{equation}
   where we have defined
        \begin{equation}
            E_{1;\eps}(t,x) \coloneqq - \frac{f_1'(t+\eps^2,\tilde{u}_\varepsilon(t,x))-f_1'(t+\eps^2,u_\varepsilon(t,x))}{\log\varepsilon^{-1}}D_z\tilde{u}_\varepsilon(t,x)
          \end{equation}
          and
          \begin{equation}
            E_{2;\eps}(t,x)\coloneqq -\frac{\tilde{f}_{2,\varepsilon}'(t+\eps^2,\tilde{u}_\varepsilon(t,x))-f_2'(t+\eps^2,u_\varepsilon(t,x))}{\log\varepsilon^{-1}}D_z\tilde{u}_\varepsilon(t,x) .
      \end{equation}
    Because $f_2'$ can be large, the term $-\frac{f_2'(t+\eps^2,u_\eps)}{\log\varepsilon^{-1}}D_zU_\varepsilon$ in (\ref{approxPDE}) is potentially dangerous. However, this term should actually be helping us by the assumption that $f_2'\geq 0$. Still, it is difficult to use this fact because we do not know that $D_zU_\varepsilon$ is signed. Instead, we consider the worst case for the other terms in the nonlinearity, which will provide us with sub-/super-solutions of $D_zU_\varepsilon$ that have signs. This will allow us to ignore the problematic term $ - \frac{f_2'(t+\eps^2,u_\eps)}{\log\eps^{-1}}D_zU_\eps$ similarly to our construction in Lemma~\ref{MckeanUnique}. Let 
    \begin{equation}
        E_\eps(t,x):=  E_{1;\eps}(t,x) +  E_{2;\eps}(t,x).
    \end{equation}
    \begin{lemma}
    \label{approxSubSup}
        We have
        \begin{equation}
        \label{approxSubSupIneq}
            \underline{D_zU_\varepsilon}\leq D_zU_\varepsilon \leq \overline{D_zU_\varepsilon},
        \end{equation}
        where $\underline{D_zU_\varepsilon}$ and $\overline{D_zU_\varepsilon}$ solve the equations
        \begin{equation}
        \label{eq:sub-sol-pde}
           \begin{split}
                \partial_t \underline{D_zU_\varepsilon} &= \frac{1}{2}\Delta  \underline{D_zU_\varepsilon}  + \left(\mathfrak{m} + \frac{L_1}{(t+\varepsilon^2)\log\varepsilon^{-1}}\right) \underline{D_zU_\varepsilon} - |E_\eps(t,x)|
           \end{split}
        \end{equation}
        and 
         \begin{equation}
        \label{eq:sup-sol-pde}
                \partial_t \overline{D_zU_\varepsilon} = \frac{1}{2}\Delta  \overline{D_zU_\varepsilon}  + \left(\mathfrak{m} + \frac{L_1}{(t+\varepsilon^2)\log\varepsilon^{-1}}\right) \overline{D_zU_\varepsilon} + |E_\eps(t,x)|,
        \end{equation}
        respectively, with the initial conditions
        \begin{equation}
          \underline{D_zU_\varepsilon}(0,x) = 0 =  \overline{D_zU_\varepsilon}(0,x).\label{eq:sub-sup-ic}
        \end{equation}
    \end{lemma}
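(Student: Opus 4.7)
The plan is a two-step application of the parabolic comparison principle. In (\ref{approxPDE}), the term $-f_2'(t+\varepsilon^2,u_\varepsilon)D_zU_\varepsilon/\log\varepsilon^{-1}$ cannot be discarded directly, since the sign of $D_zU_\varepsilon$ is unknown while $f_2'$ is only bounded by quantities that grow in $u_\varepsilon$. The idea is to first establish $\overline{D_zU_\varepsilon}\ge 0$ and $\underline{D_zU_\varepsilon}\le 0$ from (\ref{eq:sup-sol-pde})--(\ref{eq:sub-sol-pde}), and then leverage these signs in a second comparison.

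The first step is immediate: each of (\ref{eq:sup-sol-pde}) and (\ref{eq:sub-sol-pde}) is a linear parabolic equation with vanishing initial data (\ref{eq:sub-sup-ic}), zeroth-order coefficient $\mathfrak{m}+L_1/((t+\varepsilon^2)\log\varepsilon^{-1})$ integrable in $t$ (with integral $\le|\mathfrak m|T+3L_1$ as in Lemma~\ref{ptwMallbd}), and sign-definite source term $\pm|E_\varepsilon|$; the parabolic maximum principle yields the claim.

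For the second step, introduce the operator $\mathcal L'\phi := \partial_t\phi-\tfrac12\Delta\phi-\mathfrak m\phi+\frac{f_1'(t+\varepsilon^2,u_\varepsilon)+f_2'(t+\varepsilon^2,u_\varepsilon)}{\log\varepsilon^{-1}}\phi$, so that (\ref{approxPDE}) reads $\mathcal L'[D_zU_\varepsilon]=E_\varepsilon$. Using (\ref{eq:sup-sol-pde}), a direct computation gives
\[
\mathcal L'[\overline{D_zU_\varepsilon}-D_zU_\varepsilon]=(|E_\varepsilon|-E_\varepsilon)+\frac{1}{\log\varepsilon^{-1}}\left(f_1'(t+\varepsilon^2,u_\varepsilon)+f_2'(t+\varepsilon^2,u_\varepsilon)+\frac{L_1}{t+\varepsilon^2}\right)\overline{D_zU_\varepsilon}.
\]
Both summands on the right are nonnegative: the first because $|E_\varepsilon|\ge E_\varepsilon$, and the second because the bracket is nonnegative by $|f_1'|\le L_1/(t+\varepsilon^2)$ from (\ref{S1Bd}) and $f_2'\ge 0$ from (\ref{eq:S2derivpos}), multiplied by $\overline{D_zU_\varepsilon}\ge 0$ from the first step. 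Since $\tilde u_\varepsilon$ and $u_\varepsilon$ share the initial condition $\eta_\varepsilon$, we have $D_zU_\varepsilon(0,\cdot)\equiv 0 = \overline{D_zU_\varepsilon}(0,\cdot)$, and the maximum principle for $\mathcal L'$ (whose zeroth-order coefficient $-\mathfrak m+(f_1'+f_2')/\log\varepsilon^{-1}$ is pathwise locally bounded) then gives $\overline{D_zU_\varepsilon}\ge D_zU_\varepsilon$. The lower bound is symmetric: one obtains $\mathcal L'[\underline{D_zU_\varepsilon}-D_zU_\varepsilon]=-(|E_\varepsilon|+E_\varepsilon)+\frac{1}{\log\varepsilon^{-1}}(\,\cdot\,)\underline{D_zU_\varepsilon}\le 0$, since $\underline{D_zU_\varepsilon}\le 0$ flips the sign of the second summand.

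The main point to watch---rather than a genuine difficulty---is the sequencing: the sign of $\overline{D_zU_\varepsilon}$ (resp.\ $\underline{D_zU_\varepsilon}$) must be secured before the comparison, because it is precisely this sign that lets us absorb the potentially large $f_2'$ term without any control on the sign of $D_zU_\varepsilon$ itself. Verification that the comparison principle applies on $\mathbb R^2$ is standard given the pathwise smoothness and stationary-in-$x$, tame growth of $u_\varepsilon$ and $E_\varepsilon$ (cf.\ the proof of Lemma~\ref{MckeanUnique}).
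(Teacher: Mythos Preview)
Your proposal is correct and follows essentially the same approach as the paper: first establish the signs $\underline{D_zU_\varepsilon}\le 0\le\overline{D_zU_\varepsilon}$ via the maximum principle on the linear equations (\ref{eq:sub-sol-pde})--(\ref{eq:sup-sol-pde}), then apply the comparison principle for the operator built from the full potential $f'(t+\varepsilon^2,u_\varepsilon)$ to the differences $\overline{D_zU_\varepsilon}-D_zU_\varepsilon$ and $D_zU_\varepsilon-\underline{D_zU_\varepsilon}$. The only difference is a sign convention in the operator (your $\mathcal L'$ is the paper's $-\mathcal L$), and your discussion of why the sequencing matters and why the comparison principle applies on $\mathbb R^2$ is a useful elaboration of points the paper leaves implicit.
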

  The reader should note that the symbols $\underline{D_zU_\eps}$ and $\overline{D_zU_\eps}$ are to be interpreted as atomic symbols for objects defined via (\ref{eq:sub-sol-pde}--\ref{eq:sub-sup-ic}), not as the result of any particular mathematical operations applied to $D_zU_\eps$. We hope that the mnemonic utility of these symbols is worth this slight abuse of notation.
    \begin{proof}
      We first note that by the maximum principle (or simply the Duhamel formulas for the linear equations~(\ref{eq:sub-sol-pde}) and~(\ref{eq:sup-sol-pde})), we have 
      \begin{equation}\label{eq:Dzsubsupsigns}
           \underline{D_zU_\eps}(t,x)\le 0\le\overline{D_zU_\eps}(t,x)
        \end{equation}
        for all $t\in [0,T]$ and $x\in \mathbb{R}^2$. Now we define the parabolic operator
        \begin{equation}
          \mathcal{L}U := \frac12\Delta U  + \mathfrak{m}U -\frac{1}{\log\eps^{-1}}f'(t+\eps^2,u_\eps)U - \partial_t U
        \end{equation}
        and observe that
        \begin{equation}
            \begin{split}
                \mathcal{L}[\overline{D_zU_\eps}-D_zU_\eps ] &= -\frac{1}{\log\eps^{-1}}\left(f_1'(t+\eps^2,u_\eps) + \frac{L_1}{t+\eps^2}\right)\overline{D_zU_\eps}
                \\&\qquad - \frac{f_2'(t+\eps^2,u_\eps)}{\log\eps^{-1}}\overline{D_zU_\eps} +E_\eps(t,x)-|E_\eps(t,x)|\\
                &\leq 0
            \end{split}
        \end{equation}
        by (\ref{S1Bd}), (\ref{eq:S2derivpos}), and (\ref{eq:Dzsubsupsigns}).
        Hence, the maximum principle %(see e.g.\ \cite[Theorem~9 of Chapter~2]{MR181836}) 
        implies that
        \begin{equation}
            D_zU_\eps \leq \overline{D_zU_\eps}.
        \end{equation}
        Similarly, we have $\mathcal{L}[D_zU_\eps-\underline{D_zU_\eps} ]\leq 0$, which completes the proof of~(\ref{approxSubSupIneq}).
    \end{proof}
       \subsection{Iterative error bounds for the approximation}
       \label{approxErrorSection}
       By symmetry, we have $|\overline{D_zU_\eps}|_{L^p(\mathbb{P})}=|\underline{D_zU_\eps}|_{L^p(\mathbb{P})}$. Therefore, by Lemma~\ref{approxSubSup}, we can control $|D_zU_\varepsilon|_{L^2(\mathbb{P})}$, and therefore $|U_\eps|_{L^2(\mathbb{P})}$ using the Gaussian Poincar\'e inequality, by bounding $|\overline{D_zU_\varepsilon}|_{L^2(\mathbb{{P}})}$. We will use a similar iterative scheme to the one constructed in Section~\ref{itrSection}. That is, we will alternate between bounding the error at the level of the Malliavin derivatives and at the level of the solutions on each of the short intervals $[t_m,t_{m+1}]$. (See~(\ref{eq:timescale}).) The key to this estimate is our concentration inequality (\ref{concentrationIneq}), which by our construction in (\ref{approxNonlinearity}) implies that the probability that $f$ is different than $\tilde{f}_\eps$ decreases rapidly as we take $\eps \to 0$. We will work with a fixed time horizon $T_0\in (0,\infty)$ and an arbitrary $T\in (0,T_0]$.
    
    The first step in our iteration is to bound $|\overline{D_zU_\varepsilon}|_{L^2(\mathbb{{P}})}$ on the short interval $[t_m,t_{m+1}]$ by a small error term and a term which depends on the difference of the solutions $U_\varepsilon$. 

    \begin{lemma}
    \label{supSupShortIntBd}
        Suppose that for some $A<\infty$, we have the bound
        \begin{equation}
        \label{supSolIndHyp}
        |\overline{D_zU_\varepsilon}(t_m,x)|_{L^2(\mathbb{{P}})} \leq A\exp\left\{-\frac{g(\varepsilon)^2}{8e^{2|\mathfrak{m}|T_0+6L_1}}\right\} e^{\mathfrak{m}t_m}G_{t_m+\varepsilon^2}(x - z)\qquad\text{for all }x\in\mathbb{R}^2.
        \end{equation}
        Then for all $t\in [t_m,t_{m+1}]$ and $x\in \mathbb{R}^2$, we have, recalling the constants $K_{p,T_0}$ defined in Proposition~\ref{concentrationIneq},
        \begin{equation}
        \label{subSupShortIntIneq}
            \begin{split}
                |\overline{D_zU_\varepsilon}(t,x)|_{L^2(\mathbb{P})} &\leq e^{3L_1\delta_\eps}\left(A+ 2\ell_1K_{4\gamma_1,T_0}^{1/4}\delta_\varepsilon\right)\exp\left\{-\frac{g(\varepsilon)^2}{8e^{2|\mathfrak{m}|T_0+6L_1}}\right\}e^{\mathfrak{m}t} G_{t+\varepsilon^2}(x - z)\\ 
                &\qquad+\frac{ e^{3L_1(\delta_\varepsilon + 1)}(L_2+\tilde{L}_2(\eps))}{\log\varepsilon^{-1}}e^{\mathfrak{m}t}G_{t+\varepsilon^2}(x-z)\int_{t_m}^t \frac{|U_\varepsilon(s,x)|_{L^2(\mathbb{P})}}{\sqrt{s+\varepsilon^2}}\,ds.
            \end{split}
        \end{equation}
    \end{lemma}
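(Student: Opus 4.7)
The plan is to integrate the linear PDE~(\ref{eq:sup-sol-pde}) by Duhamel's formula with integrating factor $\Phi(t,s) = e^{\mathfrak m(t-s)}\bigl((t+\eps^2)/(s+\eps^2)\bigr)^{L_1/\log\eps^{-1}}$, which on $s,t\in[t_m,t_{m+1}]$ satisfies $\Phi(t,s)\le e^{\mathfrak m(t-s)+L_1\delta_\eps}$ by~(\ref{eq:reasonfortms}). This yields
\begin{align*}
  |\overline{D_zU_\eps}(t,x)|_{L^2(\mathbb P)}&\le \Phi(t,t_m)\int G_{t-t_m}(x-y)|\overline{D_zU_\eps}(t_m,y)|_{L^2(\mathbb P)}\,dy \\
  &\quad +\int_{t_m}^t\Phi(t,s)\int G_{t-s}(x-y)|E_\eps(s,y)|_{L^2(\mathbb P)}\,dy\,ds.
\end{align*}
The inductive hypothesis~(\ref{supSolIndHyp}) and Chapman--Kolmogorov then handle the first term, producing the $A$ contribution in~(\ref{subSupShortIntIneq}).

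For the forcing, I decompose $E_\eps = E_{1;\eps} + E_{2;\eps}^{\mathrm{Lip}} + E_{2;\eps}^{\mathrm{tail}}$, splitting $E_{2;\eps}$ by adding and subtracting $\tilde f_{2,\eps}'(s+\eps^2,u_\eps)D_z\tilde u_\eps/\log\eps^{-1}$. For the Lipschitz contributions I use $\Lip(f_1'(s+\eps^2,\cdot))\le L_2/\sqrt{s+\eps^2}$ and, from the construction of $\tilde f_{2,\eps}$ in the proof of Lemma~\ref{approxNonlinearity}, $\Lip(\tilde f_{2,\eps}'(s+\eps^2,\cdot))\le \tilde L_2(\eps)/\sqrt{s+\eps^2}$, together with the deterministic Malliavin bound $D_z\tilde u_\eps\le e^{3L_1+\mathfrak m s}G_{s+\eps^2}(\cdot-z)$ from Lemma~\ref{ptwMallbd} applied to $\tilde u_\eps$ regarded as a solution with reaction term in $\bar S'$ whose $\bar S_1$-piece is $f_1$ with Lipschitz constant $L_1$ (not the inflated $\tilde L_1(\eps)$). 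This gives a pointwise $L^2(\mathbb P)$-bound linear in $|U_\eps(s,y)|_{L^2(\mathbb P)}$ with prefactor a constant times $(L_2+\tilde L_2(\eps))e^{3L_1+\mathfrak m s}G_{s+\eps^2}(y-z)/((\log\eps^{-1})\sqrt{s+\eps^2})$.

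The tail term $E_{2;\eps}^{\mathrm{tail}}$ is supported on $\{|u_\eps(s,y)|>g(\eps)/\sqrt{s+\eps^2}\}$ by~(\ref{eq:usually-agree}). On this event the triangle inequality combined with~(\ref{S2Bd}) (which also forces $|f_2'(t,u)|\le \Lip(f_2(t,\cdot)|_{[0,|u|]}) \le \ell_1(t^{-1}+|u|^{\gamma_1}t^{(\gamma_1-2)/2})$) bounds the difference by $2\ell_1((s+\eps^2)^{-1}+|u_\eps|^{\gamma_1}(s+\eps^2)^{(\gamma_1-2)/2})$. The key tail estimate is the double Cauchy--Schwarz
\[
\bigl||u_\eps|^{\gamma_1}\mathbf{1}_{|u_\eps|>g(\eps)/\sqrt{s+\eps^2}}\bigr|_{L^2(\mathbb P)}^2 \le \mathbb E[|u_\eps|^{4\gamma_1}]^{1/2}\,\mathbb P\bigl(|u_\eps|>g(\eps)/\sqrt{s+\eps^2}\bigr)^{1/2},
\]
where the first factor is controlled by~(\ref{eq:momentbd}) and the second by Lemma~\ref{concentrationIneq}. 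After collapsing powers via $(s+\eps^2)^{(\gamma_1-2)/2}(s+\eps^2)^{-\gamma_1/2}=(s+\eps^2)^{-1}$, this yields
\[
|E_{2;\eps}^{\mathrm{tail}}(s,y)|_{L^2(\mathbb P)}\le \frac{C\ell_1 K_{4\gamma_1,T_0}^{1/4}e^{3L_1+\mathfrak m s}G_{s+\eps^2}(y-z)}{(\log\eps^{-1})(s+\eps^2)}\exp\Bigl\{-\frac{g(\eps)^2}{8e^{2|\mathfrak m|T_0+6L_1}}\Bigr\}.
\]

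Finally, I substitute these bounds into the Duhamel integral. Chapman--Kolmogorov reduces every spatial convolution to $G_{t+\eps^2}(x-z)$; spatial stationarity of $U_\eps$ lets me pull $|U_\eps(s,x)|_{L^2(\mathbb P)}$ out of the $dy$ integral in the Lipschitz contribution; and~(\ref{eq:reasonfortms}) converts the $ds/(s+\eps^2)$ integral in the tail contribution into the factor $\delta_\eps\log\eps^{-1}$ that cancels the $\log\eps^{-1}$ denominator, producing the $\delta_\eps$ factor in~(\ref{subSupShortIntIneq}). Collecting the prefactors $e^{\mathfrak m(t-s)}e^{L_1\delta_\eps}$ from $\Phi$ with the $e^{3L_1}$ from the Malliavin bound, and using the crude bound $e^{L_1\delta_\eps}\le e^{3L_1\delta_\eps}$, produces the claimed exponents $e^{3L_1\delta_\eps}$ and $e^{3L_1(\delta_\eps+1)}$. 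The main technical obstacle is ensuring the tail contribution remains small despite the constants $\tilde L_1(\eps),\tilde L_2(\eps)$ diverging polynomially in $g(\eps)$: the Gaussian concentration factor $\exp\{-g(\eps)^2/C\}$ defeats any polynomial-in-$g(\eps)$ growth, and the calibration~(\ref{gAsymp}) is designed so that this error is ultimately summable across the $O(\delta_\eps^{-1})$ iteration steps that will follow in the subsequent subsections.
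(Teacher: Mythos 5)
Your proof is correct and follows essentially the same route as the paper: Duhamel on the linear super-solution PDE~(\ref{eq:sup-sol-pde}) with the exact integrating factor (which the paper just bounds crudely by $e^{3L_1\delta_\eps}$), the identical three-way decomposition of $E_\eps$ obtained by inserting $\tilde f_{2,\eps}'(s+\eps^2,u_\eps)$, the Lipschitz/Malliavin-derivative bounds for the first two pieces, and the H\"older--concentration--moment estimate for the tail piece (your ``double Cauchy--Schwarz'' is exactly the paper's $|\cdot|_{L^2}\le|\cdot|_{L^4}\mathbb P(\cdot)^{1/4}$ step), concluding with Chapman--Kolmogorov, spatial stationarity, and~(\ref{eq:reasonfortms}). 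The only cosmetic difference is that you track the integrating factor explicitly before rounding up, which, if anything, makes the constant bookkeeping slightly cleaner than the paper's.
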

    \begin{proof}
        Writing Duhamel's formula for the solution to (\ref{eq:sup-sol-pde}) and taking $|\cdot|_{L^2(\mathbb{P})}$ of both sides gives, by the triangle inequality, that
        \begin{equation}
            \begin{split}
              |&\overline{D_zU_\varepsilon}(t,x)|_{L^2(\mathbb{P})}\\&\leq e^{3L_1\delta_\varepsilon}\int G^\mathfrak{m}_{t-t_m}(x-y)|\overline{D_zU_\varepsilon}(t_m,y)|_{L^2(\mathbb{P})}\,dy\\
                & \qquad+ \frac{e^{3L_1\delta_\varepsilon}L_2 }{\log\varepsilon^{-1}}\int_{t_m}^t\int G^\mathfrak{m}_{t-s}(x-y)\frac{|U_\varepsilon(s,x)D_zu_\varepsilon(s,y)|_{L^2(\mathbb{P})}}{\sqrt{s+\varepsilon^2}}\,dy\,ds\\
                &\qquad + \frac{e^{3L_1\delta_\varepsilon}}{\log\varepsilon^{-1}}\int_{t_m}^t\int G^\mathfrak{m}_{t-s}(x-y)
              \\&\hspace{10em}\cdot\left|\Bigl{(}\tilde{f}_{2,\varepsilon}'(s+\eps^2,\tilde{u}_\varepsilon(s,y))-\tilde{f}_{2,\varepsilon}'(s+\eps^2,u_\varepsilon(s,y))\Bigr{)}D_z\tilde{u}_\varepsilon(s,y)\right|_{L^2(\mathbb{P})}\,dy\,ds\\
                &\qquad+ \frac{e^{3L_1\delta_\varepsilon}}{\log\varepsilon^{-1}}\int_{t_m}^t\int G^\mathfrak{m}_{t-s}(x-y)
                \\&\hspace{10em}\cdot\left|
                \Big{(}\tilde{f}_{2,\varepsilon}'(s+\eps^2,u_\varepsilon(s,y))-f_2'(s+\eps^2,u_\varepsilon(s,y))\Big{)}D_z\tilde{u}_\varepsilon(s,y)\right|_{L^2(\mathbb{P})}\,dy\,ds,
            \end{split}
        \end{equation}
        where we used (\ref{eq:reasonfortms}), and also the assumption that $f_1\in \bar{S}_1(L_1,L_2)$ to bound the term $|E_{1;\eps}(s,y)|$ by $L_2(\log\eps^{-1}\sqrt{s+\eps^2})^{-1}|U_\eps(s,x)D_zu_\eps(s,y)|$. Then by Lemma~\ref{ptwMallbd}, our assumption (\ref{supSolIndHyp}), and Chapman--Kolmogorov, we have
         \begin{equation}
         \label{supSolDuhamel}
            \begin{split}
                |\overline{D_zU_\varepsilon}(t,x)|_{L^2(\mathbb{P})}&\leq Ae^{3L_1\delta_\varepsilon}\exp\left\{-\frac{g(\varepsilon)^2}{8e^{2|\mathfrak{m}|T_0+6L_1}}\right\} e^{\mathfrak{m}t}G_{t+\varepsilon^2}(x - z)\\
                &\qquad+ \frac{ e^{3L_1(\delta_\varepsilon+1)}  L_2}{\log\varepsilon^{-1}}e^{\mathfrak{m}t}G_{t+\varepsilon^2}(x-z)\int_{t_m}^t\frac{|U_\varepsilon(s,x)|_{L^2(\mathbb{P})}}{\sqrt{s+\varepsilon^2}}\,ds \\
                &\qquad+ \frac{ e^{3L_1(\delta_\varepsilon+1)} }{\log\varepsilon^{-1}}e^{\mathfrak{m}t}G_{t+\varepsilon^2}(x-z)\\
                &\hspace{5em}\cdot\int_{t_m}^t|\tilde{f}_{2,\varepsilon}'(s+\eps^2,\tilde{u}_\varepsilon(s,x))-\tilde{f}_{2,\varepsilon}'(s+\eps^2,u_\varepsilon(s,x))|_{L^2(\mathbb{P})}\,ds\\
                &\qquad+ \frac{ e^{3L_1(\delta_\varepsilon+1)} }{\log\varepsilon^{-1}}e^{\mathfrak{m}t}G_{t+\varepsilon^2}(x-z)
                \\&\hspace{5em}\cdot\int_{t_m}^t |
                \tilde{f}_{2,\varepsilon}'(s+\eps^2,u_\varepsilon(s,x))-f_2'(s+\eps^2,u_\varepsilon(s,x))|_{L^2(\mathbb{P})}\,ds.
            \end{split}
        \end{equation}
        By~(\ref{eq:ftildeinS1}), we have  %our construction in Lemma~\ref{approxNonlinearity}, we have 
        \begin{equation}
        \label{contractingItrBd1}
        |\tilde{f}_{2,\varepsilon}'(s+\eps^2,\tilde{u}_\varepsilon(s,x))-\tilde{f}_{2,\varepsilon}'(s+\eps^2,u_\varepsilon(s,x))|_{L^2(\mathbb{P})} \leq \frac{\tilde{L}_2(\eps)}{\sqrt{s+\varepsilon^2}}|U_\varepsilon(s,x)|_{L^2(\mathbb{P})}.
        \end{equation}
        Moreover, we can write using (\ref{eq:usually-agree}) that
        \begin{equation}
        \begin{split}
        \label{contractingItrBd2}
        \left|\tilde{f}_{2,\varepsilon}'(s+\eps^2,u_\varepsilon(s,x))-f_2'(s+\eps^2,u_\varepsilon(s,x))\right|_{L^2(\mathbb{P})}\\&\hspace{-15em}=  \left|(
        \tilde{f}_{2,\varepsilon}'(s+\eps^2,u_\varepsilon(s,x))-f_2'(s+\eps^2,u_\varepsilon(s,x)))\mathbf{1}\left\{|u_\varepsilon(s,x)|\geq \frac{g(\varepsilon)}{\sqrt{s+\varepsilon^2}}\right\}\right|_{L^2(\mathbb{P})}\\
              &\hspace{-15em}\leq 2\ell_1\bigl||u_\varepsilon(s,x)|^{\gamma_1}\bigr|_{L^4(\mathbb{P})}(s+\varepsilon^2)^{\frac{\gamma_1-2}{2}}  \mathbb{P}\left(|u_\varepsilon(s,x)|\geq \frac{g(\varepsilon)}{\sqrt{s+\varepsilon^2}}\right)^{1/4}\\
                &\hspace{-15em}\leq \frac{ 2\ell_1K_{4\gamma_1,T_0}^{1/4}}{s+\varepsilon^2}\exp\left\{-\frac{g(\varepsilon)^2}{8e^{2|\mathfrak{m}|T_0+6L_1}}\right\},
        \end{split}
        \end{equation}
        where in the first inequality we used the Cauchy--Schwarz inequality as well as (\ref{eq:ftildeinS1}),
        and in the last line
        we used both inequalities in Lemma~\ref{concentrationIneq}. Using~(\ref{contractingItrBd1}) and~(\ref{contractingItrBd2}) in~(\ref{supSolDuhamel}) gives
        \begin{equation}
            \begin{split}
                |\overline{D_zU_\varepsilon}(t,x)|_{L^2(\mathbb{P})} &\leq e^{3L_1\delta_\varepsilon}\left(A + 2\ell_1K_{4\gamma_1,T}^{1/4}\delta_\varepsilon\right)\exp\left\{-\frac{g(\varepsilon)^2}{8e^{2|\mathfrak{m}|T+6L_1}}\right\}e^{\mathfrak{mt}}G_{t+\varepsilon^2}(x - z)\\ 
                &\qquad+\frac{ e^{3L_1(\delta_\varepsilon + 1)}(L_2+\tilde{L}_2(\eps))}{\log\varepsilon^{-1}}e^{\mathfrak{m}t}G_{t+\varepsilon^2}(x-z)\int_{t_m}^t \frac{|U_\varepsilon(s,x)|_{L^2(\mathbb{P})}}{\sqrt{s+\varepsilon^2}}\,ds,
            \end{split}
        \end{equation}
        which is (\ref{subSupShortIntIneq}).
    \end{proof}

     The next step is to obtain a bound on $|U_\varepsilon|_{L^2(\mathbb{P})}$ over the interval $[t_m,t_{m+1}]$ using Lemma \ref{supSupShortIntBd} and the Gaussian Poincar\'e inequality. % \cite[Proposition 3.1]{MR2527030}.
    \begin{corollary}\label{contractingItrSolBd}
      Suppose that (\ref{supSolIndHyp}) holds.
        Then for sufficiently small $\eps$ depending only on $\mathfrak{m}$, $T_0$, $L_1$, $L_2$, $\ell_2$, $\gamma_2$, and our choice of the function $g$, we have for all $t\in [t_m,t_{m+1}]$ and $x\in \mathbb{R}^2$ that  
        \begin{multline}
        \label{eq:contractingItrSolBd}
|U_\varepsilon(t,x)|_{L^2(\mathbb{P})}\leq \frac{e^{3L_1\delta_\varepsilon+\mathfrak{m}t}}{\sqrt{t+\eps^2}}\left(1- e^{3L_1(\delta_\varepsilon + 1)+|\mathfrak{m}|T_0}(L_2+\tilde{L}_2(\eps))\delta_\varepsilon\right)^{-1}
            \\\cdot\left(A +2\ell_1K_{4\gamma_1,T}^{1/4}\delta_\varepsilon \right)\exp\left\{-\frac{g(\varepsilon)^2}{8e^{2|\mathfrak{m}|T_0+6L_1}}\right\}.
        \end{multline}
    \end{corollary}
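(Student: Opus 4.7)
The plan is to mirror the proof of Corollary \ref{itrLipSolBd}: turn the Malliavin-derivative bound of Lemma \ref{supSupShortIntBd} into a bound on $|U_\varepsilon|_{L^2(\mathbb{P})}$ via the Gaussian Poincaré inequality, and then resolve the resulting self-referential integral inequality using the Grönwall-type Lemma \ref{Gronwall2}.

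First, I would apply the Gaussian Poincaré inequality to the random variable $U_\varepsilon(t,x)$, giving
\begin{equation*}
|U_\varepsilon(t,x)|_{L^2(\mathbb{P})} \le \left(\int \mathbb{E}[D_zU_\varepsilon(t,x)^2]\,dz\right)^{1/2}.
\end{equation*}
By Lemma \ref{approxSubSup} together with the symmetry $|\overline{D_zU_\varepsilon}|_{L^2(\mathbb{P})} = |\underline{D_zU_\varepsilon}|_{L^2(\mathbb{P})}$, the integrand is bounded by $|\overline{D_zU_\varepsilon}(t,x)|_{L^2(\mathbb{P})}^2$, which we may in turn bound via (\ref{subSupShortIntIneq}). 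Substituting yields an estimate of the form
\begin{equation*}
|U_\varepsilon(t,x)|_{L^2(\mathbb{P})} \le \left(\int\Psi_\varepsilon(t,x,z)^2\,dz\right)^{1/2},
\end{equation*}
where $\Psi_\varepsilon(t,x,z)$ is the right-hand side of (\ref{subSupShortIntIneq}). Each term in $\Psi_\varepsilon$ is proportional to $e^{\mathfrak{m}t}G_{t+\varepsilon^2}(x-z)$, so the Minkowski inequality and the identity $\int G_{t+\varepsilon^2}(x-z)^2\,dz = \frac{1}{4\pi(t+\varepsilon^2)} \le \frac{1}{t+\varepsilon^2}$ pull out the factor $(t+\varepsilon^2)^{-1/2}$.

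After this step I would have an integral inequality of precisely the form required by Lemma \ref{Gronwall2}:
\begin{equation*}
|U_\varepsilon(t,x)|_{L^2(\mathbb{P})}\le \frac{\alpha}{\sqrt{t+\varepsilon^2}} + \frac{\beta}{(\log\varepsilon^{-1})\sqrt{t+\varepsilon^2}}\int_{t_m}^t \frac{|U_\varepsilon(s,x)|_{L^2(\mathbb{P})}}{\sqrt{s+\varepsilon^2}}\,ds,
\end{equation*}
with
\begin{equation*}
\alpha = e^{3L_1\delta_\varepsilon+|\mathfrak{m}|t_{m+1}}\left(A +2\ell_1K_{4\gamma_1,T_0}^{1/4}\delta_\varepsilon\right)\exp\!\left\{-\frac{g(\varepsilon)^2}{8e^{2|\mathfrak{m}|T_0+6L_1}}\right\},\qquad \beta = e^{3L_1(\delta_\varepsilon+1)+|\mathfrak{m}|T_0}(L_2+\tilde L_2(\varepsilon)).
\end{equation*}
Applying Lemma \ref{Gronwall2} then produces~(\ref{eq:contractingItrSolBd}) directly, with the factor $e^{|\mathfrak{m}|t_{m+1}}$ absorbed into $e^{\mathfrak{m}t}$ times a harmless constant on the interval $[t_m,t_{m+1}]$.

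The one place to be a little careful is the smallness condition needed for Lemma \ref{Gronwall2}, namely $\beta\delta_\varepsilon<1$. Since $\tilde L_2(\varepsilon) = L_2+\ell_2(1+g(\varepsilon)^{\gamma_2})$ grows with $g(\varepsilon)$, we need $g(\varepsilon)^{\gamma_2}\delta_\varepsilon\to 0$, which is exactly guaranteed by the asymptotic assumption (\ref{gAsymp}) on $g$. This is the reason for imposing that condition on $g$, and it is the only substantive obstacle in the proof; everything else is a routine adaptation of the corresponding Lipschitz-case calculation from Corollary~\ref{itrLipSolBd}.
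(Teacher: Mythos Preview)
Your proposal is correct and follows essentially the same approach as the paper: apply the Gaussian Poincaré inequality, use the sub/super-solution symmetry from Lemma~\ref{approxSubSup} to replace $D_zU_\varepsilon$ by $\overline{D_zU_\varepsilon}$, invoke Lemma~\ref{supSupShortIntBd}, integrate out $z$ via $\int G_{t+\varepsilon^2}(x-z)^2\,dz\le (t+\varepsilon^2)^{-1}$, and close with Lemma~\ref{Gronwall2} using (\ref{gAsymp}) to guarantee $\beta\delta_\varepsilon<1$. Your identification of the constants $\alpha,\beta$ and of the role of (\ref{gAsymp}) matches the paper exactly.
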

    \begin{proof}
    To simplify notation, we set
    \begin{equation}
        \kappa(\varepsilon)\coloneqq e^{3L_1\delta_\varepsilon}\left(A+2\ell_1K_{4\gamma_1,T_0}^{1/4}\delta_\varepsilon\right)\exp\left\{-\frac{g(\varepsilon)^2}{8e^{2|\mathfrak{m}|T_0+6L_1}}\right\}.
    \end{equation}
        By the Gaussian Poincaré inequality and the symmetry of (\ref{approxSubSupIneq}), we have
        \begin{equation}
            \begin{split}
              |U_\eps(t,x)|_{L^2(\mathbb{P})}  &\leq \left(\int \mathbb{E}[D_zU_\varepsilon(t,x)^2]\,dz\right)^{1/2}\\
                                               &\leq \left(\int \mathbb{E}[\overline{D_zU_\varepsilon}(t,x)^2]\,dz\right)^{1/2}\\
                                               &\leq \frac{e^{\mathfrak{m}t}}{\sqrt{t+\varepsilon^2}} \left(\kappa(\varepsilon)+\frac{ e^{3L_1(\delta_\varepsilon + 1)}(L_2+\tilde{L}_2(\eps))}{\log\varepsilon^{-1}}\int_{t_m}^t \frac{|U_\varepsilon(s,x)|_{L^2(\mathbb{P})}}{\sqrt{s+\varepsilon^2}}\,ds\right),
            \end{split}
        \end{equation}
        where the last line follows from Lemma~\ref{supSupShortIntBd}. Then, using our assumption (\ref{gAsymp}),  we may apply  Lemma~\ref{Gronwall2} with
        \begin{equation}
            \alpha = e^{\mathfrak{m}t}\kappa(\varepsilon)
        \end{equation}
        and
        \begin{equation}
            \beta =  e^{3L_1(\delta_\varepsilon + 1)+|\mathfrak{m}|T_0}(L_2+\tilde{L}_2(\eps)),
        \end{equation}
       which gives (\ref{eq:contractingItrSolBd}).
           \end{proof}

       The last step is to substitute the bound from Corollary \ref{contractingItrSolBd} back into the bound we obtained in Lemma \ref{supSupShortIntBd} in order to move on to the next interval.
    \begin{corollary}
    \label{contractingInductionStep}
         Suppose that for some $A_m\geq0$, we have the bound
        \begin{equation}
            |\overline{D_zU_\varepsilon}(t_m,\cdot)|_{L^2(\mathbb{{P}})} \leq A_m\exp\left\{-\frac{g(\varepsilon)^2}{8e^{2|\mathfrak{m}|T_0+6L_1}}\right\}e^{\mathfrak{m}t_{m}}G_{t_m+\varepsilon^2}(\cdot - z);
            \end{equation}
      i.e.\ (\ref{supSolIndHyp}) holds with $A=A_m$.
            Then, for sufficiently small $\eps$ depending only on $\mathfrak{m}$, $T_0$, $L_1$, $L_2$, $\ell_2$, $\gamma_2$, and our choice of the function $g$, we have 
        \begin{equation}
        \label{eq:contractingInductionStep-conclusion}
            |\overline{D_zU_\varepsilon}(t_{m+1},\cdot)|_{L^2(\mathbb{{P}})} \leq A_{m+1}\exp\left\{-\frac{g(\varepsilon)^2}{8e^{2|\mathfrak{m}|T_0+6L_1}}\right\} e^{\mathfrak{m}t_{m+1}}G_{t_{m+1}+\varepsilon^2}(\cdot - z),
            \end{equation}
            where 
            \begin{equation}
               A_{m+1} = e^{3L_1\delta_\varepsilon}\left(A_m + 2\ell_1K_{4\gamma_1,T_0}^{1/4}\delta_\varepsilon\right)\left(1+4 \ell_2g(\varepsilon)^{\gamma_2}e^{9L_1+2|\mathfrak{m}|T}\delta_\varepsilon\right).
            \end{equation}
    \end{corollary}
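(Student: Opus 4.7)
The plan is to obtain Corollary \ref{contractingInductionStep} as a direct consequence of Lemma \ref{supSupShortIntBd} and Corollary \ref{contractingItrSolBd}, both of which are available under the hypothesis (\ref{supSolIndHyp}) with $A=A_m$. First I would apply Lemma \ref{supSupShortIntBd} at $t=t_{m+1}$, which yields the bound
\begin{equation*}
\begin{split}
|\overline{D_zU_\varepsilon}(t_{m+1},x)|_{L^2(\mathbb{P})} &\leq e^{3L_1\delta_\eps}\left(A_m + 2\ell_1 K_{4\gamma_1,T_0}^{1/4}\delta_\eps\right)\exp\left\{-\tfrac{g(\eps)^2}{8e^{2|\mathfrak{m}|T_0+6L_1}}\right\}e^{\mathfrak{m}t_{m+1}}G_{t_{m+1}+\eps^2}(x-z)\\
&\quad + \frac{e^{3L_1(\delta_\eps+1)}(L_2+\tilde{L}_2(\eps))}{\log\eps^{-1}}e^{\mathfrak{m}t_{m+1}}G_{t_{m+1}+\eps^2}(x-z)\int_{t_m}^{t_{m+1}}\frac{|U_\eps(s,x)|_{L^2(\mathbb{P})}}{\sqrt{s+\eps^2}}\,ds.
\end{split}
\end{equation*}
The first term already has the shape required for the right-hand side of (\ref{eq:contractingInductionStep-conclusion}), so everything reduces to folding the integral term into the coefficient $A_{m+1}$.

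Next I would substitute the bound from Corollary \ref{contractingItrSolBd} into the remaining integrand. The two factors of $(s+\eps^2)^{-1/2}$ combine to $(s+\eps^2)^{-1}$, and the identity (\ref{eq:reasonfortms}) gives $\int_{t_m}^{t_{m+1}}\frac{ds}{s+\eps^2}=\delta_\eps\log\eps^{-1}$, which precisely cancels the $(\log\eps^{-1})^{-1}$ prefactor. After pulling the common factor $(A_m+2\ell_1 K_{4\gamma_1,T_0}^{1/4}\delta_\eps)\exp\{-g(\eps)^2/(8e^{2|\mathfrak{m}|T_0+6L_1})\}$ out, the integral term becomes
\begin{equation*}
e^{3L_1(2\delta_\eps+1)+|\mathfrak{m}|t_{m+1}}(L_2+\tilde{L}_2(\eps))\bigl(1-e^{3L_1(\delta_\eps+1)+|\mathfrak{m}|T_0}(L_2+\tilde{L}_2(\eps))\delta_\eps\bigr)^{-1}\delta_\eps
\end{equation*}
times the aforementioned common factor and $e^{\mathfrak{m}t_{m+1}}G_{t_{m+1}+\eps^2}(x-z)$. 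Adding this to the first term gives $A_{m+1}$ expressed as $e^{3L_1\delta_\eps}(A_m+2\ell_1 K_{4\gamma_1,T_0}^{1/4}\delta_\eps)$ times $(1+\text{correction})$.

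The last step is to bound the correction by $4\ell_2 g(\eps)^{\gamma_2}e^{9L_1+2|\mathfrak{m}|T}\delta_\eps$. This uses the definition $\tilde{L}_2(\eps)=L_2+\ell_2(1+g(\eps)^{\gamma_2})$, which with $g(\eps)\to\infty$ gives $L_2+\tilde{L}_2(\eps)\le 4\ell_2 g(\eps)^{\gamma_2}$ for $\eps$ small; the remaining exponential factors $e^{3L_1(2\delta_\eps+1)}$, $e^{|\mathfrak{m}|t_{m+1}}$, and the Grönwall correction $(1-\cdots)^{-1}$ are together dominated by $e^{9L_1+2|\mathfrak{m}|T}$ once $\eps$ is taken small enough that $\delta_\eps$ is small and, crucially, $e^{3L_1+|\mathfrak{m}|T_0}g(\eps)^{\gamma_2}\delta_\eps\to 0$ (which follows from assumption (\ref{gAsymp})), so the inverted factor is at most, say, $2\le e^{3L_1}$.

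The computation is essentially bookkeeping; there is no new analytic input beyond the two results being combined. The only subtlety — and the one point where one must be careful — is confirming that the hypotheses of Corollary \ref{contractingItrSolBd} (in particular the smallness condition on $\eps$) hold under (\ref{supSolIndHyp}) and the asymptotic (\ref{gAsymp}), and that the various exponential constants can be absorbed cleanly into $e^{9L_1+2|\mathfrak{m}|T}$ without picking up any further $\eps$-dependence. This is straightforward since (\ref{gAsymp}) forces $g(\eps)^{\gamma_2}\delta_\eps\to 0$, ensuring every Grönwall-type denominator stays uniformly bounded away from zero.
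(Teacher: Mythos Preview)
Your proposal is correct and follows essentially the same route as the paper: apply Lemma~\ref{supSupShortIntBd} at $t=t_{m+1}$, insert the bound from Corollary~\ref{contractingItrSolBd} into the remaining integral, use (\ref{eq:reasonfortms}) to evaluate $\int_{t_m}^{t_{m+1}}(s+\eps^2)^{-1}\,ds$, and then invoke (\ref{gAsymp}) to absorb the various constants into the stated form of $A_{m+1}$. The paper organizes the constant-cleanup slightly differently (first imposing the simplifying inequality $e^{3L_1(\delta_\eps+1)+|\mathfrak{m}|T_0}(L_2+\tilde L_2(\eps))\le 2\ell_2 g(\eps)^{\gamma_2}e^{6L_1+|\mathfrak{m}|T_0}$ and then bounding the Gr\"onwall denominator by $2$), but the content is identical. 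One small remark: your absorption ``$2\le e^{3L_1}$'' fails when $L_1$ is small; the factor of $2$ from the Gr\"onwall denominator should instead be absorbed into the prefactor $4$ (together with the $2$ from $L_2+\tilde L_2(\eps)\le 2\ell_2 g(\eps)^{\gamma_2}$), leaving $e^{9L_1}$ to handle $e^{3L_1(2\delta_\eps+1)}$.
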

    \begin{proof}
        To simplify later notation, we first take $\varepsilon$ sufficiently small so that 
        \begin{equation}
          e^{3L_1(\delta_\varepsilon + 1)+|\mathfrak{m}|T_0}(L_2+\tilde{L}_2(\eps))\leq 2 \ell_2g(\varepsilon)^{\gamma_2}e^{6L_1+|\mathfrak{m}|T_0},\label{eq:simplifying-assumption}
        \end{equation}
        which we can see is possible by examining the definition of $\tilde L_2(\eps)$ in the statement of Lemma~\ref{approxNonlinearity} and recalling (\ref{gAsymp}).
        Using this assumption in the conclusion of Corollary~\ref{contractingItrSolBd}, we obtain
          \begin{multline}
            |U_\varepsilon(t,x)|_{L^2(\mathbb{P})}\\\leq e^{3L_1\delta_\varepsilon+\mathfrak{m}t}\left(1- 2 \ell_2g(\varepsilon)^{\gamma_2}e^{6L_1+|\mathfrak{m}|T_0}\delta_\varepsilon\right)^{-1}\left(A_m + 2\ell_1K_{4\gamma_1,T_0}^{1/4}\delta_\varepsilon\right)\exp\left\{-\frac{g(\varepsilon)^2}{8e^{2|\mathfrak{m}|T_0+6L_1}}\right\}.\label{eq:Ubd}
        \end{multline}
        To further simplify later notation, we also use (\ref{gAsymp}) to take $\varepsilon$ sufficiently small so that the right side of (\ref{eq:Ubd}) is bounded by
        \begin{equation*}
              2e^{3L_1+|
              \mathfrak{m}|T_0}\left(A_m + 2\ell_1K_{4\gamma_1,T_0}^{1/4}\delta_\varepsilon\right)\exp\left\{-\frac{g(\varepsilon)^2}{8e^{2|\mathfrak{m}|T_0+6L_1}}\right\}.
        \end{equation*}
        Using this as well as (\ref{eq:simplifying-assumption}) again in (\ref{subSupShortIntIneq}), we get
         \begin{equation}
            \begin{split}
                |\overline{D_zU_\varepsilon}(t,x)|_{L^2(\mathbb{P})} &\leq e^{3L_1\delta_\varepsilon}\left(A_m+ 2\ell_1K_{4\gamma_1,T_0}^{1/4}\delta_\varepsilon\right)\exp\left\{-\frac{g(\varepsilon)^2}{8e^{2|\mathfrak{m}|T_0+6L_1}}\right\} e^{\mathfrak{m}t}G_{t+\varepsilon^2}(x - z)\\ 
                &\qquad+\frac{2 \ell_2g(\varepsilon)e^{6L_1+|\mathfrak{m}|T_0}}{\log\varepsilon^{-1}}e^{\mathfrak{m}t}G_{t+\varepsilon^2}(x-z)\int_{t_m}^t \frac{|U_\varepsilon(s,x)|_{L^2(\mathbb{P})}}{\sqrt{s+\varepsilon^2}}\,ds\\
                &\leq e^{3L_1\delta_\varepsilon}\left(A_m+2\ell_1K_{4\gamma_1,T_0}^{1/4}\delta_\varepsilon\right)\left(1+4 \ell_2g(\varepsilon)^{\gamma_2}e^{9L_1+2|\mathfrak{m}|T_0}\delta_\varepsilon\right)
                \\&\hspace{15em} \cdot\exp\left\{-\frac{g(\varepsilon)^2}{8e^{2|\mathfrak{m}|T_0+6L_1}}\right\}e^{\mathfrak{m}t}G_{t+\varepsilon^2}(x-z),
            \end{split}
        \end{equation}
      which is (\ref{eq:contractingInductionStep-conclusion}).
    \end{proof}
    We will now show that $\tilde{u}_\varepsilon$ is a good approximation for $u_\varepsilon$ by iterating our argument until we are able to bound the error at the final time $T$.
    \begin{lemma}
    \label{approxConverges}
        For all $X\in \mathbb{R}^2$, we have
        \begin{equation}
        \label{eq:approx-converges}
            \adjustlimits \lim_{\varepsilon\to 0} \sup_{T\in [0,T_0]}(T+\eps^2)^{1/2}|u_\varepsilon(T,X)-\tilde{u}_\varepsilon(T,X)|_{L^2(\mathbb{P})}=0.
        \end{equation}
    \end{lemma}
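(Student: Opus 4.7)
The strategy is to iterate Corollary~\ref{contractingInductionStep} along the short intervals $[t_m,t_{m+1}]$ constructed in Section~\ref{RelTimeScale}, then perform one final iteration over the remainder interval $[t_{M_\eps},T]$ exactly as in Section~\ref{itrSection}, and finally apply Corollary~\ref{contractingItrSolBd} once more to translate the resulting bound on $|\overline{D_zU_\eps}(T,\cdot)|_{L^2(\mathbb{P})}$ into the desired estimate on $|U_\eps(T,X)|_{L^2(\mathbb{P})}$.

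The iteration begins with $A_0=0$, which is justified by the zero initial condition (\ref{eq:sub-sup-ic}) for $\overline{D_zU_\eps}$, so the hypothesis (\ref{supSolIndHyp}) of Lemma~\ref{supSupShortIntBd} holds trivially at $m=0$. Writing $C_1\coloneqq 2\ell_1 K_{4\gamma_1,T_0}^{1/4}$ and $C_2\coloneqq 4\ell_2 e^{9L_1+2|\mathfrak{m}|T_0}$, Corollary~\ref{contractingInductionStep} yields the linear recursion $A_{m+1}=r_\eps A_m + r_\eps C_1\delta_\eps$ with multiplicative factor $r_\eps\coloneqq e^{3L_1\delta_\eps}\bigl(1+C_2 g(\eps)^{\gamma_2}\delta_\eps\bigr)$. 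Telescoping with $A_0=0$ and using $M_\eps\delta_\eps\leq 3$ together with the elementary inequality $(1+x)^n\leq e^{nx}$ for $x\geq 0$, we obtain
\[
A_{M_\eps}\leq M_\eps\,\delta_\eps\,C_1\,r_\eps^{M_\eps}\leq 3C_1\,e^{9L_1}\exp\bigl(3C_2\,g(\eps)^{\gamma_2}\bigr).
\]
A final iteration over the short remainder interval $[t_{M_\eps},T]$ with the mesh $\delta_\eps^{\mathrm f}\leq\delta_\eps$ defined in (\ref{deltaF}) (exactly as in the last step of Section~\ref{itrSection}) produces a bound on $|\overline{D_zU_\eps}(T,\cdot)|_{L^2(\mathbb{P})}$ of the same shape.

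Applying Corollary~\ref{contractingItrSolBd} one last time then gives, for all $T\in(0,T_0]$ and $X\in\mathbb{R}^2$,
\[
(T+\eps^2)^{1/2}\,|u_\eps(T,X)-\tilde u_\eps(T,X)|_{L^2(\mathbb{P})}\leq C_3\exp\bigl(3C_2\,g(\eps)^{\gamma_2}\bigr)\exp\left\{-\frac{g(\eps)^2}{8e^{2|\mathfrak{m}|T_0+6L_1}}\right\},
\]
where $C_3$ depends only on $T_0$, $\mathfrak{m}$, $L_1$, $\ell_1$, and $\gamma_1$. The right-hand side is independent of $T$, and since $\gamma_2<2$ by the hypothesis $f_2\in\bar S_2'$, the quadratic decay in $g(\eps)^2$ dominates the sub-quadratic exponential growth in $g(\eps)^{\gamma_2}$, so the bound tends to $0$ as $\eps\to 0$ provided only that $g(\eps)\to\infty$, which is part of our standing assumption (\ref{gAsymp}). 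The case $T=0$ is immediate since $u_\eps(0,\cdot)=\tilde u_\eps(0,\cdot)=\eta_\eps(\cdot)$, so the supremum over $T\in[0,T_0]$ in (\ref{eq:approx-converges}) tends to zero.

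The key obstacle that this scheme must overcome is precisely the compounding of the factor $1+C_2 g(\eps)^{\gamma_2}\delta_\eps$ over the $M_\eps\asymp\delta_\eps^{-1}$ iterations, which produces the potentially large factor $e^{O(g(\eps)^{\gamma_2})}$; this can be beaten by the Gaussian-tail gain $e^{-g(\eps)^2/C_0}$ coming from Lemma~\ref{concentrationIneq} exactly when $\gamma_2<2$, which is why the restriction to $f\in\bar S'$ (rather than merely $f\in\bar S$) is imposed in Theorem~\ref{thm:main-theorem-McK-V}.
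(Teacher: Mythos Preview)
Your proposal is correct and follows essentially the same approach as the paper's proof: both start the induction at $A_0=0$ using the zero initial condition~(\ref{eq:sub-sup-ic}), iterate Corollary~\ref{contractingInductionStep} over the $M_\eps$ short intervals, telescope the resulting affine recursion (bounding it by $M_\eps\delta_\eps\cdot C_1\cdot r_\eps^{M_\eps}$), use $M_\eps\delta_\eps\le 3$ to control the compounded multiplicative factor, handle the remainder interval $[t_{M_\eps},T]$ with the shorter mesh $\delta_\eps^{\mathrm f}$, and finally invoke Corollary~\ref{contractingItrSolBd} together with $\gamma_2<2$ to conclude. Your closing remark explaining that $\gamma_2<2$ is exactly what allows the Gaussian tail factor $e^{-g(\eps)^2/C_0}$ to beat the accumulated growth $e^{O(g(\eps)^{\gamma_2})}$ is a nice articulation of why the restriction to $\bar S'$ enters here.
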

    \begin{proof}
        We see that the conditions of Corollary~\ref{contractingInductionStep} are satisfied for $m=0$ because $\overline{D_zU_\varepsilon}(0,\cdot)=0$. It follows by induction using Corollary~\ref{contractingInductionStep} that for sufficiently small $\eps$ depending only on $\mathfrak{m}$, $T_0$, $L_1$, $L_2$, $\ell_2$, $\gamma_2$, and our choice of the function $g$, the following inequality holds for all $0\leq m \leq M_\varepsilon$: 
           \begin{equation}
            \begin{split}
            \label{finalApproxErrorEst}
                |D_zU_\varepsilon(t_m,X)|_{L^2(\mathbb{P})}&\leq 
            A_{m}(\eps)\exp\left\{-\frac{g(\varepsilon)^2}{8e^{2|\mathfrak{m}|T_0+6L_1}}\right\}e^{\mathfrak{m}t_m}G_{t_m+\eps^2}(x-z),
            \end{split}
        \end{equation}
        where $A_0 = 0$ and $A_{m+1}(\varepsilon)$ is defined inductively by
        \begin{equation}
                A_{m+1}(\varepsilon) = e^{3L_1\delta_\varepsilon}\left(A_m(\varepsilon)+2\ell_1K_{4\gamma_1,T}^{1/4}\delta_\varepsilon\right)\left(1+4 \ell_2g(\varepsilon)^{\gamma_2}e^{9L_1+2|\mathfrak{m}|T_0}\delta_\varepsilon\right).
            \end{equation}
            In particular, we have
            \begin{equation}
                \begin{split}
                    A_{m+1}(\varepsilon) &\leq  2e^{3L_1m\delta_\varepsilon}\ell_1K_{4\gamma_1,T_0}^{1/4}\sum_{j=1}^m \left(1+4 \ell_2 g(\varepsilon)^{\gamma_2}e^{9L_1+2|\mathfrak{m}|T_0}\delta_\varepsilon\right)^j\delta_\varepsilon\\
                    &\leq 2e^{3L_1m\delta_\varepsilon}\ell_1K_{4\gamma_1,T_0}^{1/4}\left(1+4 \ell_2 g(\varepsilon)^{\gamma_2}e^{9L_1+2|\mathfrak{m}|T_0}\delta_\varepsilon\right)^m m\delta_\varepsilon.
                \end{split}
            \end{equation}
            Then because $M_\varepsilon \leq 3\delta_\varepsilon^{-1}$, we have 
            \begin{equation}
            \label{Eq:AmBd}
                  \begin{split}
                      A_{M_\varepsilon} &\leq  6e^{9L_1}\ell_1K_{4\gamma_1,T}^{1/4}\left(1+4 \ell_2 g(\varepsilon)^{\gamma_2}e^{9L_1+2|\mathfrak{m}|T}\delta_\varepsilon\right)^{3\delta_{\varepsilon}^{-1}}\\
                   &\leq C\exp\left\{12 \ell_2 e^{9L_1+2|\mathfrak{m}|T} g(\varepsilon)^{\gamma_2}\right\},
                  \end{split}
            \end{equation}
            where $C$ is a constant depending  on $K_{4\gamma_1,T_0}$, $\ell_1$, and $L_1$. Substituting this bound into (\ref{finalApproxErrorEst}) gives
            \begin{equation}
                |D_zU_\varepsilon(t_{M_\varepsilon},X)|_{L^2(\mathbb{P})}\leq C\exp\left\{12 \ell_2 e^{9L_1+2|\mathfrak{m}|T_0} g(\varepsilon)^{\gamma_2}-\frac{g(\varepsilon)^2}{8e^{2|\mathfrak{m}|T_0+6L_1}}\right\}e^{\mathfrak{m}t_{M_\eps}}G_{t_{M_\eps}+\eps^2}(x-z).
            \end{equation}
             To complete the proof, we need to apply one more iteration over the final interval (\ref{finalInt}) and then follow the proof of Corollary \ref{contractingItrSolBd} to obtain that for sufficiently small $\eps$ depending only on $\mathfrak{m}$, $T_0$, $L_1$, $L_2$, $\ell_2$, $\gamma_2$, and our choice of the function $g$, we have
           \begin{equation}
           \label{eq:final-bd-approx}
            (T+\eps^2)^{1/2}|U_\eps(T,X)|_{L^2(\mathbb{P})}\leq C'\exp\left\{12 \ell_2 e^{9L_1+2|\mathfrak{m}|T_0} g(\varepsilon)^{\gamma_2}-\frac{g(\varepsilon)^2}{8e^{2|\mathfrak{m}|T_0+6L_1}}\right\}.
             \end{equation}
             The argument is essentially the same as the previous iterations, where instead of $\delta_\varepsilon$, we use $\delta^{\mathrm{f}}_\varepsilon$ as in $(\ref{deltaF})$, so we omit the details. 
             The right hand side of (\ref{eq:final-bd-approx}) goes to $0$ as $\varepsilon \to 0$ because we assumed in (\ref{S2Bd}) that $\gamma_2<2$, and our choice of $T\in (0,T_0]$ was arbitrary, so this gives (\ref{eq:approx-converges}).
    \end{proof}
    
    We conclude this section by showing that $\tilde{v}_\varepsilon$ is also a good approximation for $v_\varepsilon$. Although $v_\eps$ is indeed a special case of $u_\eps$, the approximation (\ref{cutoffMckeanVlasov}) is not the corresponding special case of (\ref{eq:cutoffu}),
    so a separate proof is required. The argument will be similar to the one used to prove Lemma~\ref{approxConverges}, but the results of Section~\ref{ExistenceUniquenessSection} allow us to work at level of the initial value problem (\ref{epsilonIVP}), which simplifies the proof.
    \begin{lemma}
    \label{approxVConverges}
        For all $X\in \mathbb{R}^2$, we have 
        \begin{equation}
        \label{eq:approxVConverges}
            \adjustlimits\lim_{\varepsilon\to 0}\sup_{T\in[0,T_0]}(T+\eps^2)^{1/2}|v_\varepsilon(T,X)-\tilde{v}_\varepsilon(T,X)|_{L^2(\mathbb{P})} = 0.
        \end{equation}
    \end{lemma}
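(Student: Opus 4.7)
The plan is to reduce to a scalar ODE for $\delta\sigma_\eps := \sigma_\eps - \tilde\sigma_\eps$ and apply Gr\"onwall's inequality. By Lemma~\ref{McKeanVlasovExistence} applied to both $f$ and $\tilde f_\eps\in \bar S$ (see Lemma~\ref{approxNonlinearity}), we have $v_\eps(t,x) = \sigma_\eps(t) G^{\mathfrak m}_t*\eta_\eps(x)$ and $\tilde v_\eps(t,x) = \tilde\sigma_\eps(t) G^{\mathfrak m}_t*\eta_\eps(x)$, where $\tilde\sigma_\eps$ solves the analogue of (\ref{epsilonIVP}) with $\tilde f_\eps'$, and both functions satisfy $0\le\sigma_\eps,\tilde\sigma_\eps\le e^{3L_1}$ by the argument yielding (\ref{alphaEpsilonBound}). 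Since $|G^{\mathfrak m}_T*\eta_\eps(X)|_{L^2(\mathbb{P})} = e^{\mathfrak m T}(4\pi(T+\eps^2))^{-1/2}$, proving (\ref{eq:approxVConverges}) reduces to showing $\sup_{T\in [0,T_0]}|\delta\sigma_\eps(T)|\to 0$.

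Subtracting the two ODEs and setting $G = G^{\mathfrak m}_t*\eta_\eps(X)$, we obtain $\dot{\delta\sigma}_\eps = -A(t)\delta\sigma_\eps + r(t)$, where
\[
  A(t) = \frac{\mathbb{E}[f'(t+\eps^2,\sigma_\eps G)]}{\log\eps^{-1}},\qquad r(t) = -\frac{\tilde\sigma_\eps(t)}{\log\eps^{-1}}\mathbb{E}\bigl[f'(t+\eps^2,\sigma_\eps G) - \tilde f_\eps'(t+\eps^2,\tilde\sigma_\eps G)\bigr].
\]
Because $f_2'\ge 0$ by (\ref{eq:S2derivpos}) and $|f_1'|\le L_1/(t+\eps^2)$, we have $A(t)\ge -L_1/((t+\eps^2)\log\eps^{-1})$. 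Setting $\omega := |\delta\sigma_\eps|$, the differential inequality
\[
  \dot\omega \le -A(t)\omega + |r(t)|\le \frac{L_1}{(t+\eps^2)\log\eps^{-1}}\omega + |r(t)|
\]
and Gr\"onwall's inequality (with $\int_0^T L_1/((s+\eps^2)\log\eps^{-1})\,ds\le 3L_1$ for small $\eps$) yield
\[
  |\delta\sigma_\eps(T)|\le \frac{e^{6L_1}}{\log\eps^{-1}}\int_0^T \bigl|\mathbb{E}\bigl[f'(s+\eps^2,\sigma_\eps G) - \tilde f_\eps'(s+\eps^2,\tilde\sigma_\eps G)\bigr]\bigr|\,ds.
\]

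Next, decompose the integrand as $[f_2'(\sigma_\eps G) - \tilde f_{2,\eps}'(\sigma_\eps G)] + [\tilde f_\eps'(\sigma_\eps G) - \tilde f_\eps'(\tilde\sigma_\eps G)]$. The Lipschitz piece, controlled using $\tilde f_\eps\in \bar S_1(\tilde L_1(\eps),\tilde L_2(\eps))$ from (\ref{eq:ftildeinS1}) and $\mathbb{E}[|G|]\lesssim (s+\eps^2)^{-1/2}$, contributes at most $C\tilde L_2(\eps)|\delta\sigma_\eps(s)|/(s+\eps^2)$. The approximation piece vanishes where $|\sigma_\eps G|\le g(\eps)/\sqrt{s+\eps^2}$ by (\ref{eq:usually-agree}); Cauchy--Schwarz together with the polynomial bound on $f_2'$ from (\ref{S2Bd}), the uniform bound $|\tilde f_{2,\eps}'|\lesssim (1+g(\eps)^{\gamma_1})/(s+\eps^2)$ from the proof of Lemma~\ref{approxNonlinearity}, and a direct Gaussian tail estimate for the centered Gaussian $\sigma_\eps G$ (whose variance is $\lesssim 1/(s+\eps^2)$ uniformly in $\eps$) yields a pointwise bound of the form $C(1+g(\eps)^{\gamma_1})\exp(-cg(\eps)^2)/(s+\eps^2)$.

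Integrating in $s$ using $\int_0^T ds/(s+\eps^2)\le 2\log\eps^{-1}$ and applying Gr\"onwall's inequality to absorb the Lipschitz contribution, we arrive at
\[
  |\delta\sigma_\eps(T)|\le C(1+g(\eps)^{\gamma_1})\exp\bigl(C'\tilde L_2(\eps) - c g(\eps)^2\bigr).
\]
Since $\tilde L_2(\eps) = L_2 + \ell_2(1+g(\eps)^{\gamma_2})$ with $\gamma_2<2$ from the hypothesis $f_2\in \bar S_2'$, the term $-cg(\eps)^2$ dominates for $g(\eps)$ large, giving the desired convergence as $\eps\to 0$ under (\ref{gAsymp}). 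The main obstacle is precisely this competition between the super-exponentially small approximation error $\exp(-cg(\eps)^2)$ and the Gr\"onwall growth $\exp(C'g(\eps)^{\gamma_2})$, which the hypothesis $\gamma_2<2$ built into the definition of $\bar S_2'$ makes favorable.
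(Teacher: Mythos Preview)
Your proof is correct and follows essentially the same approach as the paper: both reduce to comparing the scalar ODEs for $\sigma_\eps$ and $\tilde\sigma_\eps$, decompose the difference of the drifts into a Lipschitz piece (controlled by $\tilde L_2(\eps)\sim g(\eps)^{\gamma_2}$) and a cutoff-error piece (controlled via Gaussian tails by $\exp(-cg(\eps)^2)$), and conclude by Gr\"onwall together with the hypothesis $\gamma_2<2$. The only cosmetic differences are that you exploit the sign of $f_2'$ to handle the linear part before splitting, whereas the paper bounds $\mathbb{E}[|f'(\sigma_\eps G)|]$ directly and collects all $|\delta\sigma_\eps|$-terms into a single Gr\"onwall step.
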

    \begin{proof}
        Let $\sigma_\eps$ and $\tilde{\sigma_\eps}$ be the unique solutions to 
        \begin{equation}
             \begin{cases}
            \dot{\sigma}_\eps(t) = -\frac{1}{\log\varepsilon^{-1}}\mathbb{E}[f'(t+\eps^2,\sigma_\eps(t)G^\mathfrak{m}_{t}*\eta_\eps(x))]\sigma_\eps(t);\\
            \sigma_\eps(0) = 1
        \end{cases}
        \end{equation}
        and
         \begin{equation}
             \begin{cases}
            \dot{\tilde{\sigma}}_\varepsilon(t) = -\frac{1}{\log\varepsilon^{-1}}\mathbb{E}[\tilde{f}_{\varepsilon}'(t+\eps^2,\tilde{\sigma}_\varepsilon(t)G_{t}^\mathfrak{m}*\eta_\eps(x))]\tilde{\sigma}_\varepsilon(t);\\
            \sigma_\eps(0) = 1,
        \end{cases}
        \end{equation}
        respectively. Then we have
        \begin{equation*}
            \begin{split}
                |\sigma_\eps(T)-\tilde{\sigma}_\varepsilon(T)| &\leq \frac{1}{\log\varepsilon^{-1}}\int_0^T \mathbb{E}[|f'(s+\eps^2,\sigma_\eps(s)G^{\mathfrak{m}}_{s}*\eta_\eps(x))|]|\sigma_\eps(s) - \tilde{\sigma}_\varepsilon(s)|\,ds\\
                &\qquad + \frac{1}{\log\varepsilon^{-1}}\int_0^T\mathbb{E}[|f'(s+\eps^2,\sigma_\eps(s)G^{\mathfrak{m}}_{s}*\eta_\eps(x))
                \\&\hspace{15em}-\tilde{f}_{\eps}'(s+\eps^2,\tilde{\sigma}_\varepsilon(s)G^{\mathfrak{m}}_{s}*\eta_\eps(x))|]|\tilde{\sigma}_\varepsilon(s)|\,ds\\
                &\leq \frac{C_1}{\log\varepsilon^{-1}}\int_0^T \frac{|\sigma_\eps(s)-\tilde{\sigma}_\varepsilon(s)|}{s+\varepsilon^2}\,ds\\
                &\qquad +\frac{e^{3L_1}}{\log\varepsilon^{-1}}\int_0^T\mathbb{E}[|\tilde{f}_{\eps}'(s+\eps^2,\sigma_\eps(s)G^{\mathfrak{m}}_{s}*\eta_\eps(x))-\tilde{f}_{\eps}'(s+\eps^2,\tilde{\sigma}_\varepsilon(s)G^{\mathfrak{m}}_{s}*\eta_\eps(x))|]\,ds\\
                &\qquad+ \frac{e^{3L_1}}{\log\varepsilon^{-1}}\int_0^T\mathbb{E}[|f'(s+\eps^2,\sigma_\eps(s)G^{\mathfrak{m}}_{s}*\eta_\eps(x))-\tilde{f}_{\eps}'(s+\eps^2,\sigma_\eps(s)G^{\mathfrak{m}}_{s}*\eta_\eps(x))|]\,ds,
            \end{split}
        \end{equation*}
        where $C_1$ is a constant depending only on $\mathfrak{m}$, $\ell_1$, $L_1$,$\gamma_1$, and $T_0$ and we applied (\ref{alphaEpsilonBound}) in the second inequality. Next, using an argument similar to (\ref{contractingItrBd1}--\ref{contractingItrBd2}), we obtain
        \begin{align*}
                |\sigma_\eps(T)-\tilde{\sigma}_\varepsilon(T)|
                &\leq \frac{C_1+C_2g(\varepsilon)^{\gamma_2}}{\log\varepsilon^{-1}}\int_0^T \frac{|\sigma_\eps(s)-\tilde{\sigma}_\varepsilon(s)|}{s+\varepsilon^2}\,ds\\
                &\qquad + \frac{C_3}{\log\varepsilon^{-1}}\int_0^T \frac{\mathbb{P}\left(\tilde{v}_\varepsilon(s,x) \geq \frac{g(\varepsilon)}{\sqrt{s+\varepsilon^2}}\right)^{1/2}}{s+\varepsilon^2}\,ds\\
                &\leq   6C_3 \exp\left\{-\frac{g(\varepsilon)^2}{4e^{{2|\mathfrak{m}|T_0}+6L_1}}\right\}+ \frac{C_1+C_2g(\varepsilon)^{\gamma_2}}{\log\varepsilon^{-1}}\int_0^T \frac{|\sigma_\eps(s)-\tilde{\sigma}_\varepsilon(s)|}{s+\varepsilon^2}\,ds,
        \end{align*}
        where $C_2$ depends only on $\mathfrak{m}$, $\ell_1$, $\ell_2$, $L_1$, $L_2$, $\gamma_1$, and $\gamma_2$, and $C_3$ depends only on $\mathfrak{m}$, $\ell_1$, $L_1$, and $\gamma_1$.
        By Gr\"onwall's inequality, this gives
        \begin{equation}
            |\sigma_\eps(T)-\tilde{\sigma}_\varepsilon(T)| \leq 6C_3\exp\left\{-\frac{g(\varepsilon)^2}{4e^{{2|\mathfrak{m}|T_0}+6L_1}} + 3(C_1+C_2g(\varepsilon)^{\gamma_2})\right\}.
        \end{equation}
        Our choice of $T\in (0,T_0]$ was arbitrary, so taking $\varepsilon\to 0$ and using our assumption that $\gamma_2<2$ gives (\ref{eq:approxVConverges}).
    \end{proof}
\subsection{Convergence to the McKean--Vlasov equation}
    \label{convergenceSection}
    Now that we have established that our approximations in Section~\ref{approximatingFnSection} converge to our corresponding solutions to (\ref{eq:u-f-eqn}) and (\ref{eq:McK-V}), the proof of Theorem~\ref{thm:main-theorem-McK-V} will follow as an immediate consequence of our work in Section~\ref{proof-lipschitz}. Indeed, we have
    \begin{align*}
      \adjustlimits\limsup_{\varepsilon\to 0} \sup_{T\in [0,T_0]}&(T+\eps^2)^{1/2}|u_\varepsilon(T,X)-v_\varepsilon(T,X)|_{L^2(\mathbb{P})}\\ &\leq \limsup\limits_{\varepsilon\to 0}  \sup_{T\in [0,T_0]}(T+\eps^2)^{1/2}|u_\varepsilon(T,X)-\tilde{u}_\varepsilon(T,X)|_{L^2(\mathbb{P})} 
      \\&\qquad + \adjustlimits\limsup_{\eps\to 0} \sup_{T\in [0,T_0]}(T+\eps^2)^{1/2}|\tilde{u}_\varepsilon(T,X)-\tilde{v}_\varepsilon(T,X)|_{L^2(\mathbb{P})}\\&\qquad+\adjustlimits\limsup_{\eps\to 0}\sup_{T\in [0,T_0]}(T+\eps^2)^{1/2} |\tilde{v}_\varepsilon(T,X)-v_\varepsilon(T,X)|_{L^2(\mathbb{P})}\\
                             &=     \adjustlimits\limsup_{\varepsilon\to 0}\sup_{T\in [0,T_0]}(T+\eps^2)^{1/2}|\tilde{u}_\varepsilon(T,X)-\tilde{v}_\varepsilon(T,X)|_{L^2(\mathbb{P})}
    \end{align*}by Lemmas~\ref{approxConverges} and~\ref{approxVConverges},
    so all that remains is to show that \[ \adjustlimits\lim_{\varepsilon\to 0}\sup_{T\in [0,T_0]}(T+\eps^2)^{1/2}|\tilde{u}_\varepsilon(T,X)-\tilde{v}_\varepsilon(T,X)|_{L^2(\mathbb{P})} = 0.\]Recall from Section~\ref{approximatingFnSection} that $\tilde{f}_\eps\in \bar{S}_1(\tilde{L}_1(\eps),\tilde{L}_2(\eps))$, where
    \begin{equation}
         \tilde{L}_1(\varepsilon) = L_1+\ell_1+\ell_1g(\varepsilon)^{\gamma_1}\quad\text{and}\quad \tilde{L}_2(\varepsilon) = L_2+\ell_2+\ell_2g(\varepsilon)^{\gamma_2}.
    \end{equation}
   The result will follow from Proposition \ref{lipSoltnBd} if we choose $g(\varepsilon)$ so that the $C^{1,1}$ constants $\tilde{L}_1$ and $\tilde{L}_2$, which now depend on the parameter $\varepsilon$, are sufficiently small as $\varepsilon \to 0$. To simplify notation, we take $\varepsilon$ sufficiently small so that 
     \begin{equation}
         \tilde{L}_1(\varepsilon) \leq 2\ell_1g(\varepsilon)^{\gamma_1}\quad\text{and}\quad \tilde{L}_2(\varepsilon)\leq 2\ell_2g(\varepsilon)^{\gamma_2}.
    \end{equation}

    Because of our assumption (\ref{gAsymp}) on the asymptotic behavior of $g$, we can take $\eps$ sufficiently small so that the conditions (\ref{eq:delta-eps-condition}) of Proposition \ref{lipSoltnBd} are satisfied. To further simplify notation, we also take $\eps$ sufficiently small so that
    \begin{equation}
        (1-(\tilde{L}_1(\eps)+e^{3\tilde{L}_1(\eps)}\tilde{L}_2(\eps))\delta_{\eps})^{-1}\leq 2.
    \end{equation}
    Then applying Proposition \ref{lipMainEstimate} yields
    \begin{multline}
            \sup_{T\in [0,T_0]}(T+\eps^2)^{1/2}|\tilde{u}_\varepsilon(T,X)-\tilde{v}_\varepsilon(T,X)|_{L^2(\mathbb{P})}\\\leq  \frac{2\ell_2C_{T_0,\mathfrak{m}}g(\eps)^{\gamma_2}}{(\log\varepsilon^{-1})^{1/4}}\exp\left\{36\ell_1g(\varepsilon)^{\gamma_1}+6 e^{12\ell_1g(\varepsilon)^{\gamma_1}+|\mathfrak{m}|T_0}\ell_2g(\varepsilon)^{\gamma_2}\right\}.
        \end{multline}
        Next, taking $\varepsilon$ sufficiently small depending  on $\mathfrak{m}$, $T_0$, $\ell_1$, $\ell_2$, $\gamma_1$, and $\gamma_2$  gives
        \begin{equation}
                2\ell_2C_{T_0,\mathfrak{m}}g(\eps)^{\gamma_2}\exp\left\{36\ell_1g(\varepsilon)^{\gamma_1}+6 e^{12\ell_1g(\varepsilon)^{\gamma_1}+|\mathfrak{m}|T_0}\ell_2g(\varepsilon)^{\gamma_2}\right\}
                \leq \exp\left\{e^{14\ell_1g(\varepsilon)^{\gamma_1}}\right\}.
        \end{equation}
        Now, choosing 
        \begin{equation}
            g(\varepsilon) = \left(\frac{\log\log\log\log(\varepsilon^{-1})}{14\ell_1}\right)^{1/\gamma_1},
        \end{equation}
        which satisfies assumption (\ref{gAsymp}), gives
        \begin{equation}
            \sup_{T\in[0,T_0]}(T+\eps^2)^{1/2}|\tilde{u}_\varepsilon(T,X)-\tilde{v}_\varepsilon(T,X)|_{L^2(\mathbb{P})}\leq \frac{\log\log\varepsilon^{-1}}{(\log\varepsilon^{-1})^{1/4}}.
        \end{equation}
        Taking $\varepsilon\to 0$ completes the proof of Theorem \ref{thm:main-theorem-McK-V}.
\printbibliography

\end{document}